\documentclass[11pt]{article}

\usepackage[margin=3cm]{geometry}

\usepackage{mathrsfs}
\usepackage{amsmath}
\usepackage{amsthm}
\usepackage{color}
\usepackage{amsfonts,amssymb}
\usepackage{graphicx}

\theoremstyle{plain}
\newtheorem{Theorem}{Theorem}[section]
\newtheorem{Corollary}[Theorem]{Corollary}
\newtheorem{Lemma}[Theorem]{Lemma}
\newtheorem{Proposition}[Theorem]{Proposition}
\newtheorem{Conjecture}[Theorem]{Conjecture}
\newtheorem{Claim}[Theorem]{Claim}
\newtheorem{Fact}[Theorem]{Fact}

\theoremstyle{definition}

\renewcommand{\P}{\mathbb{P}}
\newcommand{\E}{\mathbb{E}}

\newcommand{\G}{\mathcal{G}}

\newcommand{\V}{\mathrm{V}_N}
\newcommand{\D}{\mathcal{D}_R}
\newcommand{\eps}{\varepsilon}

\numberwithin{equation}{section}

\title{Clustering and the hyperbolic geometry of complex networks\footnote{An extended abstract of this paper appeared in the
Proceedings of the 11th Workshop on Algorithms and Models for the Web Graph (WAW '14).}}

\author{Elisabetta Candellero\footnote{Department of Statistics, University of Warwick, Coventry CV4 7AL, United Kingdom} 
\and {Nikolaos Fountoulakis\footnote{School of Mathematics, University of Birmingham, Edgbaston B15 2TT, United Kindgom. 
Research supported by a Marie Curie Career Integration
Grant PCIG09-GA2011-293619.}}}

\begin{document}
\maketitle

\begin{abstract}
Clustering is a fundamental property of complex networks and it is the mathematical expression of a ubiquitous phenomenon that arises
in various types of self-organized networks such as biological networks, computer networks or social networks. In this paper, we consider
what is called the \emph{global clustering coefficient} of random graphs on the hyperbolic plane. This model of random graphs was proposed
recently by Krioukov et al.~\cite{ar:Krioukov} as a mathematical model of complex networks, under the fundamental assumption that hyperbolic
geometry underlies the structure of these networks. We give a rigorous analysis of clustering and characterize the global clustering
coefficient in terms of the parameters of the model. We show how the global clustering coefficient can be tuned by these parameters
and we give an explicit formula for this function.
\end{abstract}




\tableofcontents

\section{Introduction}

The theory of complex networks was developed during the last 15 years mainly as a unifying mathematical framework for modeling
a variety of networks such as biological networks or large computer networks among which is the Internet, the World Wide Web as well as
social networks that have been recently developed over these platforms. A number of mathematical models have emerged whose aim is to
describe fundamental characteristics of these networks as these have been described by experimental evidence -- see for
example~\cite{ar:StatMechs}. Loosely speaking, the notion of a complex network refers to a class of large networks which exhibit the
following characteristics:
\begin{enumerate}
\item[1.] they are \emph{sparse}, that is, the number of their edges is proportional to the number of nodes; 
\item[2.] they exhibit the \emph{small world phenomenon}: most pairs of vertices which belong to the same component 
are within a short distance from each other;
\item[3.] \emph{clustering}: two nodes of the network that have a common 
neighbour are somewhat more likely to be connected with each other;  
\item[4.] the tail of their degree distribution follows a \emph{power law}. In particular, experimental evidence 
(see~\cite{ar:StatMechs}) indicates that many networks that emerge in applications follow power law degree distribution with exponent between 
2 and 3. 
\end{enumerate}
The books of Chung and Lu~\cite{ChungLuBook+} and of Dorogovtsev~\cite{Dor} are excellent references for a detailed discussion of these
properties. 

Among the most influential models was the Watts-Strogatz model of small worlds~\cite{ar:WatStrog98}
and the Barab\'asi-Albert model~\cite{ar:BarAlb}, that is also known as the preferential attachment model. The main typical characteristics
of these networks have to do with the distribution of the degrees (e.g., power-law distribution), the existence of clustering as well as the
typical distances between vertices (e.g., the small world effect).
These models as well as other known models, such as the Chung-Lu model (defined by Chung and Lu~\cite{ChungLu1+},
\cite{ChungLuComp+}) fail to capture \emph{all} the above features simultaneously or if they do so, they do it in a way that is difficult 
to tune these features independently. 
For example, the Barab\'asi-Albert model does exhibit a power law 
degree distribution, with exponent between 2 and 3, and average distance of order $O(\log \log N)$, when it is suitably parameterized. 
However, it is locally tree-like around a typical vertex (cf.~\cite{ar:BolRiordan},~\cite{ar:EggNoble}). 
On the other hand, the Watts-Strogatz model, although it exhibits clustering and small distances between the vertices, has degree 
distribution that decays exponentially~\cite{ar:BarrWeigt2000}.

The notion of clustering formalizes the property that two nodes of a network that share
a neighbor (for example two individuals that have a common friend) are more likely to  be joined by an edge (that is, to be friends of
each other). In the context of social networks, sociologists
have explained this phenomenon through the notion of \emph{homophily}, which refers to the tendency of individuals to be related with
similar individuals, e.g.\ having similar socioeconomic background or similar educational background. 
There have been numerous attempts to define models where clustering is present -- see for example the work of Coupechoux and 
Lelarge~\cite{ar:LelCoup} or that of Bollob\'as, Janson and Riordan~\cite{ar:BollJanRior} where this is combined with the general notion 
of inhomogeneity. In that context, clustering is \emph{planted} in a sparse random graph. 
Also, it is even more rare to quantify clustering precisely (see for example the work of~\cite{ar:Bloznelis2013} on random intersection 
graphs). This is the case as the presence of clustering is the outcome of heavy dependencies between the edges of the random graphs and, 
in general, these are not easy to handle. 

However, clustering is naturally present on random graphs that are created on a metric space, as is the case of a random geometric graph 
on the Euclidean plane.
The theory of random geometric graphs was initiated by Gilbert~\cite{Gilbert61} already in 1961 (see also~\cite{bk:MeesterRoy}) and
started taking its present form later by Hafner~\cite{ar:Hafner72}.
In its standard form a geometric random graph is created as follows: $N$ points are sampled within a subset of $\mathbb{R}^d$
following a particular distribution (most usually this is the uniform distribution or the distribution of the point-set of a Poisson
point process) and any two of them are joined when their Euclidean distance is smaller than some threshold value which, in general, is
a function of $N$. During the last two decades, this kind of random graphs was studied in depth by several researchers -- see
the monograph of Penrose~\cite{bk:Penrose} and the references therein.
Numerous typical properties of such random graphs have been investigated, such as the chromatic number~\cite{McDiarmid}, 
Hamiltonicity~\cite{Balogh} etc.

There is no particular reason why a random geometric graph on a Euclidean space would be intrinsically associated with the 
formation of a complex network. 
Real-world networks consist of heterogeneous nodes, which can be classified into groups. In turn, these groups
can be classified into larger groups which belong to bigger subgroups and so on. For example, if we consider the network of citations, 
whose set of nodes is the set of research papers and there is a link from one paper to another if one cites the other, there is a 
natural classification of the nodes according to the scientific fields each paper belongs to (see for example~\cite{Boerner04+}). In the
case of the network of web pages, a similar classification can be considered in terms of the similarity between two web pages. 
That is, the more similar two web pages are, the more likely it is that there exists a hyperlink between them (see~\cite{Mencezer02+}).  

This classification can be approximated by tree-like structures representing the hidden hierarchy of the network. 
The tree-likeness suggests that in fact the geometry of this hierarchy is \emph{hyperbolic}. 
One of the basic features of a hyperbolic space is that the volume growth is exponential which is also the case, for example, 
when one considers a $k$-ary tree, that is, a rooted tree where every vertex has $k$ children. 
Let us consider for example the Poincar\'e unit disc model (which we will discuss in more detail in the next section). 
If we place the root of an infinite $k$-ary tree at the centre of 
the disc, then the hyperbolic metric provides the necessary room to embed the tree into the disc so that every edge has unit length
in the embedding. 

Recently Krioukov et al.~\cite{ar:Krioukov} introduced a model which implements this idea. 
In this model, a random network is created on the hyperbolic plane (we will see the detailed definition shortly). In particular, 
Krioukov et al.~\cite{ar:Krioukov} determined the degree distribution for \emph{large} degrees showing that it is \emph{scale free} 
and its tail follows a power law, whose exponent is determined by some of the parameters of the model. 
Furthermore, they consider the clustering properties of the resulting random network. A numerical approach in~\cite{ar:Krioukov} suggests
that the (local) clustering coefficient\footnote{This is defined as the average density of the neighbourhoods of the vertices} is
positive and it is determined by one of the parameters of the model. In fact, as we will discuss in Section~\ref{sec:geom_asp}, 
this model corresponds to the sparse regime of random geometric graphs on the hyperbolic plane and hence is of independent interest 
within the theory of random geometric graphs. 

This paper investigates \emph{rigorously} the presence of clustering in this model, through the notion of the \emph{clustering
coefficient}. Our first contribution is that we manage to determine exactly the value of the clustering coefficient as a function of the
parameters of the model. More importantly, our results imply that in fact the exponent of the power law, the density of the random graph 
and the amount of clustering can be tuned \emph{independently} of each other, through the parameters of the random graph.  
Furthermore, we should point out that the clustering coefficient we consider is the so-called \emph{global clustering coefficient}.
Its calculation involves tight concentration bounds on the number of triangles in the random graph. Hence, our analysis initiates an
approach to the small subgraph counting problem in these random graphs, which is among the central problems in the 
general theory of random graphs~\cite{Bollobas},\cite{JLR} and of random geometric graphs~\cite{bk:Penrose}. 

We now proceed with the definition of the model of random geometric graphs on
the hyperbolic plane.

\subsection{Random geometric graphs on the hyperbolic plane}
The most common representations of the hyperbolic plane are the upper-half plane representation $\{z \ : \ \Im z > 0 \}$ as
well as the Poincar\'e unit disc which is simply the open disc of radius one, that is, $\{(u,v) \in \mathbb{R}^2 \ : \ 1-u^2-v^2 > 0 \}$.
Both spaces are equipped with the hyperbolic metric; in the former
case this is $\frac{dx^2+ dy^2} {(\zeta y)^2}$ whereas in the latter this is ${4\over \zeta^2}~{du^2 + dv^2\over (1-u^2-v^2)^2}$, where
$\zeta$ is some positive real number.
It can be shown that the (Gaussian) curvature in both cases is equal to $-\zeta^2$ and the two spaces are isometric, i.e., there
is a bijection between the two spaces that preserves (hyperbolic) distances. In fact, there are more representations of the 2-dimensional
hyperbolic space of curvature $-\zeta^2$ which are isometrically equivalent to the above two. We will denote by $\mathbb{H}^2_\zeta$ the
class of these spaces.

In this paper, following the definitions in~\cite{ar:Krioukov}, we shall be using the \emph{native} representation of $\mathbb{H}^2_\zeta$.
Under this representation, the ground space of $\mathbb{H}^2_{\zeta}$ is $\mathbb{R}^2$ and every point $x \in \mathbb{R}^2$ whose
polar coordinates are $(r,\theta)$ has hyperbolic distance from the origin equal to $r$. 
More precisely, the native representation can be viewed as a mapping of the Poincar\'e unit disc to $\mathbb{R}^2$, where the origin 
of the unit disc is mapped to the origin of $\mathbb{R}^2$ and every point $v$ in the Poincar\'e disc is mapped to a point  
$v'\in \mathbb{R}^2$, where $v' = (r,\theta)$ in polar coordinates. 
More specifically, $r$ is the hyperbolic distance of $v$ from the origin of the Poincar\'e disc and $\theta$ is its angle.

Also, a circle of radius $r$ around the origin
has length equal to ${2\pi\over \zeta} \sinh \zeta r$ and area equal to ${2\pi \over \zeta^2} (\cosh \zeta r - 1)$.

We are now ready to give the definitions of the two basic models introduced in~\cite{ar:Krioukov}.
Consider the native representation of the hyperbolic plane of curvature $K = - \zeta^2$, for some $\zeta > 0$.
For some constant $\nu >0$, we let $N= \nu e^{\zeta R/2}$ -- thus $R$ grows logarithmically as a function of $N$. 
We shall explain the role of $\nu$ shortly. 
{Let $V_N = \{ v_1,\ldots, v_N \}$ be the set of vertices of the random graphs, where $v_i$ is a random point 
on the disc of radius $R$ centered at the origin $O$, which we denote by $\D$. In other words, the vertex set $V_N$ 
is a set of \emph{i.i.d.} random variables that take values on $\D$. }

Their distribution is as follows.  Assume that $u$ has
polar coordinates $(r, \theta)$. The angle $\theta$ is uniformly distributed in $(0,2\pi]$ and  the probability density function of
$r$, which we denote by $\rho_N (r)$, is determined by a parameter $\alpha >0$ and is equal to
\begin{equation} \label{eq:pdf}
 \rho_N (r) = \begin{cases}
\alpha {\sinh  \alpha r \over \cosh \alpha R - 1}, & \mbox{if $0\leq r \leq R$} \\
0, & \mbox{otherwise}
\end{cases}.
\end{equation}
Note that when $\alpha = \zeta$, this is simply the uniform distribution.

An alternative way to define this distribution is as follows. Consider $\mathbb{H}^2_{\alpha}$ and select arbitrarily a point $O'$ 
on this space, which will be assumed to be its origin. Consider the disc $\D'$ of radius $R$ around $O'$ and select $N$ points 
within $\D'$ uniformly at random. Subsequently, the selected points are projected onto $\D$ preserving their polar coordinates.   
The projections of these points, which we will be denoting by $\V$, will be the vertex set of the random graph.
We will be also treating the vertices as points in the hyperbolic space indistinguishably.

{  More precisely, 
given $\alpha, \zeta>0$, we consider two different Poincar\'e disk representations using two different metrics: one being $(ds)^2:={4\over \zeta^2}~{du^2 + dv^2\over (1-u^2-v^2)^2}$, and the other $(ds')^2:={4\over \alpha^2}~{du^2 + dv^2\over (1-u^2-v^2)^2}$.
Subsequently, we consider the disk around the origin with (hyperbolic) radius $R$ in the metric $(ds')^2$.
There, we select the $N=\nu e^{\zeta R/2}$ vertices of $V_N$ uniformly at random.
Thereafter, we project these vertices onto the disk of  (hyperbolic) radius $R$ under the metric $(ds)^2$ on which we create 
the random graph. }

Note that the curvature in this case determines the rate of growth of the space. Hence, when $\alpha < \zeta$, the $N$ points 
are distributed on a disc (namely $\D'$) which has smaller area compared to $\D$. This naturally increases the density of those points 
that are located closer to the origin. Similarly, when $\alpha > \zeta$ the area of the disc $\D'$ is larger than that of $\D$, and most of the 
$N$ points are significantly more likely to be located near the boundary of $\D'$, due to the exponential growth of the volume. 

Given the set $\V$ on $\D$ we define the following two models of random graphs. 
\begin{enumerate}
\item[1.] \emph{The disc model}:
this model is the most commonly studied in the theory of random geometric graphs on Euclidean spaces.
We join two vertices if they are within (hyperbolic) distance $R$ from each other.
Figure~\ref{fig:disc} illustrates a disc of radius $R$ around a vertex $v \in \D$. 
\begin{figure}[htp] 
 \centering
 \includegraphics[scale=0.4]{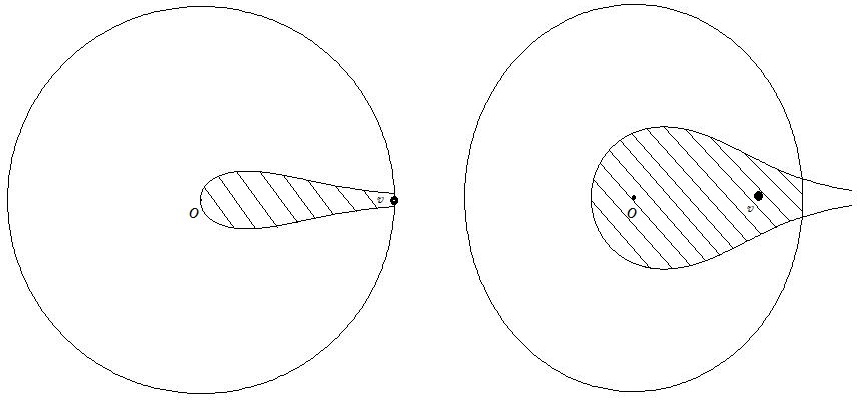}
 \caption{The disc of radius $R$ around $v$ in $\D$}
 \label{fig:disc}
 \end{figure}

\item[2.] \emph{The binomial model}:
we join any two distinct vertices $u, v$ with probability
$$ p_{u,v} = {1\over \exp\left(\beta ~ {\zeta \over 2}(d(u,v)- R)\right) + 1},$$
independently of every other pair, where $\beta >0$ is fixed and $d(u,v)$ is the hyperbolic distance between $u$ and $v$.
We denote the resulting random graph by $\G (N;\zeta, \alpha, \beta,\nu)$.
\end{enumerate}
The binomial model is in some sense a \emph{soft} version of the disc model. In the latter, two vertices become adjacent 
if and only if their hyperbolic distance is at most $R$. This is \emph{approximately} the case in the former model. 
If $d(u,v)= (1+\delta) R$, where $\delta >0$ is some constant, then $p_{u,v} \rightarrow 0$, whereas if $d(u,v) = (1-\delta) R$, then
$p_{u,v} \rightarrow 1$, as $N \rightarrow \infty$. 
{(Recall that $R\to \infty$ as $N\to \infty$.)}

Also, the disc model can be viewed as a limiting case of the binomial model as 
$\beta \rightarrow \infty$. Assume that the positions 
of the vertices in $\D$ have been realized. If $u,v \in V_N$ are such that $d(u,v) < R$, then when $\beta \rightarrow \infty$ the 
probability that $u$ and $v$ are adjacent tends to 1; however, if $d(u,v)>R$, then this probability converges to 0 as $\beta$ grows. 

The binomial model has also a statistical-mechanical interpretation where the parameter $\beta$ can be seen as the inverse temperature of
a fermionic system where particles correspond to the edges of the random graph -- see~\cite{ar:Krioukov} for a detailed discussion.  

Krioukov et al.~\cite{ar:Krioukov} provide an argument which indicates that in both models the degree distribution has a power law 
tail with exponent that is equal to $2\alpha /\zeta +1$. Hence, when $0 < \zeta /\alpha < 2$, any exponent greater than $2$ can 
be realised. This has been shown rigorously by Gugelmann et al.~\cite{ar:Kosta}, for the disc model, and by the second 
author~\cite{ar:Foun13+}, for the binomial model. In the latter case, the average degree of a vertex depends on all four parameters 
of the model. For the disc model in particular, having fixed $\zeta$ and $\alpha$, which determine the exponent of the power law, the 
parameter $\nu$ determines the average degree. In the binomial model, there is an additional dependence on $\beta$.  
However, our main results show that clustering \emph{does not depend} on $\nu$. Therefore, in the binomial model the ``amount" of
clustering and the average degree can be tuned \emph{independently}. 

The second author has shown~\cite{ar:Foun13+} that $\beta =1$ is a critical point around which there is a transition on the 
density of $\G (N;\zeta, \alpha, \beta,\nu)$. 
When $\beta \leq 1$, the average degree of the random graph grows at least logarithmically in $N$. 
The case where $\beta > 1$ and $0 < \zeta /\alpha < 2$ becomes of particular interest as the random graph obtains the characteristics 
that are ascribed to complex networks.
More specifically, in this regime  $\G (N; \zeta, \alpha, \beta,\nu)$ has constant (i.e., not
depending on $N$) average degree that depends on $\nu, \zeta, \alpha$ and $\beta$, whereas the degree distribution follows the tail of
a power law with exponent $2\alpha / \zeta +1$. {Theorem 1.2 in~\cite{ar:Foun13+} implies that the average degree is 
proportional to $\nu^{2\alpha /\zeta} f(\zeta, \alpha, \beta)$. 
In this paper, we show that the (global) clustering coefficient is only a function of $\zeta, \alpha$ and $\beta$. }

\subsection{Notation}
Let $\{ X_N \}_{N \in \mathbb{N}}$ be a sequence of real-valued random variables on a sequence of probability spaces
$\{ (\Omega_N, \mathbb{P}_N)\}_{N \in \mathbb{N}}$, and let $\{ a_N \}_{N \in \mathbb{N}}$ be a sequence of real numbers that tends to infinity as $N \rightarrow \infty$.

We write $X_N = o_p (a_N)$, if $|X_N|/a_N$ \emph{converges to 0 in probability}.
That is, for any $\eps >0$, we have $\mathbb{P}_N (|X_N /a_N|>\eps) \rightarrow 0$ as $N \rightarrow \infty$.
Additionally, we write $X_N = \Theta_C (a_N)$ if there exist positive real numbers $C_1 , C_2$ such that
we have $\mathbb{P} (C_1 a_N \leq |X_N| \leq C_2 a_N ) = 1- o(1)$.
Finally, if $\mathcal{E}_N$ is a measurable subset of $\Omega_N$, for any $N \in \mathbb{N}$, we say that the sequence
$\{ \mathcal{E}_N \}_{N \in \mathbb{N}}$ occurs \emph{asymptotically almost surely (a.a.s.)} if $\mathbb{P} (\mathcal{E}_N) = 1-o(1)$,
as $N\rightarrow \infty$. However, with a slight abuse of terminology, we will be saying that an \emph{event occurs a.a.s.} implicitly referring
to a sequence of events.

For two functions $f,g : \mathbb{N} \rightarrow \mathbb{R}$ we write $f(N) \ll g(N)$ if $f(N)/g(N) \rightarrow 0$ as $N \rightarrow \infty$.
Similarly, we will write $f(N)\asymp g(N)$, meaning that there are positive constants $c_1, c_2$ such that for all $N\in \mathbb{N}$
we have $c_1 g(N) \leq f(N)\leq c_2 g(N)$.
Analogously, we write $f(N)\lesssim g(N)$ (resp.\ $f(N)\gtrsim g(N)$) if there is a positive constant $c$ such that for all $N\in \mathbb{N}$ 
we have $f(N)\leq cg(N) $ (resp.\ $f(N)\geq cg(N) $). These notions could have been expressed through the standard Landau notation, but
we chose to express them as above in order to make our calculations more readable.

Finally, we write $X_N \lesssim g(N)$ \emph{a.a.s.}, if there is a positive constant $c$ such that $X_N \leq c g(N)$ a.a.s.
An analogous interpretation is used for $X_N \gtrsim g(N)$ \emph{a.a.s.}

\subsection{Some geometric aspects of the two models} \label{sec:geom_asp}

The disc model on the hyperbolic plane can be also viewed within the framework of random geometric graphs. Within this 
framework, the disc model may be defined for \emph{any} threshold distance $r_N$ and not merely for threshold distance equal to $R$. 
However, only taking $r_N = R$ yields a random graph with constant average degree that is bounded away from 0. 
More specifically for any $\delta \in (0,1)$, if $r_N = (1-\delta ) R$, then the resulting 
random graph becomes rather trivial and most vertices have no neighbours. On the other hand, if $r_N = (1+\delta )R$, the resulting 
random graph becomes too dense and its average degree grows polynomially fast in $N$.

The proof of these observations relies on the following lemma which provides a characterization of what it means for two points $u,v$ to
have $d(u,v) \leq (1+\delta) R$, for $\delta \in (-1,1)$, in terms of their \emph{relative angle}, which we denote by $\theta_{u,v}$. 
For this lemma, we need the notion of the \emph{type} of a vertex. 
For a vertex $v \in \V$, if $r_v$ is the distance of  $v$ from the origin, that is, the radius of $v$, then we set $t_v = R - r_v$ -- 
we call this quantity the \emph{type} of vertex $v$. 
\begin{Lemma}\label{lem:relAngle}
Let $\delta \in (-1,1) $ be a real number.
For any $\eps>0$ there exists an $N_0>0$ and a $c_0>0$ such that for any $N>N_0$ and $u,v\in\D$ with $t_u+t_v < (1-|\delta |) R-c_0$
the following hold.
\begin{itemize}
			\item If $\theta_{u,v} < 2 (1-\eps )\exp\left(\frac{\zeta}{2}(t_u+t_v-(1-\delta)R)\right)$, then $d(u,v) < (1+\delta ) R$.
			\item If $\theta_{u,v}> 2(1+\eps )\exp\left(\frac{\zeta}{2}(t_u+t_v-(1-\delta ) R)\right)$,
				then $d(u,v) > (1+\delta ) R$.
		\end{itemize}
\end{Lemma}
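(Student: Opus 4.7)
The plan is to convert the metric condition $d(u,v) = (1+\delta)R$ into an explicit threshold for the relative angle via the \emph{hyperbolic law of cosines}, and then read off the two inequalities by monotonicity. Writing $r_u = R - t_u$ and $r_v = R - t_v$ for the radial coordinates, the law of cosines applied to the triangle $Ouv$ reads
\begin{equation*}
\cosh(\zeta d(u,v)) = \cosh(\zeta r_u)\cosh(\zeta r_v) - \sinh(\zeta r_u)\sinh(\zeta r_v)\cos\theta_{u,v}.
\end{equation*}
Using $1-\cos\theta = 2\sin^2(\theta/2)$ together with the identity $\cosh a\cosh b - \sinh a\sinh b = \cosh(a-b)$, this rearranges to
\begin{equation*}
2\sin^2(\theta_{u,v}/2) = \frac{\cosh(\zeta d(u,v)) - \cosh(\zeta(r_u-r_v))}{\sinh(\zeta r_u)\sinh(\zeta r_v)}.
\end{equation*}
The right-hand side is strictly increasing in $d(u,v)$ and $\sin^2(\cdot/2)$ is strictly increasing on $(0,\pi)$, so $\theta_{u,v}$ is a strictly increasing function of $d(u,v)$ for fixed $r_u, r_v$; comparing $d(u,v)$ with $(1+\delta)R$ therefore reduces to comparing $\theta_{u,v}$ with the unique critical angle $\theta^{*}$ obtained by substituting $d=(1+\delta)R$.

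I then show $\theta^{*} = 2\bigl(1 + O(e^{-\zeta c_0/2})\bigr)\exp\!\bigl(\tfrac{\zeta}{2}(t_u+t_v-(1-\delta)R)\bigr)$, uniformly over all admissible pairs $u,v$. Using the sharp asymptotics $\cosh x = \tfrac12 e^{x}(1 + O(e^{-2x}))$ and $\sinh x = \tfrac12 e^{x}(1+O(e^{-2x}))$ as $x \to \infty$, and noting that the hypothesis $t_u+t_v < (1-|\delta|)R - c_0$ forces $r_u, r_v \geq c_0$, both sinh factors in the denominator are controlled with relative error $O(e^{-2\zeta c_0})$. The same hypothesis gives $|r_u-r_v| \leq t_u+t_v < (1+\delta)R - c_0$, so $\cosh(\zeta(r_u-r_v))$ is negligible compared with $\cosh(\zeta(1+\delta)R)$ by a factor of $O(e^{-\zeta c_0})$. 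Substituting $d = (1+\delta)R$ and simplifying yields
\begin{equation*}
\sin^2(\theta^{*}/2) = \bigl(1 + O(e^{-\zeta c_0})\bigr)\exp\!\bigl(\zeta(t_u+t_v-(1-\delta)R)\bigr).
\end{equation*}

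Since this right-hand side is itself bounded above by a constant times $e^{-\zeta c_0}$ under the hypothesis (which can be checked for either sign of $\delta$), the angle $\theta^{*}/2$ is $O(e^{-\zeta c_0/2})$, and the Taylor expansion $\sin x = x(1 + O(x^2))$ converts the left-hand side to $(\theta^{*}/2)^2$ with a further multiplicative error of size $O(e^{-\zeta c_0})$. Extracting square roots and choosing $c_0$ sufficiently large (depending only on $\eps$, $\zeta$ and $\delta$) forces the cumulative relative error to lie in $[1-\eps, 1+\eps]$, so that $\theta^{*}$ lies strictly between $2(1-\eps)\exp(\cdot)$ and $2(1+\eps)\exp(\cdot)$. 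Monotonicity of $d(u,v)$ in $\theta_{u,v}$ then delivers both bullet points in the statement.

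The main technical burden is collecting the three independent sources of error --- the sinh/cosh asymptotics in the denominator, the $\cosh(\zeta(r_u-r_v))$ correction in the numerator, and the linearization of $\sin$ --- into one \emph{uniform} multiplicative factor $1 + O(e^{-\zeta c_0})$ that is independent of the particular pair $(u,v)$. The gap $c_0$ in the hypothesis is precisely what enables this: each of the three error contributions decays geometrically in $c_0$, so a single choice of $c_0$ depending only on $\eps$ and the fixed geometric parameters bounds all three simultaneously.
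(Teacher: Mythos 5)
Your proof is correct and rests on the same hyperbolic law of cosines as the paper's argument in Appendix~\ref{app:B}, with the same hypothesis $t_u+t_v < (1-|\delta|)R - c_0$ doing the same three jobs (controlling the $\sinh$ asymptotics, absorbing the $\cosh(\zeta(r_u-r_v))$ term, and justifying the linearization of $\sin$). The one genuine organizational difference: rather than carrying out two separate one-sided estimates on $\cosh(\zeta d(u,v))$ as the paper does, you use the identity $\cosh a\cosh b - \sinh a\sinh b = \cosh(a-b)$ to solve for $\sin^2(\theta_{u,v}/2)$ as a strictly increasing function of $d(u,v)$, locate the critical angle $\theta^*$ at $d=(1+\delta)R$, and read off both bullets at once by monotonicity. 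This is a cleaner bookkeeping device that avoids duplicating the estimate, and it also makes transparent that $\theta^*$ actually exists in $(0,\pi)$ (because $|r_u-r_v| < (1+\delta)R < r_u + r_v$ under the hypothesis), a point the paper's version leaves implicit.
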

The proof of this lemma can be found in Appendix~\ref{app:B}. 

Let us consider temporarily the (modified) disc model, where we assume that two vertices are joined precisely 
when their hyperbolic distance is at most $(1+ \delta ) R$.   
Let $u \in \V$ be a vertex and assume that $t_u < C$ (in fact, by Claim~\ref{clm:density_approx}, if $C$ is large enough, then most
vertices will satisfy this). 
We will show that if $\delta <0$, then the expected degree of $u$, in fact, tends to 0. 
Let us consider a simple case where $\delta$ satisfies $\frac{\zeta}{2 \alpha} < 1-|\delta | <1$. 
As we will see later (Corollary~\ref{cor:x0}), a.a.s. there are no vertices of 
type much larger than ${\zeta \over 2\alpha} R$.  Hence, since $t_u < C$, if $N$ is sufficiently large, then we have 
${\zeta \over 2\alpha} R < (1-|\delta | ) R - t_u - c_0$. 
By Lemma~\ref{lem:relAngle}, the probability that a vertex $v$ has type at most ${\zeta \over 2 \alpha} R$ and it is adjacent to $u$ 
(that is, its hyperbolic distance from $u$ is at most $(1+\delta ) R$) is proportional to
$e^{{\zeta \over 2}(t_u+t_v - (1-\delta)R)}/\pi$.
If we average this over $t_v$, and denote by $u\sim v$ the relation of neighborhood between the two vertices $u$ and $v$, we obtain
\begin{equation*}
\begin{split}
\mathrm{Pr} \left[ u \sim v | t_u \right]
&\asymp {e^{\zeta t_u /2} \over e^{{\zeta \over 2}(1- \delta ) R}}  \int_{0}^{{\zeta \over 2 \alpha} R} e^{\zeta t_v /2} {\alpha \sinh (\alpha (R-t_v)) \over \cosh (\alpha R) -1} dt_v \\
& \lesssim
{e^{\zeta t_u /2} \over e^{{\zeta \over 2}(1- \delta ) R}} \int_0^R e^{\zeta t_v /2}~ {e^{\alpha (R-t_v)} \over 
\cosh (\alpha R) - 1} dt_v \\
& \asymp {e^{\zeta t_u /2} \over e^{{\zeta \over 2}(1- \delta ) R}} \int_0^R e^{(\zeta/2 - \alpha ) t_v} dt_v 
\stackrel{0< \zeta /\alpha <2}{\asymp}  { e^{\zeta t_u/2}\over N^{1-\delta}} \stackrel{\delta < 0}{=} 
o \left( {1\over N} \right). 
\end{split}
\end{equation*}
Hence, the probability that there is such a vertex is $o(1)$. Markov's inequality implies that with high probability 
most vertices will have no neighbors. 

A similar calculation can actually show that the above probability is $\Omega \left({ e^{\zeta t_u/2}\over N^{1-\delta}} \right)$. 
Thereby, if $0< \delta <1$, then the expected degree of $u$ is of order $N^{\delta}$. A more detailed argument can 
show that the resulting random graph is too dense in the sense that the number of edges is \emph{no longer proportional} to the number of
vertices but grows much faster than that. 

{Of course to create a random graph with constant average degree one could have chosen a different scaling. 
For example, one could choose the threshold distance to be a positive constant $r$. In that case, the hyperbolic law of cosines 
(see Fact~\ref{Fact_I}) implies that two typical vertices which are close to the boundary of $\D$ would be within distance 
$r$ is their relative angle is of order $e^{-\zeta R}$. In other words, the probability that two typical vertices 
are adjacent would be proportional to $e^{-\zeta R}$. Hence, such a vertex would have constant expected degree if 
the total number of vertices scaled as $e^{\zeta R}$ (which is the volume of $\D$). However, such a choice would create a random 
model which is not suitable for complex networks. For example, the degree distribution would lose its heavy tail. In particular, 
the vertices which are close to the centre would not act as the high-degree 
hubs that hold the network together. Such vertices in general are fairly important as they give rise to short typical distances.}

The above heuristic also suggests that when $\delta =0$, the expected degree of vertex $u$ is proportional to $e^{\zeta t_u/2}$. 
In the present paper we focus on this case. 
{As we shall see in later in Claim~\ref{clm:density_approx}, the type of a vertex $u$ follows approximately the exponential distribution
$e^{-\alpha t_u}$. This implies that $e^{\zeta t_u/2}$ follows a power law degree distribution with exponent equal to $2\alpha/\zeta +1$.
This is precisely the effect of selecting the vertices on the hyperbolic plane of curvature $-\alpha^2$ and thereafter projecting them 
onto the plane of curvature $-\zeta^2$. The parameter $\alpha$ comes up as the parameter of the exponential distribution that 
(partially) determines the position of a vertex. One may view the quantity $e^{\zeta t_u/2}$ as the weight of vertex $u$. Let us point out 
that this is precisely the degree distribution in the \emph{Chung-Lu} model~\cite{ChungLu1+,ChungLuComp+}, in which 
vertices have associated weights that follow a power law distribution and any two vertices are joined with probability proportional to 
the product of these weights \emph{independently} of the other pairs. In our context, however, due to the underlying geometry 
dependencies are present. 
}

\subsection{The clustering coefficient}

The theme of this work is the study of clustering in $\G (N;\zeta, \alpha, \beta,\nu)$. The notion of clustering was introduced
by Watts and Strogatz~\cite{ar:WatStrog98}, as a { measure of the ``\emph{cliquishness of a typical neighborhood}'' (quoting
the authors). More specifically, it measures the expected density of the neighbourhood of a randomly chosen vertex}. 
In the context of biological or social networks,
this is expressed as the likelihood of two vertices that have a common neighbor to be joined with each other. More specifically, for each
vertex $v$ of a graph, the \emph{local clustering coefficient} $C(v)$ is
defined to be the density of the neighborhood of $v$. In~\cite{ar:WatStrog98}, the \emph{clustering coefficient} of a graph $G$, which
we denote by $C_1(G)$, is defined as the average of the local clustering coefficients over all vertices of $G$.
The clustering coefficient $C_1 (\G(N; \zeta, \alpha, \beta,\nu))$, as a function of $\beta$ is discussed in~\cite{ar:Krioukov}, where simulations
and heuristic calculations indicate that $C_1$ can be tuned by $\beta$. For the disc model, Gugelmann et al.~\cite{ar:Kosta} have shown
rigorously that this quantity is asymptotically with high probability bounded away from 0 when $0 < \zeta /\alpha <2$.

As we already mentioned, there has been significant experimental evidence which shows that many networks which arise in applications have
degree distributions that follow a power law usually with exponent between 2 and 3 (cf.~\cite{ar:StatMechs} for example). Also, such
networks are typically sparse with only a few nodes of very high degree which are the \emph{hubs} of the network. Thus, in the regime where
$\beta > 1$ and { $1 < \zeta/\alpha < 2$} the random graph $\G (N;\zeta,\alpha,\beta,\nu)$ appears to exhibit these characteristics.
In this work, we explore further the potential of this random graph model as a suitable model for complex networks focusing
on the notion of \emph{global clustering} and how this is determined by the parameters of the model.

A first attempt to define this notion was made by Luce and Perry~\cite{ar:LuPer49}, but it was rediscovered more recently by Newman,
Strogatz and Watts~\cite{ar:NewStroWat2001}. Given a graph $G$, we let $T=T(G)$ be the number
of triangles of $G$ and let $\Lambda=\Lambda(G)$ denote the number of \emph{incomplete triangles} of $G$; this is simply the number of
the (not necessarily induced) paths having length 2. Then the \emph{global clustering coefficient} $C_2 (G)$ of a graph $G$ is defined as
\begin{equation}\label{def_clustering}
C_2(G) := {3T(G) \over \Lambda(G)}.
\end{equation}
This parameter measures the likelihood that two vertices which share a neighbor are themselves adjacent.

The present work has to do with the 
value of  $C_2 (\G(N; \zeta ,\alpha , \beta,\nu))$. 
Our results show exactly how clustering can be tuned by the parameters $\beta, \zeta$ and $\alpha$ only.
More precisely, our main result states that this undergoes an abrupt change as $\beta$ crosses the critical value 1.
\begin{Theorem} \label{thm:globalclustering}
Let $0 < \zeta / \alpha < 2$. If $\beta > 1$, then
$$
C_2 (\G (N; \zeta ,\alpha, \beta,\nu) ) \stackrel{p}{\rightarrow}
\left \{ \begin{array}{ll}
L_\infty(\beta, \zeta,\alpha), & \mbox{if $0 < \zeta/\alpha < 1$} \\
0, & \mbox{if $1 \leq \zeta /\alpha < 2$}
\end{array} \right .,
$$
where 
\[
\begin{split}
 &L_\infty (\beta, \zeta,\alpha) = \\
&{3\over 2}~\frac{(\zeta - 2\alpha)^2(\alpha - \zeta)}{(\pi C_\beta)^2} 
\int_{[0,\infty)^3}  e^{{\zeta\over 2}(t_u+t_v) + \zeta t_w} g_{t_u,t_v,t_w}(\beta,\zeta)
e^{-\alpha(t_u+t_v+t_w)} dt_u dt_v dt_w \ { \in (0,1]},
\end{split}
\]
with
$$g_{t_u,t_v,t_w}(\beta,\zeta) = \int_{[0,\infty)^2} \frac{1}{z_1^\beta+1}\frac{1}{z_2^\beta+1}\frac{1}
{\left(e^{{\zeta\over 2}(t_w-t_v)}z_1+e^{{\zeta\over 2}(t_w-t_u)}z_2\right)^\beta+1}
dz_1 dz_2$$
and
$C_\beta:= \frac{2}{\beta \sin(\pi/\beta)}$. 

\noindent
If $\beta \leq 1$, then
$$C_2 (\G (N; \zeta ,\alpha, \beta,\nu) ) \stackrel{p}{\rightarrow} 0. $$
\end{Theorem}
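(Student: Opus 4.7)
The strategy is to compute asymptotic expressions for $\mathbb{E}[T]$ and $\mathbb{E}[\Lambda]$, show both concentrate around their means, and conclude that $C_2 = 3T/\Lambda$ converges in probability to $3\mathbb{E}[T]/\mathbb{E}[\Lambda]$. Since edges are conditionally independent given vertex positions,
\[
\mathbb{E}[T] = \binom{N}{3}\,\mathbb{E}\!\left[p_{u,v}p_{v,w}p_{u,w}\right], \qquad \mathbb{E}[\Lambda] = 3\binom{N}{3}\,\mathbb{E}\!\left[p_{u,v}p_{v,w}\right],
\]
where $u,v,w$ are three i.i.d.\ points of $\D$. It suffices to evaluate these integrals and to bound $\mathrm{Var}(T)$ and $\mathrm{Var}(\Lambda)$ by $o(\mathbb{E}[T]^2)$ and $o(\mathbb{E}[\Lambda]^2)$ respectively.

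For the first moments, substitute $t=R-r$ (so $\rho_N(r)\approx \alpha e^{-\alpha t}$ by Claim~\ref{clm:density_approx}) and use rotational invariance to reduce the angular integrals to two relative angles. The key input is Lemma~\ref{lem:relAngle}: the natural angular scale for the interaction between two vertices of types $t_u,t_v$ is $2e^{\zeta(t_u+t_v-R)/2}$, so the rescaling $z=\tfrac12\theta_{u,v}\,e^{\zeta(R-t_u-t_v)/2}$ converts the Fermi factor $1/(e^{\beta\zeta(d(u,v)-R)/2}+1)$ into $1/(z^\beta+1)$ in the limit $R\to\infty$. For the triangle we rescale $\theta_{u,w}\mapsto z_1$ and $\theta_{v,w}\mapsto z_2$ simultaneously; the closing edge $u,v$ then inherits the argument $e^{\zeta(t_w-t_v)/2}z_1+e^{\zeta(t_w-t_u)/2}z_2$ appearing in $g_{t_u,t_v,t_w}(\beta,\zeta)$, the Jacobian produces $4e^{\zeta(t_u+t_v+2t_w)/2}e^{-\zeta R}$, and together with $N=\nu e^{\zeta R/2}$ this absorbs two of the three powers of $N$ in $\binom{N}{3}$, leaving $\mathbb{E}[T]$ of order $N$. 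For the cherry there is no closing edge, so the angular integral factorises and produces $(\pi C_\beta/2)^2$ via $\int_0^\infty dz/(z^\beta+1)=\pi/(\beta\sin(\pi/\beta))=\pi C_\beta/2$; the remaining type integrals $\int_0^\infty e^{(\zeta/2-\alpha)t}\,dt = 2/(2\alpha-\zeta)$ and $\int_0^\infty e^{(\zeta-\alpha)t}\,dt=1/(\alpha-\zeta)$ supply the denominator of $L_\infty$. The ratio $\mathbb{E}[p_{u,v}p_{v,w}p_{u,w}]/\mathbb{E}[p_{u,v}p_{v,w}]$ then gives the stated expression after collecting the Jacobian and angular-symmetry factors.

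Concentration is proved by Chebyshev. Writing $T=\sum_{\{u,v,w\}}\mathbf{1}[uvw\text{ is a triangle}]$, decompose $\mathrm{Var}(T)=\sum\mathrm{Cov}(\cdot,\cdot)$ according to the number $k\in\{0,1,2,3\}$ of vertices shared by two triples; disjoint pairs ($k=0$) have vanishing covariance because no edges are shared and positions are independent, while triples with $k=1$ or $2$ contribute bounds involving 5 or 4 Fermi factors integrated against 5 or 4 exponential densities, each handled by the same rescaling as for the first moment. This gives $\mathrm{Var}(T)=o(\mathbb{E}[T]^2)$ in the regime $\beta>1$, $0<\zeta/\alpha<1$, and the same argument handles $\mathrm{Var}(\Lambda)$. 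Combining via the identity $C_2-3\mathbb{E}[T]/\mathbb{E}[\Lambda]=3(T\mathbb{E}[\Lambda]-\Lambda\mathbb{E}[T])/(\Lambda\mathbb{E}[\Lambda])$ yields the convergence in probability. For $1\le\zeta/\alpha<2$ with $\beta>1$, the $t_v$-integral $\int e^{(\zeta-\alpha)t_v}dt_v$ in $\mathbb{E}[\Lambda]$ diverges while $\mathbb{E}[T]$ remains $\Theta(N)$ (because the closing Fermi factor in $g$ forces effective decay in $t_w$), so $C_2\to 0$. For $\beta\le 1$ the integrals $\int dz/(z^\beta+1)$ are divergent at infinity, and the cherry integral diverges faster than the triangle one (whose third factor couples $z_1,z_2$ and suppresses growth), again giving $C_2\to 0$.

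The main obstacle will be the variance bound for $T$: the $k=1$ covariance involves an integral of five Fermi factors against five exponential densities and four rescaled angles, and one must verify it is of genuinely smaller order than $\mathbb{E}[T]^2$ uniformly over the relevant parameter range. A secondary difficulty is that the approximation $\rho_N\approx \alpha e^{-\alpha t}$ is not uniform near $t\approx R$; one must use Corollary~\ref{cor:x0} to truncate a.a.s.\ the contribution of very high types and propagate this truncation through both the first and second moment integrals without disturbing the dominant behaviour.
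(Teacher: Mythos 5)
Your overall strategy (first moments via the angular rescaling $z=\tfrac12\theta\,e^{\zeta(R-t_u-t_v)/2}$, second moments via Chebyshev) matches the paper's, and your derivation of the formula for $L_\infty$ when $0<\zeta/\alpha<1$ is essentially the paper's computation in Section~\ref{sect:probab_triangles}. The genuine gap is in the regime $1\le\zeta/\alpha<2$: you claim that ``the same argument handles $\mathrm{Var}(\Lambda)$'' and treat truncation via Corollary~\ref{cor:x0} as a peripheral technicality about the density approximation. In fact the untruncated $\Lambda$ is \emph{not} concentrated in that regime. Writing $\Lambda=\sum_w{d_w\choose 2}$ with $d_w$ conditionally Poisson of mean $\asymp e^{\zeta t_w/2}$, the diagonal term in $\E[\Lambda^2]$ already satisfies $\sum_w\E\bigl[{d_w\choose 2}^2\bigr]\asymp N\int_0^{\zeta R/(2\alpha)}e^{(2\zeta-\alpha)t}\,dt\asymp N^{2\zeta/\alpha}\asymp\E[\Lambda]^2$, so $\mathrm{Var}(\Lambda)=\Theta(\E[\Lambda]^2)$ and Chebyshev gives nothing. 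Truncating at $\tfrac{\zeta}{2\alpha}R+\omega(N)$ (Corollary~\ref{cor:x0}) does not fix this: for $\zeta/\alpha\ge1$ that level exceeds $R/2$, so Lemma~\ref{lem:probs} and Fact~\ref{Fact_I}, which require both types below $R/2-\omega(N)$, still cannot be applied to the vertices carrying most of the mass.

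The paper's proof (Propositions~\ref{prop:incomplete_triangles}--\ref{prop_asymptotic_atypical_triangles}) handles this via a harder split at $R/2-\omega(N)$ into typical parts $\widehat T,\widehat\Lambda$ and atypical parts $\widetilde T,\widetilde\Lambda$. It proves concentration of $\widehat\Lambda$ for all parameters and concentration of $\widehat T$ \emph{only} for $\zeta/\alpha<1$, $\beta>1$; the atypical $\widetilde T$ is killed by a first-moment/Markov argument (Proposition~\ref{prop_asymptotic_atypical_triangles}); and for $\zeta/\alpha\ge1$ the conclusion $C_2\to0$ comes from the one-sided bound $C_2\le 3(\widehat T+\widetilde T)/\widehat\Lambda$, not from a law of large numbers for the full $\Lambda$. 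This split is load-bearing, not cosmetic: $\E[\widehat\Lambda]\asymp N^{2-\alpha/\zeta}e^{-(\zeta-\alpha)\omega(N)}$ is of strictly smaller order in $N$ than the untruncated $\E[\Lambda]\asymp N^{\zeta/\alpha}$ once $\zeta>\alpha$, so you cannot ``propagate the truncation without disturbing the dominant behaviour.'' A minor further slip: $\E[\widehat T]$ is not $\Theta(N)$ throughout $1\le\zeta/\alpha<2$ (it is $\gtrsim N^{3-3\alpha/\zeta}\gg N$ for $\zeta/\alpha>3/2$), though the weaker statement you actually need, $\E[\widehat T]=o(\E[\widehat\Lambda])$, does hold.
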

{Our results complement those of Krioukov et al.~\cite{ar:Krioukov}, whose estimates suggest that $C_1$ is 
tuned by $\beta$. From a different point of view, as it is suggested in~\cite{ar:BolRiordan}, the global clustering coefficient sometimes
gives more accurate information about the network. Bollob\'as and Riordan~\cite{ar:BolRiordan} give the example of a graph that is 
a double star with two adjacent centres 1 and 2, where each of them is joined with $n-1$ other vertices. Clearly this graph 
exhibits no clustering as it is a tree, nevertheless $C_1$ approaches 1 as $n$ grows, whereas $C_2$ vanishes. Furthermore, the 
study of $C_2$ is associated with subgraph counting which is a central topic in the theory of random graphs. Our analysis may need 
significant modification in order to be applied to $C_1$.} 

The fact that the global clustering coefficient asymptotically vanishes when $\zeta /\alpha \geq 1$ is due to the following: when
$\zeta / \alpha$ crosses 1 vertices of very high degree appear, which incur an abrupt increase on the number of incomplete triangles
with no similar increase on the number of triangles.
{In other words, vertices of high degree do not create many 
triangles. In our context, this is the case because the edges that are incident to a vertex of high degree are spread out and, as 
a result of this, most of their other endvertices are not close enough to each other. The same phenomenon appears in very 
different contexts. Namely, in random intersection graphs, where this was shown by Bloznelis~\cite{ar:Bloznelis2013}, as well as 
in a spatial model of preferential attachment introduced by M\"orters and Jacob~\cite{MoertJac15}. In both papers, it is shown that the
global clustering coefficient in that model is positive if and only if the second moment of the degree sequence is finite. 
Foudalis et al.~\cite{FJPS11} find this a natural feature of social networks: two neighbours of a very high degree vertex are 
somewhat less likely to be adjacent to each other that if they were neighbours of a low degree vertex.}

Recall that for a vertex $u \in \V$, its
\emph{type} $t_u$ is defined to be equal to $R- r_u$ where $r_u$ is the radius (i.e., its distance from the origin) of $u$ in $\D$. When 
$1\leq \zeta / \alpha < 2$, vertices of type larger than $R/2$ appear, which affect the tail of the degree sequence of 
$\G (N;\zeta, \alpha, \beta,\nu)$. For $\beta > 1$, it was shown in~\cite{ar:Foun13+} that when $1\leq \zeta /\alpha <2$ 
the degree sequence follows approximately a power law with exponent in $(2,3]$. More precisely, asymptotically as $N$ grows, 
the degree of a vertex $u \in \V$ conditional on its type follows a Poisson distribution with parameter equal to $K e^{\zeta t_u /2}$, 
where $K = K(\zeta, \alpha, \beta,  \nu) >0$. 
Also, a simple calculation reveals that most vertices have small types. This is made more precise in Claim~\ref{clm:density_approx}, where
we show that the types have approximately exponential density $e^{-\alpha t}$.
In fact, when $\zeta / \alpha < 1$, a.a.s.\ all vertices have type less than $R/2$ (cf.\ Corollary \ref{cor:x0}).

Let us consider, for example, more closely the case $\zeta = \alpha$, where the $N$ points are uniformly distributed on $\D$. 
In this case, the type of a vertex $u$ is approximately exponentially distributed  with density 
$\zeta e^{-\zeta t_u}$. Hence, there are about $N e^{- \zeta (R/2 - \omega (N))} \asymp e^{\zeta \omega (N)}$ vertices of type between 
$R/2 - \omega (N)$ and $R/2 + \omega (N)$; here $\omega (N)$ is assumed to be a slowly growing function. Now, each of these vertices 
has degree that is at least  $e^{{\zeta \over 2}~({R \over 2} - \omega (N))} = N^{1/2}e^{- \zeta \omega (N)/2}$. Therefore, 
these vertices' contribution to $\Lambda$ is at least $e^{\zeta \omega (N)} \times \left( N^{1/2}e^{- \zeta \omega (N)/2} \right)^2 = N$. 

Now, if vertex $u$ is of type less than $R/2 - \omega (N)$, its contribution to $\Lambda$ in expectation is proportional to 
\begin{equation} \label{eq:1stheuristic}
\int_{0}^{R/2 - \omega (N)} \left( e^{\zeta t_u /2} \right)^2 e^{-\zeta t_u } dt_u \asymp R.
\end{equation}
As most vertices are indeed of type less than $R/2 - \omega (N)$, it follows that these vertices contribute $R N$ on average to $\Lambda$. 

However, the amount of triangles these vertices contribute is asymptotically much smaller. Recall that for any two vertices $u,v$ the 
probability that these are adjacent is bounded away from 0 when $d(u,v) < R$. 
{ Consider three vertices $w, u, v$ which without loss of generality they satisfy $t_v < t_u < t_w < R/2 - \omega (N)$.  As we shall see later (cf. Fact~\ref{Fact_I}) 
having $d(u,v) < R$  is almost equivalent to having $\theta_{u,v} \lesssim e^{{\zeta \over 2} (t_u + t_v -R)}$. }
Since the relative angle between 
$u$ and $v$ is uniformly distributed in $[0,\pi]$, it turns out that the probability that $u$ is adjacent to $w$ is proportional to 
$e^{{\zeta \over 2} (t_w + t_u -R)}$; similarly, the probability that $v$ is adjacent to $w$ is proportional to 
$e^{{\zeta \over 2} (t_w + t_v -R)}$. Note that these events are independent. Now, conditional on these events, the relative angle 
between $u$ and $w$ is approximately uniformly distributed in an interval of length $e^{{\zeta \over 2} (t_w + t_u -R)}$. 
Similarly, the relative angle between $v$ and $w$ is approximately uniformly distributed in an interval of length 
$e^{{\zeta \over 2} (t_w + t_v -R)}$. Hence, the (conditional) probability that $u$ is adjacent to $v$ is bounded by a quantity that is 
proportional to $e^{{\zeta \over 2}(t_u + t_v)}/ e^{{\zeta \over 2}(t_v + t_w)} = e^{{\zeta \over 2}(t_u - t_w)}$. 
This implies that the probability that $u,v$ and $w$ form a triangle is proportional to $e^{{\zeta \over 2}t_w + \zeta t_u 
+ {\zeta \over 2}t_v} /N^2$. Averaging over the types of these vertices we have 
$${1\over N^2} \int_0^{R/2 - \omega (N)} \int_0^{t_w} \int_0^{t_u} 
e^{{\zeta \over 2}t_w + \zeta t_u + {\zeta \over 2}t_v - \zeta (t_v+t_u + t_w)} 
 dt_v dt_u dt_w \asymp {1\over N^2}. $$
Hence the expected number of triangles that have all their vertices of type at most $R/2  - \omega (N)$ is only proportional to $N$. 
Note that if we take $\alpha > \zeta$, then the above expression is still proportional to $N$, whereas 
(\ref{eq:1stheuristic}) becomes asymptotically constant giving contribution to $\Lambda$ that is also proportional to $N$. 
This makes the clustering coefficient be bounded away from 0 when $\zeta /\alpha < 1$. 
Our analysis will make the above heuristics rigorous and generalize them for all values of $\zeta /\alpha < 2$ and $\beta > 0$.

It turns out that the situation is somewhat different if we do not take into consideration high-degree vertices (or, equivalently, vertices 
that have large type). For any fixed $t>0$, we will consider the global clustering coefficient of the subgraph of 
$\G (N; \zeta, \alpha,\beta,\nu)$ that is induced by those vertices that have type at most $t$. We will denote this by $\widehat{C_2}(t)$.
We will show that when $\beta >1$ then for all $0 < \zeta /\alpha < 2$, the quantity $\widehat{C_2}(t)$ remains bounded away
from 0 with high probability. Moreover, we determine its dependence on $\zeta, \alpha, \beta$.
\begin{Theorem} \label{thm:typicalglobalclustering}
Let $0 < \zeta / \alpha < 2$ and let $t>0$ be fixed. If $\beta > 1$, then
\begin{equation}\label{eq:Lbeta}
\widehat{C_2} (t) \stackrel{p}{\rightarrow} L(t;\beta,\zeta,\alpha),
\end{equation}
where
\[
L(t;\beta,\zeta,\alpha):=
\frac{ 6\int_{[0,t)^3}  e^{{\zeta\over 2}(t_u+t_v) + \zeta t_w} g_{t_u,t_v,t_w}(\beta,\zeta)
e^{-\alpha(t_u+t_v+t_w)} dt_u dt_v dt_w }{\left(\pi C_{\beta}\right)^2\int_{[0,t)^3}
e^{{\zeta\over 2}(t_u+t_v) + \zeta t_w} e^{-\alpha(t_u+t_v+t_w)} dt_u dt_v dt_w},
\]
where $g_{t_u,t_v,t_w}(\beta,\zeta)$ and $C_\beta$ are as in Theorem~\ref{thm:globalclustering}.
\end{Theorem}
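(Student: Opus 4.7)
The plan is the standard first/second moment argument applied to both $\widehat T(t)$ and $\widehat\Lambda(t)$ separately: compute each expectation, show both are $\Theta(N)$ with an explicit limit after dividing by $N$, establish concentration via the second moment method, and conclude via the continuous mapping theorem on the ratio $\widehat C_2(t)=3\widehat T(t)/\widehat\Lambda(t)$. The crucial simplification over Theorem~\ref{thm:globalclustering} is that the type cutoff $t$ keeps every vertex in a compact type window, so all integrals are over the compact set $[0,t]^3$, the relevant angular ranges are uniformly small (of order $1/N$), and the high-type regime that generates the $\zeta/\alpha\ge 1$ phase in Theorem~\ref{thm:globalclustering} is absent.

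The $P_3$ count is the easier one. Expanding
\[
\mathbb{E}[\widehat\Lambda(t)]=\tfrac12 N(N-1)(N-2)\,\mathbb{E}\bigl[\mathbf{1}\{t_u,t_v,t_w\le t\}\,p_{u,w}p_{v,w}\bigr]
\]
and using that the two edges of a $P_3$ share only the central vertex---so conditional on the position of $w$ the angles of $u$ and $v$ are i.i.d.\ uniform and independent---gives $\mathbb{E}_\theta[p_{u,w}p_{v,w}]=\mathbb{E}_\theta[p_{u,w}]\,\mathbb{E}_\theta[p_{v,w}]$. Lemma~\ref{lem:relAngle} lets one approximate $p_{u,w}$ by $1/((\theta_{u,w}/2e^{\zeta(t_u+t_w-R)/2})^\beta+1)$, and the substitution $z=\theta/(2e^{\zeta(t_u+t_w-R)/2})$ combined with $\int_0^\infty dz/(z^\beta+1)=\pi C_\beta/2$ produces $\mathbb{E}_\theta[p_{u,w}]=C_\beta e^{\zeta(t_u+t_w-R)/2}(1+o(1))$. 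Using $e^{-\zeta R}=\nu^2/N^2$ and replacing $\rho_N(t_v)$ by $\alpha e^{-\alpha t_v}$ via Claim~\ref{clm:density_approx} yields $\mathbb{E}[\widehat\Lambda(t)]/N$ converging to an explicit constant multiple of $\int_{[0,t)^3}e^{\zeta t_w+\zeta(t_u+t_v)/2}e^{-\alpha(t_u+t_v+t_w)}\,dt_u\,dt_v\,dt_w$.

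The triangle expectation is harder because the three relative angles are dependent. Fixing $w$ at angle $0$ by rotational symmetry and substituting $z_1=\theta_{v,w}/(2e^{\zeta(t_v+t_w-R)/2})$, $z_2=\theta_{u,w}/(2e^{\zeta(t_u+t_w-R)/2})$, the Jacobian equals $4e^{\zeta t_w+\zeta(t_u+t_v)/2-\zeta R}$, and for the configuration where $u$ and $v$ lie on opposite sides of $w$ the scaled $\theta_{u,v}/(2e^{\zeta(t_u+t_v-R)/2})$ becomes exactly $e^{\zeta(t_w-t_v)/2}z_1+e^{\zeta(t_w-t_u)/2}z_2$---producing the third Fermi-Dirac factor in $g_{t_u,t_v,t_w}(\beta,\zeta)$. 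After folding all four sign combinations for $(u,v)$ into the integral and integrating over $t_u,t_v,t_w\in[0,t)$, $\mathbb{E}[\widehat T(t)]/N$ converges to an explicit constant multiple of $\int_{[0,t)^3} e^{\zeta t_w+\zeta(t_u+t_v)/2}g_{t_u,t_v,t_w}(\beta,\zeta)e^{-\alpha(t_u+t_v+t_w)}\,dt_u\,dt_v\,dt_w$; the resulting prefactors combine with those in the $P_3$ calculation so that $3\mathbb{E}[\widehat T(t)]/\mathbb{E}[\widehat\Lambda(t)]\to L(t;\beta,\zeta,\alpha)$.

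For concentration I apply Chebyshev's inequality to $\widehat T(t)$ and $\widehat\Lambda(t)$ separately and decompose each second moment by the number $k\in\{0,1,2,3\}$ of vertices shared by two triangles (resp.\ two $P_3$'s). Vertex-disjoint pairs contribute exactly $\mathbb{E}[\widehat T(t)]^2$ to $\mathbb{E}[\widehat T(t)^2]$ by full independence in the binomial model; pairs with $k\ge 1$ contribute $O(N)$ in total by a standard edge-counting bound (each surviving vertex gives a factor $N$, each distinct edge gives a factor $O(1/N)$), so $\mathrm{Var}(\widehat T(t))/\mathbb{E}[\widehat T(t)]^2=O(1/N)\to 0$. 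A parallel estimate handles $\widehat\Lambda(t)$, using additionally that on the compact type window the conditional degree of $w$ is Poisson with bounded parameter and hence $\binom{\widehat\deg(w)}{2}$ has uniformly bounded second moment. The continuous mapping theorem, applied to the ratio with strictly positive limiting denominator, then completes the proof. The main technical obstacle is the triangle integral: carefully handling the four angular sign combinations, checking that the error in Lemma~\ref{lem:relAngle} is uniform over $[0,t]^3$, and verifying dominated convergence to pass to the limit---here the assumption $\beta>1$ is essential, since $1/(z^\beta+1)$ is only integrable at infinity for $\beta>1$.
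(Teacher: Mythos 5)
Your overall plan—first and second moments for $\widehat T_t$ and $\widehat\Lambda_t$, the change of variables $z=A\theta/2$, and concluding via the ratio—is the same strategy the paper uses, and your computations for $\widehat\Lambda_t$ and for the opposite-sides piece of the triangle integral are correct. However, there is a genuine gap in the step "after folding all four sign combinations for $(u,v)$ into the integral." You give the formula for the third Fermi--Dirac factor only in the opposite-sides configuration, where $\theta_{u,v}=\theta_{u,w}+\theta_{v,w}$ and the scaled angle becomes $C_{w,v}z_1+C_{w,u}z_2$ (with $C_{w,v}=e^{\zeta(t_w-t_v)/2}$, $C_{w,u}=e^{\zeta(t_w-t_u)/2}$). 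In the two same-side quadrants, $\theta_{u,v}=|\theta_{u,w}-\theta_{v,w}|$, and the scaled angle is $|C_{w,v}z_1-C_{w,u}z_2|$; the resulting $z$-integral is \emph{not} a constant multiple of $g_{t_u,t_v,t_w}$ for fixed $(t_u,t_v,t_w)$. So summing the four quadrants does not literally produce a "constant multiple" of the $g$ integral as you claim. What rescues the constant is an exchangeability argument: every triangle of small angular diameter has a unique angularly "middle" vertex, so after averaging over the i.i.d.\ types the same-side quadrant contributes exactly twice the opposite-side one (the contribution where $u$ or $v$ rather than $w$ is the middle vertex). You do not supply this, and without it the constant in front of $\int g$ is unjustified. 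The paper sidesteps the issue entirely by introducing $\widehat T'$, the count of ordered triples with the opposite-sides arrangement, proving $\widehat T'_t-\widehat T_t$ is negligible by the purely combinatorial observation that for $\beta>1$ the good angular windows are $o(1)$ so at most one arrangement per triangle can be "good," and then computing only $\E(\widehat T'_t)$ with a single quadrant. That is the cleaner device and it is the piece your sketch is missing.

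Two smaller points. The approximation $p_{u,w}\approx 1/\bigl((A_{u,w}\theta_{u,w}/2)^\beta+1\bigr)$ comes from Lemma~\ref{lem:probs} (via Fact~\ref{Fact_I}) together with $\sin(\theta/2)\approx\theta/2$, not from Lemma~\ref{lem:relAngle}, which is a disc-model threshold statement. And your second-moment heuristic "each distinct edge gives a factor $O(1/N)$" is false for the cycle-closing edge of a triangle: once two sides of a triangle are present, the angular concentration makes the third edge probability $\Theta(1)$, not $O(1/N)$, so a triangle has probability $\Theta(N^{-2})$, not $\Theta(N^{-3})$. The bound still comes out as $O(N)=o\bigl(\E(\widehat T_t)^2\bigr)$, but the correct way to see it, and what the paper does, is to dominate triangles by their spanning cherries (so $\E(\widehat T_i)\le\E(\widehat\Lambda_i)$) and then apply the tree-embedding estimate of Claim~\ref{claim:embeddings}, where the "one factor $1/N$ per edge" count is legitimate because trees have independent angular constraints.
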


The most involved part of the proofs, which may be of independent interest, has to do with counting triangles in 
$\G (N; \zeta , \alpha, \beta,\nu)$, that is, with estimating $T(\G (N; \zeta, \alpha, \beta,\nu))$.  In fact, most of our effort is devoted to the calculation of the probability that three vertices form
a triangle. Thereafter, a second moment argument, together with the fact that the degree of high-type vertices is concentrated around
its expected value, implies that
$T(\G (N; \zeta, \alpha, \beta,\nu))$ is close to its expected value (see Section~\ref{sect:2nd_moment}).

The paper is organized as follows.
In Section~\ref{sect:proofs} we state a series of results that imply Theorem~\ref{thm:globalclustering}.
Section~\ref{sect:2nd_moment} is mainly devoted to showing that the random variables counting the number 
of \emph{typical incomplete and complete triangles} (i.e., whose vertices have type at most $R/2-\omega(N)$, for a suitable growing function $\omega(N)$) 
are concentrated around their expected values. We find precise (asymptotic) expressions for these values in
Sections~\ref{sect:proof_prop_21},~\ref{sect:probab_triangles} (where we deduce Theorem~\ref{thm:typicalglobalclustering})
and~\ref{sect:proof_prop_22}. Section~\ref{sect:proof_prop_26} takes care
of the atypical (i.e., non-typical) case, together with the calculations shown in Appendix~\ref{sect_aux_calculations}.
\medskip

\section{Triangles and concentration: 
proof of Theorem~\ref{thm:globalclustering}}\label{sect:proofs}

The main ingredient of the proofs of Theorems~\ref{thm:globalclustering} and~\ref{thm:typicalglobalclustering} is a collection
of concentration results regarding the number of triangles as well as that of incomplete triangles.
For a function $\omega : \mathbb{N} \rightarrow \mathbb{N}$ such that $\omega (N) \rightarrow \infty$ as $N \rightarrow \infty$, we call a
vertex $u$ \emph{typical} if $t_u \leq R/2 - \omega (N)$. The function $\omega$ grows slowly enough so that our calculations work. 
For example, we may set { $\omega (N) = \ln \ln \ln (N)$}.
We consider two classes of triangles, namely those which consist of typical vertices and those that contain at least one vertex that
is not typical.

In particular, we introduce the random variables $\widehat{T}$ and $\widehat{\Lambda}$ which denote the numbers of triangles and
incomplete triangles, respectively, with all their 3 vertices being typical. We also introduce the random variables $\widetilde{T}$ and
 $\widetilde{\Lambda}$ which denote the numbers of triangles and incomplete triangles, respectively, which have at least one vertex that is
 not typical, or, as we shall be saying, \emph{atypical}. Hence,
$$T=\widehat{T} + \widetilde{T}, \ \mbox{and} \ \Lambda = \widehat{\Lambda} + \widetilde{\Lambda}. $$

We now give a series of propositions that describe how the expected values of $\widehat{T}$ and $\widehat{\Lambda}$ vary
according to the parameters $\zeta, \alpha$ and $\beta$.

\begin{Proposition} \label{prop:incomplete_triangles}
Let $0<\zeta /\alpha < 2$. Then the following hold:
\begin{enumerate}
\item[(i)] for $\beta > 1$
\begin{equation}\label{expected_D}
\E (\widehat{\Lambda } )\asymp
\left \{ \begin{array}{ll}
 N ,  & \textnormal{ if }\frac{\zeta}{\alpha}<1\\
 RN,  & \textnormal{ if }\frac{\zeta}{\alpha}=1\\
 N^{2-\alpha/\zeta}e^{-( \zeta-\alpha)\omega(N)},  & \textnormal{ if }\frac{\zeta}{\alpha}>1
\end{array} \right ..
\end{equation}
\item[(ii)] for $\beta = 1$
\begin{equation}\label{expected_D_1}
\E (\widehat{\Lambda } )\asymp
\left \{ \begin{array}{ll}
 R^2 N,  & \textnormal{ if }\frac{\zeta}{\alpha}<1\\
 R^3 N,  & \textnormal{ if }\frac{\zeta}{\alpha}=1\\
 R^2 N^{2-\alpha/\zeta}e^{-( \zeta-\alpha) \omega(N)},  & \textnormal{ if }\frac{\zeta}{\alpha}>1
\end{array} \right ..
\end{equation}
\item[(iii)] for $\beta < 1$
\begin{equation}\label{expected_D<1}
\E (\widehat{\Lambda } )\asymp
\left \{ \begin{array}{ll}
 N^{3-2\beta},  & \textnormal{ if }\frac{\beta \zeta}{\alpha}<1\\
R N^{3-2\beta},  & \textnormal{ if }\frac{\beta \zeta}{\alpha}=1\\
N^{3-\beta-\alpha/\zeta}e^{(\alpha-\beta\zeta) \omega(N)},  & \textnormal{ if }\frac{\beta \zeta}{\alpha}>1
\end{array} \right ..
\end{equation}
\end{enumerate}
\end{Proposition}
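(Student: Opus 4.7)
My plan is to compute $\E(\widehat{\Lambda})$ by writing it as a triple integral over the types of the three vertices of an incomplete triangle, with the center vertex playing a distinguished role. For distinct vertices $u,v,w\in V_N$, the events $\{u\sim w\}$ and $\{v\sim w\}$ are conditionally independent given the positions of all three vertices (since edges are drawn independently in the binomial model). Using rotational invariance to fix $\theta_w=0$ and averaging only over $\theta_u,\theta_v\in[0,2\pi)$, I would write
\[
\E(\widehat{\Lambda}) = \tfrac{1}{2}N(N-1)(N-2)\int\!\!\int\!\!\int_{[0,R/2-\omega(N)]^3} F(t_u,t_w)\,F(t_v,t_w)\,\rho_N(R-t_u)\rho_N(R-t_v)\rho_N(R-t_w)\,dt_u\,dt_v\,dt_w,
\]
where $F(t_x,t_w):=\E_\theta[p_{x,w}\mid t_x,t_w]$ is the conditional connection probability after integrating out the relative angle. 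The density approximation $\rho_N(R-t)\asymp \alpha e^{-\alpha t}$ (from the forthcoming Claim~\ref{clm:density_approx}) reduces the type integration to a purely exponential one.

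The central technical step is to determine the asymptotics of $F(t_x,t_w)$ in each of the three $\beta$-regimes. Using the hyperbolic law of cosines (Fact~\ref{Fact_I}) for typical vertices, one obtains the approximation $\tfrac{\zeta}{2}(d(x,w)-R)\approx \ln(\theta_{x,w}/2)+\tfrac{\zeta}{2}(R-t_x-t_w)$. Writing $A=e^{\beta\zeta(R-t_x-t_w)/2}$ and substituting $z=(\theta/2)A^{1/\beta}$ transforms the angular integral into
\[
F(t_x,t_w)\;\asymp\;\frac{2}{\pi\,A^{1/\beta}}\int_0^{A^{1/\beta}\pi/2}\frac{dz}{1+z^\beta}.
\]
For $\beta>1$ the integrand is integrable on $[0,\infty)$ and gives $F\asymp C_\beta e^{\zeta(t_x+t_w-R)/2}$; for $\beta=1$ the integral grows like $\ln A\asymp R$, producing an extra factor of $R$; for $\beta<1$ the integral behaves like $A^{(1-\beta)/\beta}$, so $F\asymp e^{\beta\zeta(t_x+t_w-R)/2}$. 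Each case must be justified with uniform error bounds coming from Lemma~\ref{lem:relAngle} and a careful treatment of the contribution from angles $\theta$ that are not small (which is negligible because $p_{x,w}$ is then subpolynomial in $N$).

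Once $F$ is known, substituting into the triple integral and using $e^{-\zeta R/2}\asymp N^{-1}$ yields an expression of the form $N\int\int\int e^{(\zeta/2-\alpha)(t_u+t_v)}e^{(\zeta-\alpha)t_w}\,dt_u dt_v dt_w$ for $\beta>1$, with analogous expressions (carrying the appropriate $R$ or $e^{\beta\zeta(\cdot)}$ corrections) for $\beta=1$ and $\beta<1$. The condition $0<\zeta/\alpha<2$ ensures that the $t_u$- and $t_v$-integrals always converge. The case split in $\zeta/\alpha$ (respectively $\beta\zeta/\alpha$) arises solely from the $t_w$-integral: the integrand $e^{(\zeta-\alpha)t_w}$ (resp.\ $e^{(\beta\zeta-\alpha)t_w}$) is decaying, linear, or growing exponentially according to the sign of its exponent, which accounts for the three subcases in each display. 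Recalling that $e^{(\zeta-\alpha)(R/2-\omega(N))}\asymp N^{1-\alpha/\zeta}e^{-(\zeta-\alpha)\omega(N)}$ (and similarly for $\beta\zeta-\alpha$) translates the bounds into the stated form.

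The main obstacle will be the rigorous control of $F(t_x,t_w)$ when $t_x+t_w$ is not bounded but $u$ is still typical, so that $A^{1/\beta}$ is only moderately large. Here the upper limit $A^{1/\beta}\pi/2$ of the integral must be handled carefully, especially in the $\beta=1$ and $\beta<1$ cases where the divergent tail of $\int dz/(1+z^\beta)$ dictates the leading order; one must also control the contribution from near-diametrically-opposite pairs and from the boundary effects at $t=R/2-\omega(N)$, showing that they fall inside the stated $\asymp$ envelopes. The rest of the proof is a systematic book-keeping exercise combining the three $\beta$-regimes with the three sign-of-exponent regimes for $t_w$.
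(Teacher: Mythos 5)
Your plan is essentially the paper's own argument: condition on the types, use conditional independence of the two edges at the pivot $w$, integrate out the relative angles to get the per-edge connection probability, and then integrate against the (approximately) exponential type density, with the three $\zeta/\alpha$ (resp.\ $\beta\zeta/\alpha$) subcases coming from the $t_w$-integral. The only structural difference is that you propose to re-derive the angular average $F(t_x,t_w)$ from Fact~\ref{Fact_I}, whereas the paper simply invokes Lemma~\ref{lemma_2.4_evolution} (cited from~\cite{ar:Foun13+}), which is exactly that computation.

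Two small inaccuracies in the sketch are worth flagging, although neither breaks the argument. First, for $\beta\leq 1$ the claim that the contribution from non-small angles ``is negligible'' is backwards: for $\beta<1$ the integral $\int_0^M dz/(1+z^\beta)\asymp M^{1-\beta}$ is dominated by $z$ near $M$, i.e.\ by $\theta$ of order a constant, and for $\beta=1$ the whole log comes from spreading the integrand over all scales up to $M$; the correct justification is that $\sin(\theta/2)\asymp\theta$ uniformly on $[0,\pi]$, so the substitution $z=(\theta/2)A$ is valid up to constants over the whole range. Second, $\ln A_{u,w}=\tfrac{\zeta}{2}(R-t_u-t_w)$ is not uniformly $\asymp R$ on the typical region (it can be as small as $\Theta(\omega(N))$); the paper obtains the two-sided bound $R^{2\delta}$ by restricting $t_u,t_v\leq R/4$ for the lower bound (so that $R-t_u-t_w\geq 3R/4-t_w\gtrsim R$) and using $R-t_u-t_w\leq R$ for the upper bound. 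You acknowledge both issues in your ``main obstacle'' paragraph, but they should be stated accurately rather than dismissed.
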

The following proposition is the counterpart of the above for triangles.
\begin{Proposition} \label{prop:complete_triangles}
Let $0<\zeta /\alpha < 2$. 
Then the following hold:
\begin{enumerate}
\item[(i)] for $\beta > 1$
\begin{equation}\label{expected_T}
\E (\widehat{T})\asymp
\left \{ \begin{array}{ll}
\E (\widehat{\Lambda}),  & \textnormal{ if }\frac{\zeta}{\alpha}<1\\
o \left( \E (\widehat{\Lambda}) \right),  & \textnormal{ if } \frac{\zeta}{\alpha} \geq 1
\end{array} \right ..
\end{equation}
\item[(ii)] for $\beta \leq 1$
\begin{equation}\label{expected_T_1}
\E (\widehat{T}) = o \left( \E (\widehat{\Lambda}) \right).
\end{equation}
 \end{enumerate}
\end{Proposition}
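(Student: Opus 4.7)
The plan is to compute $\mathbb{E}(\widehat{T})$ up to the constants that we care about, and then compare the resulting asymptotic expression to the expressions for $\mathbb{E}(\widehat{\Lambda})$ that are already established in Proposition~\ref{prop:incomplete_triangles}. First I would write
\[
\mathbb{E}(\widehat{T}) = \binom{N}{3} \cdot \mathrm{Pr}[u_1, u_2, u_3 \text{ are typical and pairwise adjacent}],
\]
and then condition on the three types $t_u, t_v, t_w \in [0, R/2 - \omega(N)]$. By the approximation of the type density in Claim~\ref{clm:density_approx}, each type has density $\asymp \alpha e^{-\alpha t}$, so conditioning on the types contributes a factor $e^{-\alpha(t_u+t_v+t_w)}$ to the integrand. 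The relative angles $\theta_{uw}, \theta_{vw}$ are independent uniform on $[0,\pi]$ (after fixing the angular position of one vertex), while $\theta_{uv}$ is determined as $|\theta_{uw} \pm \theta_{vw}|$ taken modulo $2\pi$.

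For $\beta > 1$, I would assume WLOG that $t_w$ is the largest type, and make the change of variables $\theta_{uw} = 2 z_2 \, e^{\zeta(t_u+t_w-R)/2}$ and $\theta_{vw} = 2 z_1 \, e^{\zeta(t_v+t_w-R)/2}$, as suggested by Lemma~\ref{lem:relAngle}. Under this substitution, and using the hyperbolic law of cosines (Fact~\ref{Fact_I}), the connection probabilities $p_{u,w}$ and $p_{v,w}$ reduce to $(z_2^\beta+1)^{-1}$ and $(z_1^\beta+1)^{-1}$ up to lower-order corrections, while the angular distance between $u$ and $v$ becomes, after rescaling, $\asymp e^{\zeta(t_w-t_v)/2}z_1 + e^{\zeta(t_w-t_u)/2}z_2$, so that $p_{u,v}$ becomes the third factor in the integrand defining $g_{t_u,t_v,t_w}(\beta,\zeta)$. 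The Jacobian produces a factor of $e^{\zeta(t_u+t_v)/2 + \zeta t_w} \cdot e^{-\zeta R}$, and since $N = \nu e^{\zeta R/2}$, using $\binom{N}{3} \asymp N^3$ cancels one of the two $e^{-\zeta R/2}$ factors, leaving
\[
\mathbb{E}(\widehat{T}) \asymp N \int_{[0,R/2-\omega(N)]^3} e^{\zeta(t_u+t_v)/2+\zeta t_w}\, g_{t_u,t_v,t_w}(\beta,\zeta)\, e^{-\alpha(t_u+t_v+t_w)}\, dt_u\, dt_v\, dt_w.
\]
For $\beta > 1$, the function $g_{t_u,t_v,t_w}(\beta,\zeta)$ decays like $e^{-\zeta(2t_w - t_u - t_v)/2}$ when $t_w$ dominates (this needs to be verified by splitting the $z_1, z_2$ integrals into regimes where each $z_i$ is below or above the implicit Fermi--Dirac cutoff). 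Substituting this decay back, the exponent becomes $(\zeta-\alpha)(t_u+t_v) - \alpha t_w$, which after integration yields a constant when $\zeta/\alpha < 1$ (giving $\mathbb{E}(\widehat{T}) \asymp N$), and is dominated by the corresponding exponent $2(\zeta-\alpha)t - \alpha t$ in $\mathbb{E}(\widehat{\Lambda})$ when $\zeta/\alpha \geq 1$ by one factor of $e^{-\alpha t_w}$. Matching against Proposition~\ref{prop:incomplete_triangles} then gives $\mathbb{E}(\widehat{T}) \asymp \mathbb{E}(\widehat{\Lambda})$ when $\zeta/\alpha < 1$, while $\mathbb{E}(\widehat{T}) = o(\mathbb{E}(\widehat{\Lambda}))$ when $\zeta/\alpha \geq 1$, since the ratio includes an extra factor $R^{-1}$ (equality case) or $e^{-(\zeta-\alpha)\omega(N)}$ (strict case).

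For $\beta \leq 1$, the integrals defining $g_{t_u,t_v,t_w}$ diverge at infinity and must be truncated by $R$, which introduces polylogarithmic factors in $R$ into the estimate. The analogous computation shows that the triangle count picks up fewer $R$-factors than the incomplete triangle count does (each of the two ``legs'' in $\widehat{\Lambda}$ contributes an unbounded $R$-factor, while the third edge in $\widehat{T}$ is constrained by the first two and contributes no new divergence), giving $\mathbb{E}(\widehat{T}) = o(\mathbb{E}(\widehat{\Lambda}))$. The main obstacle in all cases is the careful control of the relative angle $\theta_{uv}$ once $\theta_{uw}$ and $\theta_{vw}$ are fixed: on the circle one has $\theta_{uv} = |\theta_{uw} \pm \theta_{vw}|$, and one must check that Lemma~\ref{lem:relAngle} applies uniformly in both sign choices and that the integrand on the ``wrong'' side of the circle contributes only lower-order error. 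Managing this bookkeeping, together with the regime-by-regime estimation of $g_{t_u,t_v,t_w}$, is the technical heart of the argument.
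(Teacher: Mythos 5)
Your overall strategy---conditioning on the types, rescaling angles via Lemma~\ref{lem:relAngle} to arrive at the $g$-function, then comparing exponents against $\E(\widehat\Lambda)$---is indeed the paper's framework, but two of your steps do not hold. The claimed two-sided decay $g_{t_u,t_v,t_w}\asymp e^{-\zeta(2t_w-t_u-t_v)/2}$ when $t_w$ is largest is only a lower bound: writing $a=e^{\zeta(t_w-t_u)/2}$, $b=e^{\zeta(t_w-t_v)/2}\geq1$, the region $z_1\in[1/a,1]$, $z_2\in[1/b,1]$ already contributes $\asymp\min(a,b)^{2-\beta}/(ab)$ to $g$, so for $1<\beta<2$ the function $g$ decays strictly \emph{slower} than $1/(ab)$. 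Since $\zeta/\alpha\geq1$ is exactly the case where an upper bound on $\E(\widehat T)$ is required, this is a genuine gap. The paper never asserts such two-sided decay: for $\zeta/\alpha<1$ it uses only the lower bound $g\gtrsim1/(ab)$ on $\{t_u,t_v<t_w\}$ to get $\E(\widehat T)\gtrsim N$, with the matching upper bound coming for free from $3\widehat T\leq\widehat\Lambda$; for $\zeta/\alpha\geq1$ it splits the type-cube into $D_t^{(1)}=\{t_u,t_v>t_w\}$, $D_t^{(2)}=\{t_u\leq t_w\}$, $D_t^{(3)}=\{t_v\leq t_w\}$---not your ``$t_w$ largest'' sub-domain---and on each piece bounds the $z$-integral by simply dropping one or two of the three Fermi--Dirac factors, a weaker per-point estimate that is easy to justify and still integrates to $o(\E(\widehat\Lambda))$.

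The same-side/opposite-side dichotomy you flag at the end is also structural rather than a bookkeeping detail: in the same-side configuration $\theta_{u,v}=|\theta_{u,w}-\theta_{v,w}|$, the third factor acquires a near-singularity along $C_{w,v}z_1=C_{w,u}z_2$, whose contribution can dominate the opposite-side one. The paper sidesteps this by introducing $\widehat T'$, which only counts ordered triples with $\vartheta_{u,w}\geq0\geq\vartheta_{v,w}$, and by proving $\widehat T\leq\widehat T'\leq3\widehat T$, so that only the opposite-side integral need ever be estimated. Finally, for $\beta\leq1$ the paper does not argue by counting ``$R$-factors'': Sections~\ref{sect_proof=1} and~\ref{sect_proof<1} proceed by a further sign-based decomposition of the type-domain in terms of $C_{w,u}-1$, $C_{w,v}-1$ (for $\beta=1$) and by the substitution $y=C_{w,v}z_1+C_{w,u}z_2$ (for $\beta<1$); these computations would have to be supplied to close Proposition~\ref{prop:complete_triangles}(ii).
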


The following proposition states that the random variables $\widehat{\Lambda}$ and $\widehat{T}$ are concentrated around their
expected values $\E (\widehat{\Lambda })$ and $\E (\widehat{T })$, respectively.
\begin{Proposition}\label{prop_concentration}
Let $0<\zeta/\alpha<2$. Then for all $\beta >0$
\[
\widehat{\Lambda }=\E (\widehat{\Lambda }) \bigl ( 1+o_p(1)\bigr ) \]
and for $0 < \zeta /\alpha < 1$ and $\beta >1$
\[\widehat{T }=\E (\widehat{T }) \bigl ( 1+o_p(1)\bigr ).
\]
\end{Proposition}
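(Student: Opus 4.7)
My plan is to establish both concentration claims via the second moment method. Propositions~\ref{prop:incomplete_triangles} and~\ref{prop:complete_triangles} guarantee that $\E(\widehat{\Lambda}) \to \infty$ for every $\beta > 0$ and every $0 < \zeta/\alpha < 2$, and that $\E(\widehat{T}) \to \infty$ in the regime $\beta > 1$, $0 < \zeta/\alpha < 1$. It therefore suffices to prove
\[
\mathrm{Var}(\widehat{\Lambda}) = o\bigl( (\E\widehat{\Lambda})^2 \bigr) \qquad \text{and} \qquad \mathrm{Var}(\widehat{T}) = o\bigl( (\E\widehat{T})^2 \bigr)
\]
(the second only in the stated regime), after which Chebyshev's inequality delivers $\widehat{\Lambda}/\E\widehat{\Lambda} \to 1$ and $\widehat{T}/\E\widehat{T} \to 1$ in probability.

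I would write $\widehat{\Lambda} = \sum_{\pi} Y_\pi$, where $\pi = (u,v,w)$ ranges over ordered triples of distinct vertices of $\V$ and $Y_\pi$ indicates that $u,v,w$ are typical and that both $uv$ and $vw$ are edges; and similarly $\widehat{T} = \sum_\sigma Z_\sigma$ over unordered typical triples. Expanding $\mathrm{Var}(\widehat{\Lambda}) = \sum_{\pi,\pi'} \mathrm{Cov}(Y_\pi,Y_{\pi'})$, I would classify the pairs by $k := |V(\pi) \cap V(\pi')| \in \{0,1,2,3\}$ and, within each $k$, by the precise pattern of shared vertices and shared edges. The case $k=0$ contributes nothing, since positions and the relevant edge indicators are independent across disjoint triples; the case $k=3$ contributes at most $\sum_\pi \E Y_\pi = \E\widehat{\Lambda} = o((\E\widehat{\Lambda})^2)$, as $\E\widehat{\Lambda}\to\infty$. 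The same dichotomy handles the two extreme values of $k$ for $\widehat{T}$.

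The substantive work is in $k = 1$ and $k = 2$. For each sub-pattern I would express $\E(Y_\pi Y_{\pi'})$ as an integral over the $6-k$ positions involved of a product consisting of one Fermi--Dirac factor $p_{u,v}$ per distinct edge of $\pi \cup \pi'$, weighted by the densities $\rho_N$. Using Lemma~\ref{lem:relAngle} to encode each edge as a constraint $\theta_{u,v} \lesssim e^{(\zeta/2)(t_u+t_v-R)}$ on the associated relative angle, these integrals factorise over the relative angles $\phi_e = \theta_u - \theta_v$ assigned to the distinct edges (which are uniform and independent once a global rotation is fixed), so each edge contributes an angular factor $\asymp e^{(\zeta/2)(t_u+t_v-R)}$. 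Multiplying by the combinatorial count of pairs of the given pattern (a power of $N$ determined by $6-k$) and checking the outcome against $(\E\widehat{\Lambda})^2$, or $(\E\widehat{T})^2$, is a bookkeeping task whose building blocks already appear in the proof of Proposition~\ref{prop:incomplete_triangles}.

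The main obstacle is the $k=2$ sub-case for $\widehat{\Lambda}$ in which the two paths share their middle vertex $v$, i.e.\ $\pi = (u,v,w)$ and $\pi' = (u,v,w')$: here $v$ is incident to all three distinct edges $uv, vw, vw'$, so after integrating out the free endpoints $v$ contributes a factor proportional to $e^{(3\zeta/2) t_v}$ rather than the $e^{(\zeta/2) t_v}$ that appears in the first moment. Against the type density $e^{-\alpha t_v}$ over the typical range $t_v \leq R/2 - \omega(N)$, this behaves like $e^{(3\zeta/2-\alpha)t_v}$; the hypothesis $\zeta/\alpha < 2$ keeps the resulting contribution bounded by $(\E\widehat{\Lambda})^2$ times a polynomial-in-$N$ deficit that is absorbed by the $\omega(N)$-cushion in the definition of ``typical''. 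For $\widehat{T}$ the analogous bottleneck is the shared-edge sub-case between two triangles $\{u,v,w\}$ and $\{u,v,w'\}$, where both $t_u$ and $t_v$ appear with enhanced exponents; the stronger hypothesis $\zeta/\alpha < 1$ plays the same role in controlling those integrals. Once these borderline sub-patterns are handled, summation over all remaining patterns yields the desired variance estimates.
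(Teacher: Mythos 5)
Your outline matches the paper's: second moment plus Chebyshev, with a case analysis over the overlap pattern of the two paths (resp.\ triangles), handling the disjoint and identical overlaps trivially and bounding the in-between cases by conditional integrals. Propositions~\ref{prop:incomplete_triangles} and~\ref{prop:complete_triangles} do supply the divergence of the first moments, so the strategy closes \emph{if} those overlapping cases are correctly controlled. The genuine gap is in the central computational assertion. You claim that, conditioned on types, the joint probability of all distinct edges of $\pi\cup\pi'$ factorizes over relative angles, one factor $\asymp e^{(\zeta/2)(t_a+t_b-R)}$ per edge $\{a,b\}$. That is valid only when $\pi\cup\pi'$ is a forest: whenever the union contains a cycle the relative angles on the edges are linearly dependent (they sum to zero around each cycle) and there are only $|V(\pi\cup\pi')|-1$ free angular coordinates. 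For such unions the full product \emph{under}estimates the true probability (conditioning on two sides of a triangle essentially forces the third), so it would silently deliver an invalid variance bound. This is exactly what happens in the configurations you single out: for $\widehat{T}$ the shared-edge pair of triangles has union $K_4-e$, which has several cycles; for $\widehat{\Lambda}$ the two-paths-sharing-both-endpoints pattern is a $4$-cycle. The paper's proof deals with this by replacing each cyclic union with a spanning tree before estimating (dropping edges gives a valid upper bound) and then invoking a general tree-embedding estimate (Claim~\ref{claim:embeddings}); for $\widehat{T}$ it avoids cyclic unions altogether by bounding every rooted-triangle event by the pivoted incomplete-triangle event, so that $\E(\widehat{T}_i)\le\E(\widehat{\Lambda}_i)$ and the path analysis carries over. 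You would need one of these reductions before your ``bookkeeping'' step is sound.

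A secondary point: the sub-case you flag as the main obstacle for $\mathrm{Var}(\widehat{\Lambda})$ — two paths sharing the middle vertex and one endpoint, a $3$-edge star on $4$ vertices — is not the tight one. The binding configuration near $\zeta/\alpha=1$, where the $\omega(N)$-cushion actually matters, is two paths sharing \emph{only} the middle vertex: a $4$-edge star on $5$ vertices (the paper's Case~3). For $\beta>1$ the former contributes $\asymp N^{5/2-\alpha/\zeta}$ and the latter $\asymp N^{3-\alpha/\zeta}$, and it is the latter that reaches the order $(\E\widehat{\Lambda})^2\asymp N^2$ as $\zeta/\alpha\to 1$. You would discover this once the bookkeeping is carried out in full, but as stated your attribution of where $\zeta/\alpha<2$ and the $\omega(N)$-cushion come into play is inaccurate.
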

The next result deals with the number of \emph{atypical} triangles. 
Note that, since each triangle contains three incomplete triangles, we always have $\widetilde{T} \leq \widetilde{\Lambda}/3$.
\begin{Proposition}\label{prop_asymptotic_atypical_triangles}
For $0<\zeta/\alpha<2$ and any $\beta > 0$ we have:
\medskip 

\noindent
if $\zeta /\alpha < 1$, then
\[
\widetilde{T} \leq {1\over 3}\widetilde{\Lambda} = o_p(1);
\]
{ thus, a.a.s.\ we have $ \widetilde{T}=\widetilde{\Lambda}=0$.}

If $\zeta /\alpha \geq 1$, then
\[ \widetilde{T} = o_p(\E (\widehat \Lambda )). \]
\end{Proposition}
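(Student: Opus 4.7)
The plan is to derive both statements from first-moment/Markov bounds, splitting according to whether $\zeta/\alpha<1$ or $\zeta/\alpha\ge 1$.

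\medskip

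\noindent\textbf{Regime $\zeta/\alpha<1$.} The strategy is to show that a.a.s.\ there are no atypical vertices at all, which instantly forces $\widetilde T=\widetilde\Lambda=0$. From~\eqref{eq:pdf} one has $\rho_N(R-t)\lesssim \alpha e^{-\alpha t}$ uniformly on $[0,R]$, so the probability that a single vertex has $t_v>R/2-\omega(N)$ is $\lesssim e^{-\alpha(R/2-\omega(N))}$. Multiplying by $N=\nu e^{\zeta R/2}$, the expected number of atypical vertices is bounded by $\nu\, e^{(\zeta-\alpha)R/2+\alpha\omega(N)}$. Since $\alpha>\zeta$ and $\omega(N)=\log\log\log N$ while $R\asymp\log N$, this exponent is $\sim -c\log N$ and so tends to $0$. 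Markov's inequality yields the claim.

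\medskip

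\noindent\textbf{Regime $\zeta/\alpha\ge 1$.} Here atypical vertices are present with positive probability, so one must estimate $\E(\widetilde T)$ directly and compare with $\E(\widehat\Lambda)$ given by Proposition~\ref{prop:incomplete_triangles}. I would decompose $\widetilde T=\widetilde T_1+\widetilde T_2+\widetilde T_3$ according to the number of atypical vertices in the triple, and write each summand as
\[
\E(\widetilde T_k)=\binom{N}{3}\int_{A_k}\phi(t_u,t_v,t_w)\,\rho_N(R-t_u)\rho_N(R-t_v)\rho_N(R-t_w)\,dt_u\,dt_v\,dt_w,
\]
where $A_k$ is the set of type profiles with exactly $k$ coordinates exceeding $R/2-\omega(N)$ and $\phi(t_u,t_v,t_w)$ is the triangle probability given the types (obtained by averaging the product $p_{u,v}p_{u,w}p_{v,w}$ over the relative angles). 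To bound $\phi$ one combines Lemma~\ref{lem:relAngle} with $p_{u,v}\le\min(1,e^{-\beta\zeta(d(u,v)-R)/2})$ to obtain the scaling $\phi(t_u,t_v,t_w)\lesssim N^{-2}\,e^{(\zeta/2)(t_u+t_v+t_w)+(\zeta/2)t_{\mathrm{mid}}}\,h(\beta,\zeta)$ (where $t_{\mathrm{mid}}$ denotes the middle of the three types and $h$ is a factor depending on $\beta,\zeta$, picking up logarithmic/$R$ contributions when $\beta\le 1$, exactly in parallel with the calculations used for $\E(\widehat T)$ in the typical region). Carrying out the type integrations against $\rho_N\sim\alpha e^{-\alpha t}$, the indicator of $A_k$ forces at least one factor $\int_{R/2-\omega(N)}^{R}\!\!e^{(\zeta/2-\alpha)t+\cdots}\,dt$ which, relative to the analogous unconstrained integral appearing in $\E(\widehat\Lambda)$, produces a vanishing gain. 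Checking all subcases of $(\zeta/\alpha,\beta)$ against~\eqref{expected_D}–\eqref{expected_D<1} confirms $\E(\widetilde T)=o(\E(\widehat\Lambda))$, and Markov completes the proof.

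\medskip

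\noindent\textbf{Main obstacle.} The delicate point is bounding $\phi$ uniformly when some of the types are very large and the hypothesis $t_u+t_v<(1-|\delta|)R-c_0$ of Lemma~\ref{lem:relAngle} breaks down (in particular, when two of the vertices are so close to the origin that their hyperbolic disks of radius $R$ overlap automatically). In that range the angular integration can no longer be read off from Lemma~\ref{lem:relAngle}, and one must argue directly from the hyperbolic law of cosines (Fact~\ref{Fact_I}) to avoid wasting a polynomial factor. A secondary bookkeeping issue is the $\beta\le 1$ case, where the integrand $(z^\beta+1)^{-1}$ defining $g_{t_u,t_v,t_w}$ produces logarithmic (and eventually $R$) corrections that must be matched against the $R$-factors already present in $\E(\widehat\Lambda)$ in that regime.
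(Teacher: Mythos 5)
Your argument for the regime $\zeta/\alpha<1$ is essentially the paper's: both show that a.a.s.\ no atypical vertex exists (the paper invokes Corollary~\ref{cor:x0}, you re-derive the same first-moment bound on the number of vertices with type exceeding $R/2-\omega(N)$), and conclude $\widetilde\Lambda=0$ a.a.s.

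For $\zeta/\alpha\ge 1$ there is a genuine gap. Your plan requires a uniform upper bound on the full conditional triangle probability $\phi(t_u,t_v,t_w)$, and you correctly flag that this breaks down when types are large — when, say, $t_u+t_w$ is close to $R$ the hypotheses of Lemma~\ref{lem:relAngle} and Fact~\ref{Fact_I} fail, $p_{u,w}$ becomes bounded away from $0$ uniformly in the angle, and the $N^{-2}$-type scaling you propose is wrong. You suggest going back to the hyperbolic law of cosines to patch this, but you neither carry that out nor explain why it would yield a bound tight enough. In fact this patch is unnecessary: the paper never estimates the triangle probability in the atypical region at all. The key observation you missed is the elementary inequality $\mathbf{1}_{\Delta_a(u,v,w)}\le \mathbf{1}_{\Lambda_a(u,v;w)}$, i.e.\ a triangle is bounded by the two-edge path through a pivot, and conditional on types the two edge events are independent and hence factor. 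This reduces everything to products of two edge probabilities. Moreover, in precisely the sub-cases where Lemma~\ref{lemma_2.4_evolution} does not apply (say $t_u+t_w>R-\omega(N)$), the paper just replaces the offending edge probability by the trivial bound $1$; the remaining density factor $e^{-\alpha(t_u+t_v+t_w)}$ is already small enough on that sub-domain to give $o(\E(\widehat\Lambda))$ after integration. So the obstacle you diagnose is real under your approach, but the right move is to avoid estimating the third edge and to use the trivial edge bound in the bad region, not to sharpen the angular analysis.
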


\subsection{Proof of Theorem \ref{thm:globalclustering}}
We begin with the case where $0<\zeta/\alpha < 1$ and $\beta > 1$.
From Proposition~\ref{prop_concentration}, it follows that $\widehat{T}$ and $\widehat{\Lambda}$ are concentrated around their expected
values $\E (\widehat{T})$ and $\E (\widehat{\Lambda} )$, respectively.
Also, the first part of Proposition~\ref{prop_asymptotic_atypical_triangles} implies that
$\widetilde{T}, \widetilde{\Lambda } = o_p (1)$. Thus, when $0<\zeta/\alpha< 1 $ and
$\beta > 1$ we have
\[
{T \over \widehat{T}} = 1 + o_p(1) \quad \textnormal{and}\quad {\Lambda \over \widehat{\Lambda}} = 1 + o_p(1).
\]
Now, the first part of the theorem follows from~(\ref{expected_T}). The value of $L_\infty(\beta, \zeta,\alpha)$ will be deduced in 
Section~\ref{sec:proof_of_theorem_2}.

For $\zeta /\alpha \geq 1$ and $\beta>1$, the statement of the theorem follows from the fact that $\widehat{T} + \widetilde{T} = 
o_p (\E (\widehat{\Lambda}))$ (cf.~(\ref{expected_T}) and Proposition~\ref{prop_asymptotic_atypical_triangles})
together with the fact that $\widehat{\Lambda} = \E (\widehat{\Lambda}) (1+o_p(1))$ (cf. Proposition~\ref{prop_concentration}).
The above argument also works for $\beta \leq 1$.

\section{Preliminary results}\label{sect:prel_results}
For two vertices $u,v$ of types $t_u$ and $t_v$, respectively, we define
\begin{equation}\label{def_A}
A_{u,v}:= {N \over \nu}~e^{-\frac{\zeta}{2}(t_u+t_v)}{ =e^{-\frac{\zeta}{2}(R-t_u-t_v)}}.
\end{equation}
For two vertices $u,v$ we write $u\sim v$ to indicate that $u$ is adjacent to $v$. 
The following lemma is a special case of Lemma 2.4 in~\cite{ar:Foun13+}.
\begin{Lemma} \label{lemma_2.4_evolution}
Let $\beta >0$. There exists a constant $C_{\beta}>0$ such that uniformly for all distinct pairs $u,v \in \V$ such that
{ $t_u+ t_v < R - 2\omega (N)$} we have
$$ \P \left( u \sim v \mid t_u,t_v \right) =
\left \{
\begin{array}{ll}
(1+o(1)){C_{\beta}\over A_{u,v}}~, & \mbox{if $\beta>1 $} \\[1.5ex]
(1+o(1)){C_\beta \ln A_{u,v} \over A_{u,v}},  & \mbox{if $\beta = 1 $} \\[1.5ex]
(1+o(1)){C_\beta \over A_{u,v}^{\beta}}, & \mbox{if $\beta < 1 $}
\end{array}
\right ..$$
In particular,
$$ C_\beta =
\left \{
\begin{array}{ll}
{2\over \beta}~\sin^{-1} \left( {\pi \over \beta}\right), & \mbox{if $\beta >1 $} \\[1.1ex]
{2 \over \pi}, & \mbox{if $\beta = 1 $} \\ [1.05ex]
{1 \over \sqrt{\pi}}~{ \Gamma \left({1-\beta \over 2} \right) \over \Gamma\left(1 -{\beta \over 2} \right)}, & \mbox{if $\beta < 1$}
\end{array}
\right .$$
\end{Lemma}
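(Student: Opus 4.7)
The plan is to reduce the probability $\mathbb{P}(u \sim v \mid t_u, t_v)$ to a one-dimensional integral over the relative angle $\theta_{u,v}$, then perform sharp asymptotic analysis in three cases according to the value of $\beta$. The starting point is the hyperbolic law of cosines (Fact~\ref{Fact_I}), which gives $\cosh(\zeta d(u,v)) = \cosh(\zeta r_u)\cosh(\zeta r_v) - \sinh(\zeta r_u)\sinh(\zeta r_v)\cos\theta_{u,v}$. Under the hypothesis $t_u + t_v < R - 2\omega(N)$, both $r_u$ and $r_v$ are large, so I can replace each $\cosh$ and $\sinh$ by $\tfrac12 e^{\zeta r}$ with multiplicative error $1+O(e^{-2\zeta r})$, provided $\theta_{u,v}$ is not so small that $1-\cos\theta_{u,v}$ is swamped. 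This yields the asymptotic expansion $d(u,v) = r_u + r_v + \tfrac{2}{\zeta}\ln\sin(\theta_{u,v}/2) + o(1)$ uniformly for $\theta_{u,v} \gtrsim e^{-\zeta R/2}$, and hence $\tfrac{\zeta}{2}(d(u,v) - R) = \ln(A_{u,v}\sin(\theta_{u,v}/2)) + o(1)$. The tiny range $\theta_{u,v} \lesssim e^{-\zeta R/2}$ contributes at most $O(e^{-\zeta R/2}) = O(N^{-1})$ to the probability, which will be dominated by the main terms in every case.

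Since $\theta_{u,v}$ is uniform on $(0,\pi]$ with density $1/\pi$, substituting the expansion into the binomial connection probability and then setting $\phi = \theta_{u,v}/2$ gives
\[
\mathbb{P}(u \sim v \mid t_u,t_v) = (1+o(1))\,\frac{2}{\pi}\int_{0}^{\pi/2} \frac{d\phi}{(A_{u,v}\sin\phi)^{\beta}+1} + O(N^{-1}).
\]
The point is now to evaluate this integral asymptotically as $A_{u,v}\to\infty$ (which is ensured by $t_u+t_v < R-2\omega(N)$, giving $A_{u,v}\geq e^{\zeta\omega(N)}$). I will verify that the $o(1)$ inside the integrand is uniform in $\phi$ on the range that matters, so it can be pulled outside.

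In the three regimes the integral behaves differently. For $\beta > 1$ the integrand is concentrated on $\phi \lesssim A_{u,v}^{-1}$; substituting $y = A_{u,v}\phi$ (using $\sin\phi = \phi(1+o(1))$ on this range) yields $\tfrac{2}{\pi A_{u,v}}\int_0^{\pi A_{u,v}/2}\!\tfrac{dy}{y^\beta+1}$, which tends to $\tfrac{2}{\pi A_{u,v}}\int_0^{\infty}\!\tfrac{dy}{y^\beta+1} = \tfrac{2}{\beta\sin(\pi/\beta)A_{u,v}}$ by the standard contour evaluation $\int_0^\infty dy/(y^\beta+1) = \pi/(\beta\sin(\pi/\beta))$. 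For $\beta = 1$ the same substitution produces $\int_0^{\pi A/2} dy/(y+1) = \ln A_{u,v} + O(1)$, giving the logarithmic factor. For $\beta < 1$ the integral diverges at large $y$ in the naive substitution, reflecting that the dominant contribution now comes from $\phi$ of order $1$, where $(A_{u,v}\sin\phi)^\beta \gg 1$ so the integrand is well approximated by $(A_{u,v}\sin\phi)^{-\beta}$; this yields $\tfrac{2}{\pi}A_{u,v}^{-\beta}\int_0^{\pi/2}\sin^{-\beta}\phi\,d\phi = \tfrac{1}{\sqrt\pi}\,\tfrac{\Gamma((1-\beta)/2)}{\Gamma(1-\beta/2)}\,A_{u,v}^{-\beta}$ via the Beta function identity, with the small-$\phi$ region contributing only $O(A_{u,v}^{-1}) = o(A_{u,v}^{-\beta})$.

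The main obstacle will be bookkeeping the error terms: showing that the $o(1)$ approximation of $\tfrac{\zeta}{2}(d(u,v)-R)$ is uniform in $\phi$ on the dominant range, and proving that the excluded tails (both $\phi$ near $0$ where the geometric expansion degrades, and $\phi$ near $\pi/2$ where a different cutoff is needed for $\beta < 1$) contribute lower-order terms. Since the statement is indicated to be a special case of Lemma~2.4 in~\cite{ar:Foun13+}, I expect the cleanest execution is simply to invoke that lemma after verifying its hypotheses, with the sketch above serving to explain why the three regimes produce three different constants $C_\beta$.
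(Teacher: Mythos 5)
The paper itself does not prove this lemma; it simply cites it as ``a special case of Lemma~2.4 in~\cite{ar:Foun13+}''. Your proposal instead reconstructs the full derivation. So you take a genuinely different route from the paper: where the paper outsources the statement, you actually prove it. Your reconstruction is essentially correct. The reduction via Fact~\ref{Fact_I} to $\frac{\zeta}{2}(d(u,v)-R)=\ln(A_{u,v}\sin(\theta_{u,v}/2))+o(1)$, the passage to the angular integral $\frac{2}{\pi}\int_0^{\pi/2}\frac{d\phi}{(A_{u,v}\sin\phi)^\beta+1}$, and the three-regime asymptotics all check out. In particular $\int_0^\infty\frac{dy}{y^\beta+1}=\frac{\pi}{\beta\sin(\pi/\beta)}$ for $\beta>1$ and $\int_0^{\pi/2}\sin^{-\beta}\phi\,d\phi=\frac{\sqrt\pi}{2}\frac{\Gamma\left(\frac{1-\beta}{2}\right)}{\Gamma\left(1-\frac{\beta}{2}\right)}$ for $\beta<1$ recover exactly the $C_\beta$ stated in the lemma (with $\sin^{-1}$ read as a reciprocal, consistent with the $C_\beta=\frac{2}{\beta\sin(\pi/\beta)}$ that appears in Theorem~\ref{thm:globalclustering}), and the $\beta=1$ substitution gives $\frac{2}{\pi A_{u,v}}\ln A_{u,v}$ as claimed. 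One small caveat worth noting: Fact~\ref{Fact_I} is stated under $t_u,t_v<R/2-\omega(N)$, while the lemma's hypothesis is the weaker $t_u+t_v<R-2\omega(N)$, so a self-contained proof would need to extend the uniformity of the distance expansion to that regime (the cited reference handles this). Given that, your closing remark -- that the cleanest execution is simply to invoke the cited lemma after verifying hypotheses -- is exactly what the paper does, and your sketch is a sound explanation of where the three constants come from.
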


We will need the following fact, which expresses the hyperbolic distance between two points of given types as a function of
their~\emph{relative} angle.
For two points $u$ and $v$ in $\D$, we denote by $\theta_{u,v} \in [0,\pi]$ the relative angle between $u$ and $v$ (with the center of $\D$
being the reference point).
\begin{Lemma} \label{lem:probs}
Let $u,v \in \D$ be two distinct points of types $t_u$ and $t_v$, respectively.
Moreover, set
\begin{equation}\label{theta_c}
\bar{\theta}_{u,v}:=\left (e^{-2\zeta (R-t_u)}+e^{-2\zeta (R-t_v)}\right ) ^{1/2}=\nu ^2 \left \{ \frac{e^{2\zeta t_u}}{N^4}+\frac{e^{2\zeta t_v}}{N^4} \right \}^{1/2}
\end{equation}
and assume that $\theta_{u,v} \gg \bar{\theta}_{u,v}$. Then
$$p_{u,v}= \frac{1}{C\, A^\beta_{u,v}\sin^\beta (\theta_{u,v}/2)+1},$$
where $C=1+o(1)$, uniformly for any $t_u, t_v < R/2 - \omega (N)$.
\end{Lemma}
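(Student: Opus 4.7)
The plan is to derive the stated expression from the hyperbolic law of cosines. For two points $u, v \in \mathcal{D}_R$ of radii $r_u = R - t_u$ and $r_v = R - t_v$ and relative angle $\theta_{u,v}$, we have
\[
\cosh(\zeta d(u,v)) = \cosh(\zeta r_u)\cosh(\zeta r_v) - \sinh(\zeta r_u)\sinh(\zeta r_v)\cos\theta_{u,v}.
\]
Expanding $\cosh$ and $\sinh$ via exponentials and applying $1-\cos\theta = 2\sin^2(\theta/2)$ and $1+\cos\theta = 2\cos^2(\theta/2)$, I would obtain the clean decomposition
\[
\cosh(\zeta d(u,v)) = \tfrac{1}{2}e^{\zeta(r_u + r_v)}\sin^2(\theta_{u,v}/2) + \cosh(\zeta(r_u-r_v))\cos^2(\theta_{u,v}/2) + \tfrac{1}{2}e^{-\zeta(r_u + r_v)}\sin^2(\theta_{u,v}/2),
\]
in which the first term is the main contribution.

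Next I would quantify the error. Bounding $\cosh(\zeta(r_u-r_v)) \le e^{\zeta|r_u - r_v|}$ and $\cos^2(\theta_{u,v}/2)\le 1$, the ratio of the second summand to the first is at most
\[
\frac{2 e^{-2\zeta\min(r_u,r_v)}}{\sin^2(\theta_{u,v}/2)} \le \frac{4\,\bar{\theta}_{u,v}^{\,2}}{\sin^2(\theta_{u,v}/2)},
\]
since $\bar{\theta}_{u,v}^{\,2} \ge \tfrac12 e^{-2\zeta\min(r_u,r_v)}$. Because $\sin(\theta/2) \ge \theta/\pi$ on $[0,\pi]$, the hypothesis $\theta_{u,v}\gg \bar{\theta}_{u,v}$ forces this ratio to be $o(1)$ uniformly in $t_u, t_v < R/2 - \omega(N)$; the third summand is controlled analogously. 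Thus
\[
\cosh(\zeta d(u,v)) = \tfrac{1}{2}e^{\zeta(r_u+r_v)}\sin^2(\theta_{u,v}/2)\,(1+o(1)).
\]
Moreover this is $\gg \cosh(\zeta(r_u-r_v))\ge 1$, hence $\zeta d(u,v)\to\infty$, which justifies replacing $\cosh(\zeta d(u,v))$ by $\tfrac{1}{2}e^{\zeta d(u,v)}(1+o(1))$. Taking logarithms gives
\[
\zeta d(u,v) = \zeta(r_u+r_v) + 2\log\sin(\theta_{u,v}/2) + o(1).
\]

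To conclude, I would substitute $r_u + r_v = 2R - t_u - t_v$ to get
\[
\tfrac{\beta\zeta}{2}(d(u,v)-R) = \tfrac{\beta\zeta}{2}(R - t_u - t_v) + \beta\log\sin(\theta_{u,v}/2) + o(1),
\]
so that, using $A_{u,v} = e^{\zeta(R - t_u - t_v)/2}$,
\[
\exp\!\left(\tfrac{\beta\zeta}{2}(d(u,v)-R)\right) = (1+o(1))\, A_{u,v}^{\beta}\sin^{\beta}(\theta_{u,v}/2).
\]
Plugging into $p_{u,v} = 1/(\exp(\beta\zeta(d(u,v)-R)/2)+1)$ and absorbing the $(1+o(1))$ factor into the constant $C$ yields the claim.

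The main obstacle is not any single computation but the verification that the error estimates are \emph{uniform} in $t_u, t_v < R/2 - \omega(N)$ and in the range of admissible $\theta_{u,v}$: the comparison $\theta_{u,v}\gg \bar{\theta}_{u,v}$ has to be exploited carefully to control simultaneously the lower-order terms in the law of cosines and the $\cosh \approx \tfrac12 e^{\,\cdot\,}$ approximation, with $\omega(N)\to\infty$ providing the slack that makes the $o(1)$ uniform.
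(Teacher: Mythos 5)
Your proof is correct, and it takes a genuinely more self-contained route than the paper. The paper's proof simply quotes Fact~\ref{Fact_I} (Lemma~2.3 of~\cite{ar:Foun13+}), which asserts that
\[
d(u,v)=2R-(t_u+t_v)+\frac{2}{\zeta}\log\sin\left(\frac{\theta_{u,v}}{2}\right)+\Theta\!\left(\left(\frac{\bar\theta_{u,v}}{\theta_{u,v}}\right)^{2}\right),
\]
uniformly for $t_u,t_v<R/2-\omega(N)$, and then exponentiates and substitutes into the definition of $p_{u,v}$ using~\eqref{def_A}. You instead re-derive this distance formula from the hyperbolic law of cosines: your identity $\cosh(\zeta d)=\cosh(\zeta(r_u+r_v))\sin^2(\theta/2)+\cosh(\zeta(r_u-r_v))\cos^2(\theta/2)$ is correct (it follows from the product-to-sum identities), the ratio of the error terms to the main term is indeed controlled by $(\bar\theta_{u,v}/\theta_{u,v})^2$ via the bound $\bar\theta_{u,v}^2\ge e^{-2\zeta\min(r_u,r_v)}$ and $\sin(\theta/2)\ge\theta/\pi$, and the resulting lower bound $\cosh(\zeta d(u,v))\gg e^{\zeta|r_u-r_v|}\ge 1$ legitimizes passing from $\cosh$ to $\tfrac12 e^{(\cdot)}$. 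In effect you have supplied a proof of Fact~\ref{Fact_I} itself (with an $o(1)$ error term in place of the sharper $\Theta((\bar\theta/\theta)^2)$, though your estimates in fact deliver the sharper form), whereas the paper treats it as a black box. The trade-off is that your argument is longer but eliminates an external dependency; it buys nothing new for this particular lemma beyond self-containedness, but it makes the uniformity claim visibly traceable to the hypothesis $\theta_{u,v}\gg\bar\theta_{u,v}$ and to the constraint $t_u,t_v<R/2-\omega(N)$ rather than leaving it implicit in a citation. One small remark: the displayed identity $A_{u,v}=e^{-\frac{\zeta}{2}(R-t_u-t_v)}$ in~\eqref{def_A} has a sign typo; the correct value (which you use) is $A_{u,v}=e^{\frac{\zeta}{2}(R-t_u-t_v)}$, since $N/\nu=e^{\zeta R/2}$.
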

\begin{proof}
We will use the following fact, which relates the hyperbolic distance between two points of given types with their relative angle.
\begin{Fact}[Lemma 2.3,~\cite{ar:Foun13+}] \label{Fact_I}
For every $\beta>0$ and $0<\zeta/\alpha<2$, let $u,v$ be two distinct points of $\mathcal{D}_R$.
If $\bar{\theta}_{u,v}\ll \theta_{u,v}\leq \pi$, then
\begin{equation}\label{d_uv_formula}
d(u,v)=2R-(t_u+t_v)+\frac{2}{\zeta} \log \sin \left (\frac{\theta_{u,v}}{2} \right )+\Theta  \left ( \left ( \frac{\bar{\theta}_{u,v}}{\theta_{u,v}}\right )^2 \right),
\end{equation}
uniformly for all $u,v$ with $t_u,t_v < R/2 - \omega (N)$.
\end{Fact}
Hence, whenever $\theta_{u,v} \gg \bar{\theta}_{u,v}$ we have
\[
e^{\beta \frac{\zeta}{2}(d(u,v)-R)}=C e^{\beta \frac{\zeta}{2}\bigl (R-(t_u+t_v)\bigr )}e^{\beta \log \sin (\theta_{u,v}/2)},
\]
where $C=1+o(1)$, uniformly for any $t_u, t_v < R/2 - \omega (N)$; { note that the error term does depend on $u$ and $v$, 
but is uniformly bounded by a function that is $o(1)$.} 
The lemma follows from the definition of $p_{u,v}$ together with (\ref{def_A}).
\end{proof}

Finally, note that the density function of the type of a vertex $u$ is
\begin{equation}\label{def_bar_rho}
\bar{\rho}_N(t_u):=\alpha \frac{\sinh (\alpha (R-t_u))}{\cosh(\alpha R)-1}.
\end{equation}
We will be using this density quite frequently, as we will often be conditioning on the types of the vertices under consideration.
It is not hard to see that this density can be approximated by an exponential density.
\begin{Claim} \label{clm:density_approx} 
For any vertex $u\in \D$, uniformly for $t_u<R$ we have
\[
\bar{\rho}_N(t_u)\leq \bigl (1+o(1)\bigr )\alpha e^{-\alpha t_u}.
\]
Moreover, uniformly for all $0<t_u\leq \lambda R $, where $0<\lambda<1$ we have
$$ \bar{\rho}_N (t_u) = \bigl (1+o(1)\bigr ) \alpha e^{-\alpha t_u} . $$
\end{Claim}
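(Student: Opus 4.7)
The proof is a direct computation with the definition $\bar{\rho}_N(t_u) = \alpha \sinh(\alpha(R-t_u))/(\cosh(\alpha R)-1)$, using the exponential expansions of $\sinh$ and $\cosh$. I plan to handle both bounds by exposing the dominant exponential and controlling the remainder uniformly in $t_u$.

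First, I rewrite
\[
\frac{\sinh(\alpha(R-t_u))}{\cosh(\alpha R)-1}
= \frac{e^{\alpha(R-t_u)} - e^{-\alpha(R-t_u)}}{e^{\alpha R} + e^{-\alpha R} - 2}
= e^{-\alpha t_u}\cdot \frac{1 - e^{-2\alpha(R-t_u)}}{1 + e^{-2\alpha R} - 2e^{-\alpha R}}.
\]
The denominator does not depend on $t_u$ and equals $1 + o(1)$ as $R \to \infty$ (since $\alpha > 0$ is fixed and $R \to \infty$), so I can absorb it into the $1+o(1)$ factor once and for all.

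For the first, uniform inequality on $0 \leq t_u < R$, I just drop the numerator term $-e^{-2\alpha(R-t_u)} \leq 0$ to get
\[
\bar{\rho}_N(t_u) \leq \alpha e^{-\alpha t_u}\cdot\frac{1}{1 + e^{-2\alpha R} - 2e^{-\alpha R}} = (1+o(1))\,\alpha e^{-\alpha t_u},
\]
with the $o(1)$ depending only on $R$ (not on $t_u$), yielding the claimed uniformity over all $t_u<R$.

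For the matching lower bound under the restriction $t_u \leq \lambda R$ with $\lambda \in (0,1)$ fixed, the point is that now $R - t_u \geq (1-\lambda)R \to \infty$, so
\[
e^{-2\alpha(R-t_u)} \leq e^{-2\alpha(1-\lambda)R} = o(1),
\]
uniformly in $t_u$ in the allowed range. Hence $1 - e^{-2\alpha(R-t_u)} = 1 + o(1)$ uniformly, and combined with the denominator estimate this gives $\bar{\rho}_N(t_u) = (1+o(1))\,\alpha e^{-\alpha t_u}$ uniformly for $0 < t_u \leq \lambda R$. There is no real obstacle here; the only thing to be careful about is that the $o(1)$ remainder depends only on $R$ and on $\lambda$, and not on $t_u$, which is automatic from the monotonicity of $e^{-2\alpha(R-t_u)}$ in $t_u$.
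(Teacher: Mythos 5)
Your proof is correct and takes essentially the same route as the paper (direct expansion of $\sinh$ and $\cosh$, uniform control of the error terms). Your algebraic packaging, factoring out $e^{-\alpha t_u}$ and isolating the two remainder terms explicitly, is in fact slightly cleaner: the paper's derivation writes $2\sinh(\alpha(R-t_u)) = e^{\alpha(R-t_u)}(1+o(1))$ in its upper-bound chain, which is only an inequality uniformly over $t_u < R$ (it fails as an equality when $t_u$ is near $R$), whereas your version makes the inequality step transparent by simply dropping the nonpositive term $-e^{-2\alpha(R-t_u)}$.
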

\begin{proof}
Starting from the definition we get:
\[
\begin{split}
\bar{\rho}_N(t_u) & =\alpha \frac{\sinh (\alpha (R-t_u))}{\cosh(\alpha R)-1}\leq
2\alpha \frac{\sinh (\alpha (R-t_u))}{e^{\alpha R}} (1+o(1))\\
& = \alpha \frac{e^{\alpha (R-t_u)}}{e^{\alpha R}}\bigl (1+o(1)\bigr )= \alpha e^{-\alpha t_u}\bigl (1+o(1)\bigr ).
\end{split}
\]
The lower bound under the condition $t_u\leq \lambda R$ can be proven arguing as follows:
\[
\sinh (\alpha (R-t_u))=\cosh(\alpha (R-t_u))-e^{-\alpha(R-t_u)}= \cosh(\alpha (R-t_u))\bigl ( 1-o(1)\bigr),
\]
uniformly over $t_u\leq \lambda R$.

Furthermore, we have:
\[
\frac{\cosh(\alpha (R-t_u))}{\cosh(\alpha R)-1} \geq \frac{\cosh(\alpha (R-t_u))}{\cosh(\alpha R)}\bigl ( 1+o(1)\bigr ).
\]
Hence,
\[
\begin{split}
\bar{\rho}_N(t_u)
& =\alpha \frac{\sinh(\alpha (R-t_u))}{\cosh(\alpha R)-1} \geq \alpha \frac{\cosh(\alpha (R-t_u))}{\cosh(\alpha R)}\bigl (1+o(1)\bigr )\\
& \geq \alpha \frac{e^{\alpha (R-t_u)}}{e^{\alpha R}}\bigl (1+o(1)\bigr )= \alpha e^{-\alpha t_u}\bigl (1+o(1)\bigr ).
\end{split}
\]
At this point the statement follows.
\end{proof}
Now, Claim~\ref{clm:density_approx} together with the fact that $N\asymp e^{\zeta R/2}$ imply the following.
\begin{Corollary}\label{cor:x0}
For any $\zeta , \alpha > 0$, conditional on $u \in \V$ we have
\[
\P \left (t_u \leq \frac{\zeta}{2\alpha}R + \omega(N) \right )=1-o (N^{-1}).
\]
In particular, a.a.s.\ for all $u \in \V$
$$ t_u \leq \frac{\zeta}{2\alpha}R + \omega(N). $$
\end{Corollary}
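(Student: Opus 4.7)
The plan is to reduce the corollary to an elementary tail estimate on the exponential-type density $\bar\rho_N$ supplied by Claim~\ref{clm:density_approx}, and then close out the ``for all $u \in V_N$'' statement by a union bound over the $N$ vertices.

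First I would write
\[
\P\!\left(t_u > \tfrac{\zeta}{2\alpha}R + \omega(N)\right)
= \int_{\frac{\zeta}{2\alpha}R + \omega(N)}^{R} \bar\rho_N(t)\,dt,
\]
and apply the uniform upper bound $\bar\rho_N(t) \leq (1+o(1))\,\alpha e^{-\alpha t}$ from Claim~\ref{clm:density_approx}. Integrating the exponential gives
\[
\int_{\frac{\zeta}{2\alpha}R + \omega(N)}^{R} \alpha e^{-\alpha t}\,dt
\;\leq\; e^{-\alpha\bigl(\frac{\zeta}{2\alpha}R + \omega(N)\bigr)}
\;=\; e^{-\zeta R/2}\,e^{-\alpha\omega(N)}.
\]
Now I use the relation $N = \nu e^{\zeta R/2}$, i.e.\ $N \asymp e^{\zeta R/2}$, to rewrite the right-hand side as $O(N^{-1}e^{-\alpha\omega(N)})$. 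Since $\omega(N)\to\infty$ and $\alpha>0$, we have $e^{-\alpha\omega(N)} = o(1)$, so the whole quantity is $o(N^{-1})$, establishing the first assertion.

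For the ``in particular'' statement, I would apply the union bound. Since the vertices of $V_N$ are i.i.d., the probability that \emph{some} $u \in V_N$ has $t_u > \frac{\zeta}{2\alpha}R + \omega(N)$ is at most
\[
N \cdot \P\!\left(t_u > \tfrac{\zeta}{2\alpha}R + \omega(N)\right) = N \cdot o(N^{-1}) = o(1),
\]
so a.a.s.\ every vertex has type at most $\frac{\zeta}{2\alpha}R + \omega(N)$.

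There is no real obstacle here: the only point worth noting is that Claim~\ref{clm:density_approx} guarantees the upper bound $\bar\rho_N(t) \leq (1+o(1))\alpha e^{-\alpha t}$ \emph{uniformly for $t_u < R$}, which is precisely what is needed to control the tail integral up to $R$ (the matching lower bound is only stated for $t \leq \lambda R$ with $\lambda<1$, but the lower bound is not needed here). Everything else is a one-line calculation combined with a union bound.
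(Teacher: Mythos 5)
Your proof is correct and is essentially the argument the paper has in mind (the paper simply states that the corollary follows from Claim~\ref{clm:density_approx} together with $N \asymp e^{\zeta R/2}$, without spelling out the tail integral or the union bound). Your write-up fills in exactly those details.
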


\section{On incomplete triangles: proof of Proposition~\ref{prop:incomplete_triangles}}\label{sect:proof_prop_21}

In this section, we calculate the expected number of incomplete triangles with all their vertices having type less than $R/2 - \omega (N)$, that is,
being \emph{typical vertices}. In particular, we give the proofs of (\ref{expected_D}), (\ref{expected_D_1}) and (\ref{expected_D<1}).


Let us introduce the following notation: for every triple of vertices $u,v,w$ such that $t_u,t_v,t_w<R/2-\omega(N) $ (being $\omega(N)$ an arbitrary slowly growing function),
we denote by $\Lambda(u,v;w)$ the event that the triple $u,v,w$ forms an incomplete triangle
\emph{pivoted} at $w$. In other words, $u,v$ and $w$ form a path of length 2 with $w$ being the middle vertex.
{ Note that there are exactly $3{N \choose 3}$ such choices.}

Similarly, for every triple of vertices $u,v,w$ such that $t_u,t_v,t_w<R/2-\omega(N) $, we denote by $\Delta(u,v,w)$ the event that the triple $u,v,w$ forms a triangle.
At this point we introduce the following parameters, which will be used throughout the paper:
\begin{equation}\label{eq:beta_prime}
\beta':=
\left \{
\begin{array}{ll}
1, & \textnormal{ if }\beta\geq 1\\
\beta, & \textnormal{ if }\beta< 1
\end{array}
\right .,
\end{equation}
as well as
\begin{equation}\label{eq:def_delta_1}
\delta=\delta(\beta,1):=
\left \{
\begin{array}{ll}
1 & \textnormal{ if } \beta=1\\
0 & \textnormal{ otherwise}
\end{array}
\right ..
\end{equation}
We start from a simple observation: conditional on the types of $u,v,w$, the presence of the edges $\{uw\} $ and $\{vw\} $ are
independent events. Thereby, we have
\[
\P \bigl ({\Lambda}(u,v;w) \mid t_u,t_v,t_w\bigr )=\P \bigl ( u\sim w \mid t_u, t_w\bigr )\P \bigl ( v\sim w \mid t_v, t_w\bigr ).
\]
{ This implies:
\begin{equation}\label{eq_lambda_general}
\P \bigl ( {\Lambda}(u,v;w) \mid t_u,t_v,t_w\bigr ) \asymp
%
%
\frac{(\ln(A_{u,w})\ln(A_{v,w}))^\delta}{A_{u,w}^{\beta'} A_{v,w}^{\beta'}} =\frac{(\ln(A_{u,w})\ln(A_{v,w}))^\delta \, e^{\beta' \zeta t_w+(\beta' \zeta/2)(t_u+t_v)}}{N^{2\beta'}}.
\end{equation}
Hence, to determine the expected value of $\widehat{\Lambda}$ we need to integrate the above expressions with respect to $t_u, t_v, t_w$ (using the density given by Claim~\ref{clm:density_approx}) and subsequently multiply the outcome by $3{N \choose 3}$.

More precisely, we have
\[
\begin{split}
\P ({\Lambda}(u,v;w)) & \asymp N^{-2\beta'}  \int_0^{R/2-\omega(N)}\int_0^{R/2-\omega(N)} e^{(\beta'\zeta/2-\alpha)t_u}e^{(\beta'\zeta/2-\alpha)t_v} \times \\
& \times \left (\int_0^{R/2-\omega(N)} (R-t_u-t_w)^\delta  (R-t_v-t_w)^\delta  e^{(\beta'\zeta-\alpha)t_w}dt_w \right )dt_udt_v.
%
\end{split}
\]
Now observe that a lower bound for the above integral can be obtained by reducing the domain of integration of $t_u$ and $t_v$ 
up to $R/4$.  Thus, if $t_u,t_v \leq R/4$, we have  $(R-t_u-t_w)^\delta  (R-t_v-t_w)^\delta \geq (3R/4-t_w)^{2\delta}$.
This simplifies the calculations considerably, yielding:
\[
\begin{split}
\P ({\Lambda}(u,v;w)) & \gtrsim  N^{-2\beta'} \left ( \int_0^{R/4} e^{(\beta'\zeta/2-\alpha)t_u}dt_u\right )^2 \int_0^{R/2-\omega(N)} (3R/4-t_w)^{2\delta} e^{(\beta'\zeta-\alpha)t_w}dt_w \\
& \stackrel{t_w < R/2}{\gtrsim}R^{2\delta} N^{-2\beta'} \int_0^{R/2-\omega(N)} e^{(\beta'\zeta-\alpha)t_w}dt_w .
\end{split}
\]
We obtain an upper bound using $(R-t_u-t_w)^\delta  (R-t_v-t_w)^\delta < (R-t_w)^{2\delta}$.
This yields
\[
\begin{split}
\P ({\Lambda}(u,v;w)) & \lesssim  N^{-2\beta'} \left ( \int_0^{R/2-\omega(N)} e^{(\beta'\zeta/2-\alpha)t_u}dt_u\right )^2 \int_0^{R/2-\omega(N)} (R-t_w)^{2\delta} e^{(\beta'\zeta-\alpha)t_w}dt_w \\
& \lesssim R^{2\delta} N^{-2\beta'} \int_0^{R/2-\omega(N)} e^{(\beta'\zeta-\alpha)t_w}dt_w .
\end{split}
\]
}
At this point we need a case distinction according to the value of $\beta'\zeta/\alpha$, leading to
\[
\P  ({\Lambda}(u,v;w)) \asymp
\left \{ \begin{array}{ll}
{ R^{2\delta}}N^{-2\beta'}, & \textnormal{if }\beta'\zeta/\alpha<1\\
{ R^{1+2\delta}} N^{-2\beta'}, & \textnormal{if }\beta'\zeta/\alpha=1\\
{ R^{2\delta}}N^{-\beta'-\alpha/\zeta}e^{-(\beta'\zeta-\alpha)\omega(N)}, & \textnormal{if }\beta'\zeta/\alpha>1
\end{array} \right ..
\]
Multiplying the above expressions by $3{N \choose 3}$, we deduce the statement for $\E(\widehat{\Lambda})$.

\section{Proof of Proposition \ref{prop_concentration}}\label{sect:2nd_moment}
In this section, we show the concentration of the random variables $\widehat{\Lambda}$ and $\widehat{T}$ around
$\E (\widehat{\Lambda}) $ and $\E (\widehat{T})$, respectively.
We will do so by bounding their second moment.
For every triple of distinct vertices $u,v,w$, recall that $\Lambda(u,v;w)$ denotes the event that the vertices $u,v,w$ form an incomplete triangle with $w$ being the pivoting vertex.
{For a triple of distinct vertices $u,v$ and $w$ we denote by $\{u,v;w\}$ the set $\{ \{u,v\},w \}$}. 
%
We have
{ 
\begin{equation} \label{eq:TypLambda_Exp}
\begin{split}
\E (\widehat{\Lambda}^2)
& =\E \left [ \left ( \sum_{\{u,v;w\}}\mathbf{1}_{\{\Lambda(u,v;w)\}}\right )^2\right ]\\
& = \sum_{\{u,v;w\}}\E (\mathbf{1}_{\{\Lambda(u,v;w)\}}^2) + \sum_{\{u_2,v_2;w_2\}\neq \{u_1,v_1;w_1\}}\E (\mathbf{1}_{\{\Lambda(u_1,v_1;w_1)\}} \mathbf{1}_{\{\Lambda(u_2,v_2;w_2)\}})\\
& = \E (\widehat{\Lambda} )+ \sum_{\{u_2,v_2;w_2\}\neq \{u_1,v_1;w_1\}}
\E \left ( \mathbf{1}_{\{\Lambda(u_1,v_1;w_1)\}} \mathbf{1}_{\{\Lambda(u_2,v_2;w_2)\}}\right ),
\end{split}
\end{equation}
where the double sum is taken over pairs of distinct triples of vertices.
}
{Similarly, with $\{ u,v,w \}$ denoting a set of pairwise distinct vertices, we write
\[
\E (\widehat{T}^2)=\E (\widehat{T})+ \sum_{\{u_1,v_1,w_1\} \neq \{u_2,v_2,w_2\}}
\E \left ( \mathbf{1}_{\{\Delta(u_1,v_1,w_1)\}} \mathbf{1}_{\{\Delta(u_2,v_2,w_2)\}}
\right ).
\]
}
In order to show concentration of $\widehat{\Lambda} $, we will show the following statement.
\begin{Lemma}\label{ED2} For any $0 < {\zeta \over \alpha} < 2$ and $\beta >0$ we have
\[
\E (\widehat{\Lambda}^2)=\E^2 (\widehat{\Lambda}) \bigl ( 1+o(1)\bigr ).
\]
\end{Lemma}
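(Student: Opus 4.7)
The plan is to bound $\E(\widehat{\Lambda}^2)$ from above by $\E^2(\widehat{\Lambda})(1+o(1))$; combined with the trivial lower bound $\E(\widehat{\Lambda}^2) \geq \E^2(\widehat{\Lambda})$, this yields the lemma, whence concentration in probability follows from Chebyshev's inequality (using that $\E(\widehat{\Lambda}) \to \infty$ in every regime, as one reads off from Proposition~\ref{prop:incomplete_triangles}).

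Starting from the expansion in~\eqref{eq:TypLambda_Exp}, I would partition the cross-sum according to $k := |\{u_1,v_1,w_1\} \cap \{u_2,v_2,w_2\}| \in \{0,1,2,3\}$, writing it as $S_0 + S_1 + S_2 + S_3$. The linear $\E(\widehat{\Lambda})$ term in~\eqref{eq:TypLambda_Exp} is $o(\E^2(\widehat{\Lambda}))$ since $\E(\widehat{\Lambda}) \to \infty$. For $k=0$ the six vertices involved are i.i.d., so the two incomplete-triangle events are independent and $S_0 = 3\binom{N}{3}\cdot 3\binom{N-3}{3}\cdot \P(\Lambda)^2 = (1-O(1/N))\E^2(\widehat{\Lambda})$. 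For $k=3$, pairs of distinct triples share their vertex set but have different pivots; the joint event then forces all three edges of a triangle on the shared triple, so $S_3 \leq 6\binom{N}{3}\P(\Delta) = O(\E(\widehat{T})) = O(\E(\widehat{\Lambda})) = o(\E^2(\widehat{\Lambda}))$.

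For $k=1,2$ I would enumerate the sub-cases (for $k=1$: whether the shared vertex is a pivot in both, a pivot in one and an endpoint in the other, or an endpoint in both; for $k=2$: whether the shared pair spans an edge of both configurations, of exactly one, or of neither) and express
\[
\P(\Lambda_1 \cap \Lambda_2) = \int \bar{\rho}_N(t_{x_1})\cdots \bar{\rho}_N(t_{x_{6-k}})\, \P(\Lambda_1 \mid \mathrm{types})\, \P(\Lambda_2 \mid \mathrm{types})\, dt_{x_1}\cdots dt_{x_{6-k}},
\]
using the fact that edges are conditionally independent given vertex positions. Bounding each $\P(\Lambda_i \mid \mathrm{types})$ via Lemma~\ref{lemma_2.4_evolution} and replacing $\bar{\rho}_N$ by the exponential $\alpha e^{-\alpha t}$ from Claim~\ref{clm:density_approx} then reduces each $S_k$ to an explicit integral, which I would compare (regime by regime in $\beta$ and $\zeta/\alpha$) with the corresponding integral form of $\E^2(\widehat{\Lambda})$ computed in Proposition~\ref{prop:incomplete_triangles}. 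Multiplying by the count $\asymp N^{6-k}$ of configurations, the expected conclusion is $S_k = o(\E^2(\widehat{\Lambda}))$ in every regime.

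The main obstacle lies in the $k=1,2$ cases: the joint integral fails to factorize into the square of a single-$\Lambda$ integral, because the shared vertex contributes its type density only once while appearing in two edge-probability factors. For each sub-case one must essentially rerun the calculation of Section~\ref{sect:proof_prop_21} with a modified integrand in which the shared type-variable carries a \emph{squared} exponential weight in $t$, and check that the resulting integral, after the extra $N^{-k}$ factor from having $k$ fewer vertex choices, is dominated by $\E^2(\widehat{\Lambda})$. The most delicate situation is when a shared endpoint has type near the cutoff $R/2-\omega(N)$, since there the squared exponential weight is largest; it is precisely the truncation in the definition of \emph{typical vertex} together with the cap $\frac{\zeta}{2\alpha}R$ from Corollary~\ref{cor:x0} that keeps this second moment contribution within the required bound uniformly across the nine $(\beta,\zeta/\alpha)$-regimes.
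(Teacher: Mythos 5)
Your plan matches the paper's in outline---expand $\E(\widehat{\Lambda}^2)$ as in~(\ref{eq:TypLambda_Exp}), partition the cross-terms by the size of the vertex overlap, identify the disjoint case as the dominant one, and show the rest are negligible---but there is a concrete error in the formula you write for the intersecting cases, and it sits exactly where the work of the proof actually happens. You claim that for $k\in\{1,2\}$
\[
\P(\Lambda_1 \cap \Lambda_2) = \int \bar{\rho}_N(t_{x_1})\cdots \bar{\rho}_N(t_{x_{6-k}})\, \P(\Lambda_1 \mid \mathrm{types})\, \P(\Lambda_2 \mid \mathrm{types})\, dt_{x_1}\cdots dt_{x_{6-k}},
\]
justified by conditional independence of edges. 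Conditional independence given types is a fact about \emph{distinct} pairs of vertices; the displayed identity is therefore correct only when $E(\Lambda_1)\cap E(\Lambda_2)=\emptyset$. You yourself flag the $k=2$ sub-case in which the shared pair spans an edge of both configurations (for instance $w_1=w_2$, $u_1=u_2$, $v_1\neq v_2$: both paths use the edge $u_1w_1$). There the correct conditional probability is the product of $\P(e\mid\mathrm{types})$ over the \emph{distinct} edges of $E(\Lambda_1)\cup E(\Lambda_2)$, whereas your formula squares the shared-edge probability and hence \emph{underestimates} $\P(\Lambda_1\cap\Lambda_2\mid\mathrm{types})$ by a factor of order $A_{u_1,w_1}\gtrsim e^{\zeta\omega(N)}$, which is unbounded. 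Since you need an \emph{upper} bound on $\E(\widehat{\Lambda}^2)$, this is an inequality pointing in the wrong direction, and the shared-edge sub-cases are precisely the delicate ones: they correspond to small stars and, after the paper's extremal reduction, are exactly the configurations (Cases 3 and 6 of its enumeration) that it verifies by hand. Replacing your identity by $\P(\Lambda_1\cap\Lambda_2\mid\mathrm{types})=\prod_{e\in E(\Lambda_1)\cup E(\Lambda_2)}\P(e\mid\mathrm{types})$ and redoing the integrals repairs this and brings you to essentially the same computation as the paper.

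Two smaller remarks. The paper organizes the $k=1,2$ bookkeeping through a general tree-embedding estimate (Claim~\ref{claim:embeddings}) plus an observation that stars are extremal, so that only two configurations need explicit verification; your direct sub-case enumeration works too but requires checking several configurations across the three $\beta$-regimes and three $\zeta/\alpha$-regimes. Also, Corollary~\ref{cor:x0} does not enter: the types appearing in $\widehat{\Lambda}$ are already capped at $R/2-\omega(N)$ by the definition of a typical vertex, and it is this truncation alone that controls the shared-vertex weight.
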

\begin{proof}
To evaluate the second moment of $\widehat{\Lambda}$, we need to control the dependencies between every two triples of vertices $u_1,v_1,w_1 $
and $u_2,v_2,w_2 $, assuming that the vertices $w_1$ and $w_2$ are the pivoting vertices. More precisely, we have { eight} possible configurations:
\begin{itemize}
\item[1.] $ \{u_1,v_1,w_1\} \cap \{u_2,v_2,w_2 \} =\emptyset$;
\item[2.] 
    $u_1=u_2$ with $ \{v_1,w_1\} \cap \{v_2,w_2 \} =\emptyset $
(or, analogously, $v_1=v_2$ and $\{u_1,w_1\} \cap \{u_2,w_2 \} =\emptyset $);
\item[3.] $w_1=w_2$ and $\{u_1,v_1\} \cap \{u_2,v_2 \} =\emptyset $;
\item[4.] $u_1=u_2, v_1=v_2, w_1\neq w_2$;
\item[5.] $w_1 = u_2$, $v_1 = w_2$ and $v_1 \not = u_1$;
\item[6.] $w_1=w_2,u_1=u_2, v_1\neq v_2 $ (or $w_1=w_2,v_1=v_2, u_1\neq u_2$).
{ 
\item[7.] $ v_1=w_2$ and $\{u_1,w_1\}\cap \{u_2,v_2\}=\emptyset $.
\item[8.] $w_1=u_2, v_1=v_2 $ and $ u_1\neq w_2$.
}
\end{itemize}
We denote by $\widehat{\Lambda}_i$ the contribution in (\ref{eq:TypLambda_Exp}) of the terms that correspond to case $i$ for $i=1,\ldots, 8$.
Our aim is to show that
\begin{equation} \label{eq:ToShowI}
\widehat{\Lambda}_1  = \E^2 \left( \widehat{\Lambda} \right)(1+o_p(1))
\end{equation}
and for each $i=2,\ldots, 8$ we have
\begin{equation} \label{eq:ToShowII}
\widehat{\Lambda}_i  = o_p\left( \E^2 \left( \widehat{\Lambda} \right)\right).
\end{equation}

Note that Proposition~\ref{prop:incomplete_triangles} implies that
\begin{equation} \label{eq:LowTypLambda}
\E (\widehat{\Lambda}) \gtrsim
\begin{cases}
N^{3-2 \beta'}, & \mbox{if $\beta'\zeta/\alpha \leq 1$} \\
N^{3-\beta' - \alpha /\zeta} e^{-(\beta' \zeta - \alpha)\omega (N)}, & \mbox{if $\beta'\zeta/\alpha > 1$}
\end{cases},
\end{equation}
where $\beta'$ is defined in (\ref{eq:beta_prime}).
To deduce (\ref{eq:ToShowII}), we will compare the polynomial terms that bound $\widehat{\Lambda}_i$ with the polynomial terms 
in (\ref{eq:LowTypLambda}), { hence from now on we only consider the polynomial terms in $N$.

Now, the probability that the event described in cases 4 and 8 occurs can be bounded from above by the probability of having a path of length three.
Hence, all events listed in cases 2--8 can be reduced to the event of certain trees appearing in the system.
More precisely, the probability of any of the events of cases 2--8 is bounded from above by the probability of a certain tree with $3$ (resp.\ $4$) edges and $4$ (resp.\ $5$) vertices to be present in the graph.
In general, 
we obtain the following result.
\begin{Claim}\label{claim:embeddings}
Let $\mathcal{T}_k$ be a tree on $k\geq 2$ vertices $u_1,u_2,\ldots,u_{k}$ that have degrees $ n_1,n_2,\ldots,n_{k}$, where $k$ 
is fixed.
Then 
\[
\begin{split}
\E & (\# \textnormal{ copies of }\mathcal{T}_k\textnormal{ in }\G(N;\zeta,\alpha,\beta,\nu))\\
& \asymp N^{k-\beta'(k-1)} R^{\delta (k-1)}\prod_{i=1}^{k} \max \left \{ 1, R, N^{\beta' n_i/2-\alpha/\zeta} 
e^{-\omega(N)(\beta'\zeta n_i/2-\alpha)}.
\right \},
\end{split}
\]
\end{Claim}
\begin{proof}
We will calculate the probability that vertices $v_1,\ldots, v_k$ form a tree that is isomorphic to $\mathcal{T}_k$, where 
$v_i$ is identified with $u_i$. We shall give an estimate first on the probability that the $v_i$s form this tree conditional on their types. 
So suppose that the vertices $v_1,v_2,\ldots,v_{k}$ have types $ t_{u_1},t_{u_2},\ldots,t_{u_{k}}$, respectively. 

We shall expose/embed the positions of $v_1,v_2,\ldots, v_k$ according to a breadth-first ordering of the vertices of $\mathcal{T}_k$.
 Without loss of generality, let us assume that 
$v_1$ is the root of $\mathcal{T}_k$. The breadth-first search algorithm discovers in each \emph{round} 
the children of an internal vertex, starting at $v_1$. If the rooted  $\mathcal{T}_k$ has $\ell$ leaves, then there will be $k-\ell$ rounds.
Also, assume for simplicity, that during round $i$ the children of vertex $v_i$ will be exposed, for $i=1,\ldots, k-\ell$. Let
$\mathcal{T}_k^{(i)}$ denote the subtree of $\mathcal{T}_k$ that has been revealed after $i$ rounds of the breadth-first search. 
With a slight abuse of notation, we also use the symbol $\mathcal{T}_k^{(i)}$ to denote the \emph{event} that the subtree
$\mathcal{T}_k^{(i)}$ has been embedded. 

Consider $\mathcal{T}_k^{(1)}$ first. If $v_{1}$ has $v_{i_1},\ldots, v_{i_{n_1}}$ as its children, 
then (with $\sim$ denoting vertex adjacency)
\begin{equation}\label{eq:ind_base}
 \begin{split}
 \P ( \mathcal{T}_k^{(1)} \mid t_{v_1},\ldots , t_{v_k}) &= \prod_{j=1}^{n_1} 
 \P (v_{i_j} \sim v_1 \mid t_{v_1}, t_{v_{i_j}}) \\
& \stackrel{Lemma~\ref{lemma_2.4_evolution}}{\asymp}  N^{-\beta'n_1} R^{\delta n_1} e^{\beta' \zeta t_{v_1} n_1 /2} 
\prod_{j=1}^{n_1} e^{\beta' \zeta t_{v_{i_j}}/2}.
 \end{split}
\end{equation} 
Suppose that $\mathcal{T}_k^{(i)}$ has $e_i$ edges and let $V_i$ denote its set of vertices with $\mathcal{L}_i \subset V_i$ 
being its set of leaves. 
Assume that for $i$ that satisfies $1\leq i < k-\ell$ we have shown that 
\begin{equation}\label{eq:ind_hyp} 
\P (\mathcal{T}_k^{(i)} \mid t_{v_1},\ldots, t_{v_k}) \asymp  N^{-\beta' e_i} R^{\delta e_i}
\prod_{v_j \in V_i \setminus \mathcal{L}_i} e^{\beta' \zeta t_{v_j} n_j /2} 
\prod_{v_j \in \mathcal{L}_i} e^{\beta' \zeta t_{v_{i_j}}/2}.
\end{equation}
In the above expression, the hidden constants do depend on $i$. 

We shall derive a similar expression for $\mathcal{T}_k^{(i+1)}$. Assume that 
$v_{i+1}$, which has to be a leaf in $\mathcal{T}_k^{(i)}$ has $v_{j_1},\ldots, v_{j_{n_{i+1}-1}}$ as its children that are 
to be exposed. 
\[
\begin{split} 
\P ( \mathcal{T}_k^{(i+1)} \mid  t_{v_1},\ldots, t_{v_k} ) 
&= \P ( \{ \cap_{s=1}^{n_{i+1}-1} v_{i+1} \sim v_{j_s} \} \cap \mathcal{T}_k^{(i)} \mid  t_{v_1},\ldots, t_{v_k} ) \\
&= \P ( \{ \cap_{s=1}^{n_{i+1}-1} v_{i+1} \sim v_{j_s} \} \mid  t_{v_1},\ldots, t_{v_k} )~
\P ( \mathcal{T}_k^{(i)} \mid  t_{v_1},\ldots, t_{v_k}).
\end{split}
\]
The first term on the right-hand side can be calculated as in (\ref{eq:ind_base}):
\[ 
\P ( \{ \cap_{s=1}^{n_{i+1}-1} v_{i+1} \sim v_{j_s} \} \mid  t_{v_1},\ldots, t_{v_k} ) 
=  N^{-\beta'( n_{i+1}-1 )} R^{\delta ( n_{i+1} -1)} e^{\beta' \zeta t_{v_{i+1}} ( n_{i+1}-1 ) /2} 
\prod_{s=1}^{n_{i+1}-1} e^{\beta' \zeta t_{v_{j_s}}/2}.
\]
The second term is given by (\ref{eq:ind_hyp}). 
Now, observe that $\mathcal{L}_{i+1} = \mathcal{L}_i \cup \{v_{j_1},\ldots, v_{j_{n_{i+1}-1}} \} \setminus v_{i+1}$ and 
$v_{i+1} \in V_{i+1} \setminus \mathcal{L}_{i+1}$.
Also, $e_{i+1} = e_i + (n_{i+1}-1)$. 
Therefore, 
\begin{equation*}
\begin{split}
\lefteqn{\P ( \mathcal{T}_k^{(i+1)} \mid  t_{v_1},\ldots, t_{v_k} ) \asymp} \\
&\left( N^{-\beta'( n_{i+1}-1 )} R^{\delta ( n_{i+1} -1)} e^{\beta' \zeta t_{v_{i+1}} ( n_{i+1}-1 ) /2} 
\prod_{s=1}^{n_{i+1}-1} e^{\beta' \zeta t_{v_{j_s}}/2} \right)\times \\
& \hspace{2cm}
\left( N^{-\beta' e_i} R^{\delta e_i}
\prod_{v_j \in V_i \setminus \mathcal{L}_i} e^{\beta' \zeta t_{v_j} n_j /2} 
\prod_{v_j \in \mathcal{L}_i} e^{\beta' \zeta t_{v_{i_j}}/2}\right) \\
&= N^{-\beta' (e_i + n_{i+1}-1)} R^{\delta ( e_i + n_{i+1}-1 )}
\prod_{v_j \in V_{i+1} \setminus \mathcal{L}_{i+1}} e^{\beta' \zeta t_{v_j} n_j /2} 
\prod_{v_j \in \mathcal{L}_{i+1}} e^{\beta' \zeta t_{v_j}/2} \\
&= N^{-\beta' e_{i+1}} R^{\delta e_{i+1}} 
\prod_{v_j \in V_{i+1} \setminus \mathcal{L}_{i+1}} e^{\beta' \zeta t_{v_j} n_j /2} 
\prod_{v_j \in \mathcal{L}_{i+1}} e^{\beta' \zeta t_{v_j}/2}.
\end{split}
\end{equation*}
Thus, 
\begin{equation*}
\begin{split}
&\P ( \mathcal{T}_k^{(k-\ell)} \mid  t_{v_1},\ldots, t_{v_k} ) \asymp N^{-\beta' e_{k-\ell}} R^{\delta e_{k-\ell}}
\prod_{v_j \in V_i \setminus \mathcal{L}_{k-\ell}} e^{\beta' \zeta t_{v_j} n_j /2} 
\prod_{v_j \in \mathcal{L}_{k-\ell }} e^{\beta' \zeta t_{v_{i_j}}/2} \\
&\stackrel{ \mathcal{T}_k^{(k-\ell)} = \mathcal{T}_k}{=}N^{-\beta'(k-1)} R^{\delta (k-1)}\prod_{i=1}^{k} e^{\beta'\zeta n_i t_{v_i}/2}.
\end{split}
\end{equation*}

Integrating over the types, we get
\[
\begin{split}
\lefteqn{\P  (v_1,v_2,\ldots,v_{k}\textnormal{ form an embedding of }\mathcal{T}_k) \asymp}\\
& \left( \frac{R^{\delta}}{N^{\beta'}} \right)^{k-1} \int_0^{R/2-\omega (N)} \cdots \int_0^{R/2-\omega (N)}e^{\beta'\zeta n_1 t_{u_1}/2-\alpha t_{u_1}}\cdots e^{\beta'\zeta n_{k} t_{u_{k}}/2-\alpha t_{u_{k}}}dt_{u_1}\cdots dt_{u_{k}}\\
& \asymp  N^{-\beta'(k-1)} R^{\delta (k-1)}\prod_{i=1}^{k}\max \left \{ 1, R, e^{(R/2-\omega(N))(\beta'\zeta n_i/2-\alpha)}\right \}.
\end{split}
\]
The number of choices for the vertices $v_1,\ldots, v_k$ and their arrangement into a copy of $\mathcal{T}_k$ is  proportional 
to $N^k$.  Hence, multiplying the above expression by $N^{k}$ yields the statement of the claim. 
\end{proof}
Now, suppose that there are exactly $\ell\geq 1$ vertices with degrees larger than $ 2\alpha/(\beta'\zeta)$.
Without loss of generality, we can re-label the vertices in such a way that $u_1,u_2,\ldots,u_{\ell}$ are such that 
$n_i> 2\alpha/(\beta'\zeta)$ for $i\in \{1, 2, \ldots,\ell\}$. Furthermore, assume that there are $\ell'$ vertices with 
$n_i = 2\alpha/(\beta'\zeta)$.
This implies that 
\[
\prod_{i=1}^{k}\max \left \{ 1, R, e^{(R/2-\omega(N))(\beta'\zeta n_i/2-\alpha)}\right \}= R^{\ell'} e^{(R/2-\omega(N))\Bigl (\beta'\zeta \bigl (\sum_{j=1}^\ell n_j\bigr )/2-\ell \alpha\Bigr )}.
\]
Since the $n_j$s are degrees of vertices in $\mathcal{T}_k$ we have $\sum_{j=1}^\ell n_j\leq 2(k-1)$, since there are $k-1$ edges in
$\mathcal{T}_k$.

Thus, 
\[
\beta'\zeta \bigl (\sum_{j=1}^\ell n_j\bigr )/2-\ell \alpha \leq \beta'\zeta 2(k-1)/2-\ell \alpha=\beta'\zeta (k-1)-\ell \alpha
\stackrel{\ell\geq 1}{\leq} \beta'\zeta (k-1)- \alpha.
\]
The last quantity corresponds to the case when $\mathcal{T}_k$ is a star, in fact this is the case when $\ell=1$ and $n_1=k-1$.
(Note that even if $\ell=0$, then the star provides an upper bound.) 
This implies that for $i=2,\ldots, 8$ we have $\E (\widehat{\Lambda}_i) = o \left( \max \{ \E (\widehat{\Lambda}_3) , 
\E (\widehat{\Lambda}_6) \} \right)$. 
So to show (\ref{eq:ToShowII}), it suffices to prove it for Cases 3 and 6. 
However, we begin Case 1 which shows (\ref{eq:ToShowI}).
}

\noindent
\emph{Case 1:} the two incomplete triangles are clearly independent, since they are disjoint. In other words, the realization of the event
$\{ u_1\sim w_1\sim v_1\}$ gives no information about the triple $\{u_2,v_2,w_2 \}$.
Thus,
\[
\begin{split}
& \P \bigl (\Lambda(u_1,v_1;w_1)\cap \Lambda(u_2,v_2;w_2) \bigr )=\P \bigl (\Lambda(u_1,v_1;w_1)\bigr )
\P \bigl( \Lambda(u_2,v_2;w_2) \bigr ).
\end{split}
\]
There are $3{N \choose 3}$ ways to select three distinct vertices $u_1, v_1, w_1$ and distinguish among them the central vertex of the path.
Having chosen those, there $3{N-3 \choose 3}$ ways for selecting the second triple. 
Therefore,
\[
\E (\widehat{\Lambda}_1) =9
{N \choose 3}{N-3 \choose 3}\P \bigl (\Lambda(u_1,v_1;w_1)\bigr ) \P \bigl( \Lambda(u_2,v_2;w_2) \bigr )=
\E^2 ( \widehat{\Lambda})(1-o(1)),
\]
which is (\ref{eq:ToShowI}).

\smallbreak

\noindent
\emph{Case 3:} 
By Claim~\ref{claim:embeddings}, the contribution of \emph{Case 3} to $\E (\widehat{\Lambda}^2)$ is
\[
\E (\widehat{\Lambda}_3) \lesssim
\left \{ \begin{array}{ll}
N^5\frac{R^{4\delta}}{N^{4\beta'}} \asymp  R^{4\delta} N^{5-4\beta'}, & \textnormal{ if }\frac{\beta'\zeta}{\alpha}<\frac{1}{2}\\[1.1ex]
N^5\frac{R^{4\delta+1}}{N^{4\beta'}} \asymp N^{5-4\beta'} R^{4\delta+1}, & \textnormal{ if }\frac{\beta'\zeta}{\alpha}=\frac{1}{2} \\[1.1ex]
R^{4\delta}\, N^5\frac{N^{2\beta'-\alpha /\zeta}e^{-(2\beta'\zeta - \alpha )\omega (N)}}{N^{4\beta'}}, & \textnormal{ if }\frac{\beta'\zeta}{\alpha}
> \frac{1}{2}
\end{array} \right ..
\]
Now it suffices to take into account the exponents of the terms in $N$.

When $\beta' \zeta / \alpha \leq 1/2$, then (\ref{eq:ToShowII}) holds since $5-4\beta' < 2(3-2\beta')$.
When $\beta'\zeta /\alpha > 1/2$, we need to have
$$ 5-2\beta' - \alpha / \zeta \leq 2(3-2\beta'), \ \mbox{if $1/2 < \beta' \zeta /\alpha \leq 1$}.$$
This suffices in order to show (\ref{eq:ToShowII}) because of the $e^{-(2\beta'\zeta - \alpha )\omega (N)}$ factor.
This is equivalent to $2\beta' \leq \alpha /\zeta + 1$, which holds since $\beta' \leq {\alpha/\zeta}$ and $\beta' \leq 1$.
Also, we need to have
$$ 5-2\beta' - \alpha / \zeta < 2(3-\beta' - \alpha /\zeta), \ \mbox{if $1 < \beta' \zeta /\alpha $ }.$$
But this is equivalent to $\alpha /\zeta < 1$, which also holds since $\alpha /\zeta < \beta'\leq 1$.
\smallbreak

\noindent
\emph{Case 6:} 
Claim~\ref{claim:embeddings} yields:
\[
\E(\widehat{\Lambda}_6) \asymp
\left \{ \begin{array}{ll}
N^4~\frac{R^{3\delta}}{N^{3\beta'}}, & \textnormal{ if }\frac{\beta'\zeta}{\alpha}<\frac{2}{3}\\[1.1ex]
N^4\frac{R^{3\delta+1}}{N^{3\beta'}}, & \textnormal{ if }\frac{\beta'\zeta}{\alpha}=\frac{2}{3}\\[1.1ex]
N^4\frac{R^{3\delta}\,N^{(3\beta'/2-\alpha/\zeta)}e^{-(3\zeta\beta'/2-\alpha)\omega(N)}}{N^{3\beta'}} = & \\
\quad = R^{3\delta}\,N^{4-3\beta'/2-\alpha/\zeta}e^{- (3\beta'\zeta/2-\alpha)\omega(N)},
& \textnormal{ if }\frac{\beta'\zeta}{\alpha}>\frac{2}{3}
\end{array} \right ..
\]
To verify (\ref{eq:ToShowII}) when $\beta'\zeta/ \alpha\leq 2/3$, note that
$4-3\beta' < 2(3-2\beta')$ (which is equivalent to $\beta' < 2$) holds. When $2/3 < \beta' \zeta /\alpha \leq 1$,
it suffices to verify that
$$ 4- 3 \beta' /2 -\alpha /\zeta < 2(3 - 2\beta'), $$
which is equivalent to $5 \beta' /2 - \alpha /\zeta < 2$. But $\alpha /\zeta \geq \beta'$, which implies that
$$ {5 \beta' \over 2} - {\alpha \over \zeta}  \leq {3\beta' \over 2} \leq {3 \over 2} < 2.$$
Finally, assume that $\beta' \zeta /\alpha > 1$.
Here it suffices to show that
$$ 4- 3 \beta' /2 -\alpha /\zeta < 2(3- \beta' - \alpha /\zeta),$$
which is equivalent to
$$ {\beta' \over 2} + {\alpha \over \zeta} < 2.$$
But $\alpha / \zeta < \beta'$ whereby the left-hand side is at most $3 \beta' /2 \leq 3/2 < 2$. Hence, (\ref{eq:ToShowII}) holds also in this case.
\end{proof}

Now we have that $\operatorname{Var}(\widehat{\Lambda})= o (\E^2 \widehat{\Lambda}) $ and, therefore, the concentration of $\widehat{\Lambda} $ follows immediately from Chebyshev's inequality together with the fact that $\E (\widehat{\Lambda}) \rightarrow \infty$ as $N \rightarrow \infty$
(cf. (\ref{eq:LowTypLambda})).

Next we show the analogue of Lemma~\ref{ED2} for $\widehat{T}$.
\begin{Lemma}\label{ET2} For any $0 < {\zeta \over \alpha} < 1$ and $\beta > 1$ we have
\[
\E (\widehat{T}^2)= \E^2 \widehat{T} \bigl ( 1+o(1)\bigr ).
\]
\end{Lemma}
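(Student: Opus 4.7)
The plan is to mirror the structure of the proof of Lemma \ref{ED2}, but with the second moment of $\widehat{T}$ decomposed according to the size of the intersection of two vertex triples rather than the more refined configurations needed for incomplete triangles. Write
\[
\E(\widehat{T}^2) = \E(\widehat{T}) + \sum_{\{u_1,v_1,w_1\} \neq \{u_2,v_2,w_2\}} \P\bigl(\Delta(u_1,v_1,w_1)\cap \Delta(u_2,v_2,w_2)\bigr),
\]
and split the double sum into three contributions $\widehat{T}_{(0)}, \widehat{T}_{(1)}, \widehat{T}_{(2)}$ according as $|\{u_1,v_1,w_1\}\cap\{u_2,v_2,w_2\}| \in \{0,1,2\}$. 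The goal is to show $\E(\widehat{T}_{(0)}) = (1+o(1))\E^2(\widehat{T})$ and $\E(\widehat{T}_{(1)}),\E(\widehat{T}_{(2)}) = o(\E^2(\widehat{T}))$; since $\E(\widehat{T}) \asymp N$ in this regime by Proposition \ref{prop:complete_triangles}, together with the diagonal term $\E(\widehat{T}) = o(N^2)$ this yields the lemma.

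For the disjoint case, the two triples involve disjoint sets of i.i.d.\ vertices, so the indicator events are independent. Counting gives $\binom{N}{3}\binom{N-3}{3}\P(\Delta)^2 = (1-o(1))\E^2(\widehat{T})$, which is the leading order. For the one-vertex-overlap case, I would condition on the position (type and angle) of the shared vertex $w$. Given this conditioning, the two triangle events depend on disjoint sets of the remaining four vertices and are therefore conditionally independent. By the angular symmetry of the model, $\P(\Delta(u,v,w)\mid\mathrm{pos}(w))$ depends only on $t_w$; call this function $f(t_w)$. Then
\[
\E(\widehat{T}_{(1)}) \,\lesssim\, N^5 \int_0^{R/2-\omega(N)} f(t_w)^2\, \bar{\rho}_N(t_w)\, dt_w.
\]
Since $\P(\Delta) \asymp N^{-2}$ (from $\E(\widehat{T})\asymp N$), a calculation using Lemma \ref{lemma_2.4_evolution} and Claim \ref{clm:density_approx}, in the spirit of those performed for the expected triangle count in Section \ref{sect:proof_prop_22}, shows $f(t_w) \asymp N^{-2} e^{c\,t_w}$ for an explicit $c>0$, and the resulting integral is bounded by $O(N^{-4})$; hence $\E(\widehat{T}_{(1)}) = O(N) = o(N^2)$.

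For the two-vertex-overlap case the two triangles share an edge $\{u,v\}$ with distinct apex vertices $w_1,w_2$, so together they form a $K_4-e$. Here I would condition on the positions of $u$ and $v$ (not merely their types): then the event $\{u \sim v\}$ is determined, and the two events $\{w_i \sim u, w_i \sim v\}$ ($i=1,2$) are conditionally independent, depending only on the independent positions of $w_1,w_2$. Let $h(\theta_{u,v},t_u,t_v,t_w)$ denote $\P(w \sim u, w\sim v\mid \mathrm{pos}(u),\mathrm{pos}(v),t_w)$; by angular symmetry it depends only on the relative angle $\theta_{u,v}$ and the three types. Then
\[
\E(\widehat{T}_{(2)}) \lesssim N^4 \int \bar{\rho}_N(t_u)\bar{\rho}_N(t_v)\bar{\rho}_N(t_{w_1})\bar{\rho}_N(t_{w_2}) \int_0^\pi \frac{d\theta}{\pi}\, p_{u,v}(\theta)\, h(\theta,\ldots,t_{w_1})\, h(\theta,\ldots,t_{w_2})\, dt's.
\]
Using Lemma \ref{lem:probs} to estimate $p_{u,v}(\theta)$ and the fact that $h(\theta,\cdot,\cdot,t_w)$ is supported on (and uniform over) an angular window of width $\asymp e^{\zeta(t_u+t_w-R)/2}\wedge e^{\zeta(t_v+t_w-R)/2}$, one concludes that this integral (times $N^4$) is bounded by $O(1)$, hence $\E(\widehat{T}_{(2)}) = o(N^2)$ as well.

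The main obstacle is the two-vertex-overlap case: unlike the tree configurations in Claim \ref{claim:embeddings}, the graph $K_4 - e$ contains a cycle, so conditional independence given types alone fails, and one must condition on the full positions of the shared edge's endpoints and carefully track the angular integral over $\theta_{u,v}$ using the sharp form of Lemma \ref{lem:probs}. Once all three bounds are in place, $\mathrm{Var}(\widehat{T}) = o(\E^2(\widehat{T}))$ and Chebyshev's inequality (with $\E(\widehat{T})\to\infty$) delivers the concentration asserted in Proposition \ref{prop_concentration} for $\widehat{T}$.
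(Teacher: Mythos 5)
Your approach is sound in outline but genuinely different from — and substantially more laborious than — the one in the paper, and it leaves the hardest step as a sketch. The paper sidesteps all fresh computation by a single observation: working with \emph{rooted} triangles $\widehat T_r = 3\widehat T$, every joint event $\Delta(u_1,v_1;w_1)\cap\Delta(u_2,v_2;w_2)$ is contained in the corresponding joint event $\Lambda(u_1,v_1;w_1)\cap\Lambda(u_2,v_2;w_2)$, because a triangle rooted at $w$ contains the path pivoted at $w$. Hence $\E(\widehat T_i)\le \E(\widehat\Lambda_i)$ for every overlap configuration $i$, and Lemma~\ref{ED2} already gives $\E(\widehat\Lambda_i)=o(\E^2(\widehat\Lambda))$ for $i\ge 2$. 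Since Propositions~\ref{prop:incomplete_triangles} and~\ref{prop:complete_triangles} give $\E(\widehat T_r)\asymp \E(\widehat\Lambda)$ in the regime $\zeta/\alpha<1$, $\beta>1$, this yields $\E(\widehat T_i)=o(\E^2(\widehat T_r))$ with no new integrals at all. In other words, the cyclic structure of $K_4-e$ that you (correctly) identify as the main obstacle never needs to be confronted directly: any edge-sharing pair of triangles is dominated, after the root-choice bookkeeping, by a pair of edge-sharing wedges, which is a tree configuration already treated.

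The place where your proposal has a genuine gap is exactly the step you flag: for the two-vertex-overlap case you set up the conditioning on $\mathrm{pos}(u),\mathrm{pos}(v)$ and the angular integral against $p_{u,v}(\theta)\,h(\theta,\cdot,t_{w_1})\,h(\theta,\cdot,t_{w_2})$, and then assert without computation that $N^4$ times this is $O(1)$. That estimate happens to be true for $\zeta/\alpha<1$, but it is the one place where the sharp form of Lemma~\ref{lem:probs} and the precise angular-window geometry actually have to be tracked, and as written the proposal offers only a plausibility argument. (A minor further imprecision: in the one-vertex-overlap case you write $f(t_w)\asymp N^{-2}e^{c\,t_w}$ with $c>0$, but after integrating the two outer types against $e^{-\alpha t}$, in the regime $\zeta<\alpha$ the exponent in $t_w$ coming from the wedge probability is $\zeta$, and the subsequent integral $\int e^{(2\zeta-\alpha)t_w}\,dt_w$ requires the subcase distinction $\zeta/\alpha\lessgtr 1/2$ that you elide; the final bound $o(N^2)$ survives, but the argument as stated is loose.) If you wanted to complete the proposal along your own lines, you would essentially be re-deriving the bounds of Claim~\ref{claim:embeddings} for the diamond graph by hand; recognizing that the indicator domination $\mathbf 1_{\Delta(u,v;w)}\le\mathbf 1_{\Lambda(u,v;w)}$ lets you reuse Lemma~\ref{ED2} wholesale is what makes the paper's proof a one-paragraph reduction.
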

\begin{proof} The proof of this fact is almost identical to that of Lemma~\ref{ED2}, which we apply to a version of the random variable $\widehat{T}$.
More specifically, we let $\widehat{T}_r$ denote the number of \emph{rooted} typical triangles, that is, the typical triangles with one distinguished vertex which we call the root.
Note that $\widehat{T}_r = 3 \widehat{T}$, whereby $\E (\widehat{T}) \asymp \E (\widehat{T}_r)$.

To estimate the second moment of $\widehat{T}_r$, we also need to consider all different cases that cover all possible
ways of intersection of the distinct triples $u_1,v_1,w_1$ and $u_2,v_2,w_2$. We denote by $\widehat{T}_i$ the contribution of each
case in $\E (\widehat{T}^2)$.

For three distinct vertices $u,v,w$ we let $\Delta (u,v;w)$ denote the event that the vertices $u,v,w$ form a triangle that is rooted at $w$.
If $u_1,v_1,w_1$ and $u_2,v_2,w_2$ are two disjoint triples of vertices, that is, for \emph{Case 1} we have
\[
\begin{split}
& \P \bigl (\Delta (u_1,v_1;w_1) \Delta (u_2,v_2;w_2) \bigr )=\P \bigl (\Delta (u_1,v_1;w_1)\bigr ) \P \bigl( \Delta (u_2,v_2;w_2) \bigr ),
\end{split}
\]
which implies that
\[
\E (\widehat{T}_1)=\E^2 (\widehat{T})(1-o(1)).
\]
For the remaining { two cases (either when the triangles share a vertex or when they share an edge)} we can deduce that $\E (\widehat{T}_i)/\E^2 ( \widehat{T}_r ) = o(1)$, since $\E (\widehat{T}_i) \leq
\E (\widehat{\Lambda}_i)$ (as every rooted triangle is contained an incomplete triangle whose pivoting vertex is the root of the triangle) and
$\E(\widehat{T}_r) \asymp \E (\widehat{\Lambda})$
(cf. Propositions~\ref{prop:incomplete_triangles},~\ref{prop:complete_triangles}).
\end{proof}
Again, concentration of $\widehat{T}$ follows by applying Chebyshev's inequality together with the fact that
$\E (\widehat{T}) \rightarrow \infty$ as $N \rightarrow \infty$ (cf.~(\ref{expected_T})).

\section{On the probability of triangles: proof of Theorem~\ref{thm:typicalglobalclustering}}\label{sect:probab_triangles}
In this section compute the probability that three \emph{typical} vertices $u,v,w$ form a triangle. In particular, we find an explicit expression for such quantity, which allows us to prove Theorem \ref{thm:typicalglobalclustering}.

For any two distinct points $u$ and $v$, we denote the~\emph{angle of $u$ with respect to $v$}
by $\vartheta_{u,v} \in { (-\pi,\pi)}$; here we assume that $v$ is the reference point and $\vartheta_{u,v}$ is positive
when the straight line that joins $u$ with the center of $\D$ is reached if we rotate the straight line that joins $v$ with the center of $\D$ in
the counterclockwise direction.

We will approximate $\widehat{T}$ by a random variable that counts triangles whose vertices are arranged in a certain way on $\D$.
More specifically, for three distinct vertices $u,v$ and $w$ we let $T(u,v;w)$ denote the indicator random variable that is equal to
1 if and only if $u,v$ and $w$ form a triangle with
$0 \leq \vartheta_{u,w} \leq \pi$ and $- \pi \leq  \vartheta_{v,w} \leq 0$ and $t_u, t_v, t_w < R/2 - \omega (N)$.
We let
\begin{equation} \label{eq:hatT'def}
\widehat{T}' := \sum_{w \in V_N} \sum_{\stackrel{(u,v): }{ u,v \in \V \setminus \{ w\} }} T(u,v;w),
\end{equation}
where the second sum ranges over the set of ordered pairs of distinct vertices in $\V \setminus \{w \}$.
Note that in general $\widehat{T}'$ is not equal to $\widehat{T}$. However,
\begin{equation}  \label{eq:equivalence} \widehat{T} \leq \widehat{T}' \leq 3 \widehat{T}.
\end{equation}
We will work with this random variable as its analysis is somewhat easier compared to that of $\widehat{T}$ due to the fact
that the indicators $T(u,v;w)$ are associated with a certain arrangement of the vertices $u,v,w$ on $\D$.
For any two distinct vertices $u$ and $v$, we define the following quantities:
\begin{equation}\label{tilda_hat_definition}
\tilde{\theta}_{u,v}:=
\begin{cases}
\bigl ( \omega(N)A_{u,v}\bigr )^{-1} & \textnormal{if }\beta \geq  1\\
\omega(N)A_{u,v}^{-1} & \textnormal{if }\beta<1
\end{cases}
\quad \textnormal{and} \quad
\hat{\theta}_{u,v}:=
\begin{cases}
\omega(N)A_{u,v}^{-1} & \textnormal{if }\beta> 1\\
\pi-\tilde{\theta}_{u,v}  & \textnormal{if }\beta \leq 1
\end{cases}.
\end{equation}
Informally, these define an area such that when the (relative) angle between $u$ and $v$ is within these bounds, the probability
that $u$ is adjacent to $v$ is maximized.

We split the expected value of $T(u,v;w)$ according to the value of the angle between $u$ and $w$ and that between $v$ and $w$.
Our aim is to show that the main contribution to the expected value of $T(u,v;w)$ comes from the case when $|\vartheta_{u,w}|$ and
$|\vartheta_{v,w}|$ are within the bounds $\tilde{\theta}_{u,w}$, $\hat{\theta}_{u,w}$ and $\tilde{\theta}_{v,w}$, $\hat{\theta}_{v,w}$  respectively. 
We have
\begin{equation} \label{eq:ProbSplit}
\begin{split}
\P & \Big( T  (u,v;w)=1 \mid t_u, t_v, t_w \Big) \\
& = \E \left( T(u,v;w)\mathbf{1}_{\{ \tilde{\theta}_{u,w}\leq \vartheta_{u,w}\leq \hat{\theta}_{u,w},
-\tilde{\theta}_{v,w}\geq \vartheta_{v,w}\geq - \hat{\theta}_{v,w}\} }\mid t_u, t_v, t_w \right)\\
& + \E \left(T(u,v;w) \left( 1 - \mathbf{1}_{\{ \tilde{\theta}_{u,w}\leq \vartheta_{u,w}\leq \hat{\theta}_{u,w},
-\tilde{\theta}_{v,w}\geq \vartheta_{v,w}\geq - \hat{\theta}_{v,w}\} } \right) \mid t_u, t_v, t_w \right).
\end{split}
\end{equation}
We set
\begin{equation}\label{eq:def_vartheta}
\vartheta(u,v;w):= \mathbf{1}_{\{ \tilde{\theta}_{u,w}\leq \vartheta_{u,w}\leq \hat{\theta}_{u,w},
-\tilde{\theta}_{v,w}\geq \vartheta_{v,w}\geq - \hat{\theta}_{v,w}\} }.
\end{equation}

Using Lemma~\ref{lem:probs}, we express $p_{u,v}$,  $p_{u,w}$ and $p_{v,w}$ as
\begin{equation}\label{exp_sin}
\begin{split}
& p_{u,v}= \frac{1}{C\, A^\beta_{u,v}\sin^\beta (\theta_{u,v}/2)+1},\\[1.1ex]
& p_{u,w}= \frac{1}{C\, A^\beta_{u,w}\sin^\beta (\theta_{u,w}/2)+1},\\[1.1ex]
& p_{v,w}= \frac{1}{C\, A^\beta_{v,w}\sin^\beta (\theta_{v,w}/2)+1},
\end{split}
\end{equation}
as long as $\theta_{u,v} \gg \bar{\theta}_{u,v}$, $\theta_{u,w} \gg \bar{\theta}_{u,w}$ and $\theta_{v,w} \gg \bar{\theta}_{v,w}$,
respectively.
Note that the relative angle between two points is uniformly distributed in the interval $[0,\pi]$.
Moreover, if $\vartheta_{u,v} \in [-\pi, \pi]$, then $\theta_{u,v} = |\vartheta_{u,v}|$ (recall that $\theta_{u,v}$ denotes the relative
angle between the points $u$ and $v$).

\noindent
Now, we focus on the first term of { the right hand side of} (\ref{eq:ProbSplit}).
{ In order to simplify the notation, we set 
\[
C_{u,v}:=e^{\frac{\zeta}{2}(t_u+t_v)},
\]
where $u,v$ are any two elements of $\V$.}
\begin{Lemma} \label{lem:TMain}
Let $u,v,w \in { \V}$ be three distinct vertices. Then uniformly for $t_u, t_v, t_w < R/2 - \omega (N)$ the following hold:

\noindent
for $\beta>1$
\begin{equation}\label{prop:equ_prob_b>1}
\begin{split}
\E & ( T(u,v;w) \vartheta(u,v;w)\mid t_u,t_v, t_w)\\
& = {(1+ o(1))\over \pi^2\, A_{u,w}A_{v,w}}\int_{[0,\infty)^2} \frac{1}{z_1^\beta+1}\frac{1}{z_2^\beta+1}\frac{1}{ (C_{w,v} z_1+C_{w,u} z_2)^\beta+1 } \, d z_1 d z_2,
\end{split}
\end{equation}

\noindent
for $\beta=1$
\begin{equation}\label{prop:equ_prob_b=1}
\begin{split}
\E & ( T(u,v;w) \vartheta(u,v;w) \mid t_u,t_v, t_w) \\
& \lesssim \frac{1}{A_{u,w}A_{v,w}}\int_{1/(2\pi \omega(N))}^{A_{v,w}/2} \int_{1/(2\pi \omega(N))}^{A_{u,w} /2} \frac{1}{z_1+1}
\frac{1}{z_2+1}\frac{1}{ C_{w,v} z_1+C_{w,u} z_2+1 } \, d z_1 d z_2,
\end{split}
\end{equation}
and for $\beta<1$ we have
\begin{equation}\label{prop:equ_prob_b<1}
\begin{split}
\E & (T(u,v;w) \vartheta(u,v;w) \mid t_u,t_v, t_w) \\
& \lesssim \frac{1}{A_{u,w}A_{v,w}}~\int_{\omega(N)/2\pi}^{A_{v,w} /2} \int_{\omega(N)/2\pi}^{A_{u,w} /2}
\frac{1}{z_1^\beta+1}\frac{1}{z_2^\beta+1}\frac{1}{ (C_{w,v} z_1+C_{w,u} z_2)^\beta+1 } \, d z_1 d z_2 .
\end{split}
\end{equation}
\end{Lemma}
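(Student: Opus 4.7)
The approach is to compute the conditional expectation directly by integrating over the angles of $u$ and $v$ relative to $w$, exploiting that in the binomial model the three potential edges among $\{u,v,w\}$ are conditionally independent given the positions of the three vertices. By rotational invariance of the angular distribution, we may fix $\vartheta_w=0$; conditional on the three types, $\vartheta_u$ and $\vartheta_v$ are then independent and uniform on $(-\pi,\pi]$. Setting $y_1:=\vartheta_{u,w}$ and $y_2:=-\vartheta_{v,w}$, the constraints built into $T(u,v;w)$ together with $\vartheta(u,v;w)$ force $y_1\in[\tilde\theta_{u,w},\hat\theta_{u,w}]$ and $y_2\in[\tilde\theta_{v,w},\hat\theta_{v,w}]$, and yield $\theta_{u,w}=y_1$, $\theta_{v,w}=y_2$, $\theta_{u,v}=y_1+y_2$. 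Hence
\[
\E\bigl(T(u,v;w)\vartheta(u,v;w)\mid t_u,t_v,t_w\bigr)=\frac{1}{(2\pi)^2}\int_{\tilde\theta_{u,w}}^{\hat\theta_{u,w}}\int_{\tilde\theta_{v,w}}^{\hat\theta_{v,w}}p_{u,w}\,p_{v,w}\,p_{u,v}\,dy_2\,dy_1.
\]

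Before computing the integral I would check that on the integration rectangle one has $y_1\gg\bar\theta_{u,w}$, $y_2\gg\bar\theta_{v,w}$, and $y_1+y_2\gg\bar\theta_{u,v}$; this follows by direct comparison of the definitions of $\tilde\theta_{\cdot,\cdot}$, $\bar\theta_{\cdot,\cdot}$ with the bound $t_u,t_v,t_w<R/2-\omega(N)$. Thus Lemma~\ref{lem:probs} applies to all three edge probabilities, producing the expressions in~\eqref{exp_sin}.

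For $\beta>1$, the endpoint $\hat\theta_{u,w}=\omega(N)/A_{u,w}$ is $o(1)$ throughout, so the entire rectangle consists of small angles and $\sin(y/2)=(1+o(1))y/2$ uniformly. Substituting $z_1:=A_{u,w}y_1/2$ and $z_2:=A_{v,w}y_2/2$ contributes a Jacobian $4/(A_{u,w}A_{v,w})$ which, combined with $1/(2\pi)^2$, produces the prefactor $1/(\pi^2 A_{u,w}A_{v,w})$ in~\eqref{prop:equ_prob_b>1}. Using $A_{u,v}/A_{u,w}=e^{\zeta(t_w-t_v)/2}$ and $A_{u,v}/A_{v,w}=e^{\zeta(t_w-t_u)/2}$, the argument of $p_{u,v}$ becomes a linear combination of $z_1$ and $z_2$ with precisely the coefficients encoded by $C_{w,v}z_1+C_{w,u}z_2$. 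The transformed domain $[1/(2\omega(N)),\omega(N)/2]^2$ broadens to $[0,\infty)^2$ in the limit, and since for $\beta>1$ the integrand has single-variable tails of order $z^{-\beta}$ and is bounded near the origin, the truncation error is $o(1)$; together with $C=1+o(1)$ from Lemma~\ref{lem:probs} this yields~\eqref{prop:equ_prob_b>1}.

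For $\beta=1$ and $\beta<1$ the same change of variables applies, but the integrand is no longer integrable at infinity, so the truncation at $y_i\leq\hat\theta_{\cdot,w}\approx\pi$ must be retained; expressed in the $z$-variables this produces the limits in~\eqref{prop:equ_prob_b=1}--\eqref{prop:equ_prob_b<1}. Replacing the asymptotic $\sin(y/2)\asymp y/2$ by the elementary bound $\sin(y/2)\geq y/\pi$ valid on $[0,\pi]$ suffices to bound each $p$ from above and produces~\eqref{prop:equ_prob_b=1}--\eqref{prop:equ_prob_b<1}, with the $\lesssim$ absorbing both the $\pi$-vs-$2$ constant and $C=1+o(1)$. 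The main technical obstacle is the $\beta>1$ case: one must show quantitatively that \emph{both} the tail $\int_{\omega(N)/2}^{\infty}$ and the sliver $\int_{0}^{1/(2\omega(N))}$ contribute negligibly compared with the bulk, so that the bulk converges to the positive quantity $g_{t_u,t_v,t_w}(\beta,\zeta)$ appearing in Theorems~\ref{thm:globalclustering} and~\ref{thm:typicalglobalclustering}; a secondary bookkeeping point is propagating the $(1+o(1))$ from the sine approximation uniformly over a rectangle whose dimensions themselves depend on $\omega(N)$.
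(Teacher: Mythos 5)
Your approach is essentially the same as the paper's: write the conditional expectation as a double angular integral, verify via the definitions of $\tilde\theta$, $\hat\theta$, and $\bar\theta$ that Lemma~\ref{lem:probs} (equivalently Fact~\ref{Fact_I}) applies to all three edge probabilities on the integration rectangle, then change variables $z_i\propto A_{\cdot,w}\theta_{\cdot,w}/2$ using the sine bounds $\theta/\pi\le\sin(\theta/2)\le\theta/2$, with the $\beta>1$ case giving a genuine asymptotic since all angles are $o(1)$ and the extension to $[0,\infty)^2$ costs $o(1)$, and the $\beta\le1$ cases giving only an upper bound via $\sin(\theta/2)\ge\theta/\pi$ with the truncated domain retained. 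The one small point the paper handles that you elide is that for $\beta\le1$ the sum $y_1+y_2$ may exceed $\pi$, so $\theta_{u,v}=2\pi-(y_1+y_2)$; this causes no harm because $\sin(\theta_{u,v}/2)=\sin((y_1+y_2)/2)$, but a careful write-up should note it.
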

\begin{proof}
For all $\beta>0$, we can use the expressions in (\ref{exp_sin}) obtaining:
\begin{equation} \label{eq:exp_int}
\begin{split}
\E & \left ( T(u,v;w)\mathbf{1}_{\{\tilde{\theta}_{u,w}\leq \vartheta_{u,w}\leq \hat{\theta}_{u,w}, -\hat{\theta}_{v,w}\leq \vartheta_{v,w}\leq -\tilde{\theta}_{v,w}\}} \mid t_u,t_v,t_w \right )\\
& = \frac{1}{4\pi^2}\int_{\tilde{\theta}_{u,w}}^{\hat{\theta}_{u,w}} 
\int_{-\hat{\theta}_{v,w}}^{-\tilde{\theta}_{v,w}} p_{u,w}\, p_{v,w}\, p_{u,v}\, d \vartheta_{v,w} d \vartheta_{u,w} \\
& = \frac{1}{4\pi^2} \int_{\tilde{\theta}_{u,w}}^{\hat{\theta}_{u,w}} \int_{-\hat{\theta}_{v,w}}^{-\tilde{\theta}_{v,w}} 
\frac{1}{C\, A^\beta_{u,w}\sin^\beta (\theta_{u,w}/2)+1}
\frac{1}{C\, A^\beta_{v,w}\sin^\beta (\theta_{v,w}/2)+1}~p_{u,v} \, d \vartheta_{v,w} d \vartheta_{u,w}.
\end{split}
\end{equation}
In this case the relative angle between $u$ and $v$ is either $\theta_{u,w} + \theta_{v,w}$, if this sum is at most $\pi$; otherwise,
it is equal to $2\pi - (\theta_{u,w} + \theta_{v,w})$ (this may happen when $\beta \leq 1$).
In this case, $\sin ({ \theta_{u,v}}/2)= \sin \left( \pi - {\theta_{u,w} + \theta_{v,w} \over 2} \right) =
\sin \left( {\theta_{u,w} + \theta_{v,w}  \over 2} \right)$. Also, by the definition of $\hat{\theta}_{u,w}$ and $\hat{\theta}_{v,w}$ for
$\beta \leq 1$, it follows that $\theta_{u,v}\geq \hat{\theta}_{u,w} + \hat{\theta}_{v,w}$. (Note that this issue does not come up when
$\beta >1$;  for $N$ sufficiently large, $\theta_{u,w} + \theta_{v,w}$ is much smaller than $\pi$.)

Thus, in order to use the expression in (\ref{exp_sin}) for $p_{u,v}$, we need to show that the angle $\theta_{u,v}$ is asymptotically
larger than the critical value $\bar{\theta}_{u,v}$ defined by Equation (\ref{theta_c}).
The following fact shows that this is the case.
\begin{Fact}
Let $\beta > 0$ and let $u$ and $v$ be two distinct vertices of the graph with $t_u, t_v < R/2 - \omega (N)$.
Then uniformly for any such $t_u$ and $t_v$ we have
\[
\tilde{\theta}_{u,w}+\tilde{\theta}_{v,w} \gg \bar{\theta}_{u,v}.
\]
\end{Fact}
\begin{proof}
Starting from the definitions of $ \tilde{\theta}_{u,w}$ and $\tilde{\theta}_{v,w}$ (see (\ref{tilda_hat_definition})), we have for
any $\beta >0$:
\[
\begin{split}
\bigl (\tilde{\theta}_{u,w}+\tilde{\theta}_{v,w}\bigr )^2
& \geq \frac{1}{\omega(N)^2}\bigl ( A_{u,w}^{-1}+ A_{v,w}^{-1}\bigr )^2 \geq  \frac{1}{\omega(N)^2}
\bigl( A_{u,w}^{-2}+ A_{v,w}^{-2}\bigr) \\
& =  \frac{\nu^2}{\omega(N)^2}\left ( \frac{e^{\zeta (t_u+t_w)}}{N^2}+ \frac{e^{\zeta (t_v+t_w)}}{N^2}\right )
 \geq  \frac{\nu^2}{\omega(N)^2}\left ( \frac{e^{\zeta t_u}}{N^2}+ \frac{e^{\zeta t_v}}{N^2}\right).
\end{split}
\]
Hence, the statement of the lemma holds if
\[
\frac{1}{\omega(N)^2}\left ( \frac{e^{\zeta t_u}}{N^2}+ \frac{e^{\zeta t_v}}{N^2}\right ) \gg  \frac{e^{2\zeta t_u}}{N^4}+\frac{e^{2\zeta t_v}}{N^4}.
\]
It suffices to show that
\begin{equation*}
\frac{1}{\omega(N)^2} \frac{e^{\zeta t_u}}{N^2} \gg  \frac{e^{2\zeta t_u}}{N^4} \quad \textnormal{ and } \quad \frac{1}{\omega(N)^2} \frac{e^{\zeta t_v}}{N^2} \gg  \frac{e^{2\zeta t_v}}{N^4}.
\end{equation*}
The first condition (the argument for the second is identical) is equivalent to
\begin{equation*}
\frac{1}{\omega(N)^2} \gg  \frac{e^{\zeta t_u}}{N^2} \quad \Leftrightarrow \quad \frac{1}{\omega(N)} \gg  \frac{e^{\frac{\zeta}{2}
t_u}}{N}.
\end{equation*}
The latter holds as $t_u < R/2 - \omega (N)$.
\end{proof}
Now, since $ C=1+o(1)$ uniformly for all $\tilde{\theta}_{u,w}\leq \theta_{u,w}\leq \hat{\theta}_{u,w} $ and $\tilde{\theta}_{v,w}\leq \theta_{v,w}\leq \hat{\theta}_{v,w}$, the last integral in Equation (\ref{eq:exp_int}) is equal to:
\[
\begin{split}
\int_{\tilde{\theta}_{u,w}}^{\hat{\theta}_{u,w}}& \int_{-\hat{\theta}_{v,w}}^{-\tilde{\theta}_{v,w}}
\frac{1}{C\,A^\beta_{u,w}
\sin^\beta (\theta_{u,w}/2)+1}
\frac{1}{C\, A^\beta_{v,w}\sin^\beta (\theta_{v,w}/2)+1}\times \\
& \qquad \frac{1}{C\,A^\beta_{u,v}\sin^\beta ((\theta_{u,w}+\theta_{v,w})/2)+1} d \vartheta_{v,w} d \vartheta_{u,w}  .
\end{split}
\]
To bound this integral we will use a first-order approximation of the $\sin(\cdot)$ function:
\begin{equation} \label{eq:sin_bounds}
{\theta \over \pi}\leq \sin \left( {\theta \over 2} \right) \leq {\theta \over 2}.
\end{equation}
Moreover, if $\theta$ is sufficiently small, then the upper bound is a tight approximation. More precisely,
for any $\delta > 0$ there exists $\eps > 0$ such that for every $\theta < \eps$ we have
\begin{equation*}
\sin \left( {\theta \over 2} \right) > (1-\delta)~{\theta \over 2}.
\end{equation*}
Hence, the above integral can be bounded from above and below by integrals of the form
\begin{equation*}
\begin{split}
\int_{\tilde{\theta}_{u,w}}^{\hat{\theta}_{u,w}}& \int_{-\hat{\theta}_{v,w}}^{-\tilde{\theta}_{v,w}} 
\frac{1}{(\lambda\,A^\beta_{u,w}(
\theta_{u,w}/2)^\beta+1)}\frac{1}{(\lambda\,A^\beta_{v,w}(\theta_{v,w}/2)^\beta+1)}\times \\
& \quad \frac{1}{
\left( \lambda\,A^\beta_{u,v} (({ \theta_{u,w}+\theta_{v,w}})/2)^\beta+1 \right )} d \vartheta_{v,w} d \vartheta_{u,w},
\end{split}
\end{equation*}
where $\lambda>0$ is constant when $\beta \leq 1$, but in fact $\lambda = 1+ o(1)$, when $\beta >1$.
For $\beta \leq 1$, we will only need the upper bound, which is obtained using the lower bound in (\ref{eq:sin_bounds}). Hence, in this case
$\lambda = 1/\pi^{\beta}$. In other words, we can take
$$ \lambda = \begin{cases} 1, & \mbox{if $\beta>1$} \\
{1\over \pi^\beta}, & \mbox{if $\beta \leq 1$} \end{cases}. $$

At this point we can make a convenient change of variables, setting
\[
{ z_1 :=\lambda^{1/\beta}A_{u,w} \frac{\theta_{u,w}}{2}, \quad \textnormal{ and } \quad z_2 :=\lambda^{1/\beta}A_{v,w} \frac{\theta_{v,w}}{2}.}
\]
Note that for each triple of vertices $u,v,w$ we have
\begin{equation}\label{c1_c2}
\begin{split}
& C_{w,v}=\frac{A_{u,v}}{A_{u,w}}=e^{\frac{\zeta}{2}(t_w-t_v)}, \quad \quad C_{w,u} =\frac{A_{u,v}}{A_{v,w}}=e^{\frac{\zeta}{2}(t_w-t_u)},\\
& C_{v,w}=\frac{A_{u,w}}{A_{u,v}}=e^{\frac{\zeta}{2}(t_v-t_w)}, \quad \quad C_{v,u} =\frac{A_{u,w}}{A_{v,w}}=e^{\frac{\zeta}{2}(t_v-t_u)},\\
& C_{u,w}=\frac{A_{v,w}}{A_{u,v}}=e^{\frac{\zeta}{2}(t_u-t_w)}, \quad \quad C_{u,v} =\frac{A_{v,w}}{A_{u,w}}=e^{\frac{\zeta}{2}(t_u-t_v)}.
\end{split}
\end{equation}
Thus, we obtain:
\medskip

\noindent
For $\beta>1$
\begin{equation*}
\begin{split}
\E & ( T(u,v;w)  \mathbf{1}_{\{ \tilde{\theta}_{u,w}\leq \vartheta_{u,w}\leq \hat{\theta}_{u,w},
-\hat{\theta}_{v,w}\leq \vartheta_{v,w}\leq -\tilde{\theta}_{v,w}\} } \mid t_u,t_v, t_w)   \\
&= {(1+ o(1)) \over \pi^2\,A_{u,w}A_{v,w}}{ \int_{\omega(N)^{-1}/2}^{\omega(N)/2} \int_{\omega(N)^{-1}/2}^{\omega(N)/2}\, } \frac{1}{z_1^\beta+1}\frac{1}{z_2^\beta+1}\frac{ d z_1 d z_2}{ (C_{w,v} z_1+C_{w,u} z_2)^\beta+1 } \\
& = {(1+ o(1)) \over \pi^2\,A_{u,w}A_{v,w}}\int_{0}^{\infty} \int_{0}^{\infty} \frac{1}{z_1^\beta+1}\frac{1}{z_2^\beta+1}\frac{1}{ (C_{w,v} z_1+C_{w,u} z_2)^\beta+1 } \, d z_1 d z_2,
\end{split}
\end{equation*}
(where the last equality follows from the fact that the latter integral is finite).
\medskip

\noindent
For $\beta=1$
\[
\begin{split}
\E & ( T(u,v;w) \mathbf{1}_{\{ \tilde{\theta}_{u,w}\leq \vartheta_{u,w}\leq \hat{\theta}_{u,w},
-\hat{\theta}_{v,w}\leq \vartheta_{v,w}\leq -\tilde{\theta}_{v,w}\} }  \mid t_u,t_v, t_w)   \\
& \lesssim \frac{1}{A_{u,w}A_{v,w}}\int_{1/(2\pi \omega(N))}^{A_{v,w}/2} \int_{1/(2\pi \omega(N))}^{A_{u,w}/2} \frac{1}{z_1+1}
\frac{1}{z_2+1}\frac{1}{ C_{w,v} z_1+C_{w,u} z_2+1 } \, d z_1 d z_2.
\end{split}
\]
Finally, for $\beta<1$ we have
\[
\begin{split}
\E & (T(u,v;w)  \mathbf{1}_{\{ \tilde{\theta}_{u,w}\leq \vartheta_{u,w}\leq \hat{\theta}_{u,w},
-\hat{\theta}_{v,w}\leq \vartheta_{v,w}\leq -\tilde{\theta}_{v,w}\} }  \mid t_u,t_v, t_w)  \\
& \lesssim \frac{1}{A_{u,w}A_{v,w}}~\int_{\omega(N)/2\pi}^{A_{v,w}/2} \int_{\omega(N)/2\pi}^{A_{u,w}/2}
\frac{1}{z_1^\beta+1}\frac{1}{z_2^\beta+1}\frac{1}{ (C_{w,v} z_1+C_{w,u} z_2)^\beta+1 } \, d z_1 d z_2 .
\end{split}
\]
The proof of the lemma is now complete.
\end{proof}
\medskip

\noindent
We now focus on the second term in (\ref{eq:ProbSplit}). Recall the definition of $\vartheta(u,v;w)$ from (\ref{eq:def_vartheta}).
\begin{Lemma} \label{lem:TCompl}
{ Recall the definitions of $\beta'$ and $\delta$ from \eqref{eq:beta_prime} and \eqref{eq:def_delta_1} respectively, and}
let $u,v,w \in \D$ be three distinct vertices. Then uniformly for $t_u, t_v, t_w < R/2 - \omega (N)$
\[
\E  \Bigl  ( T(u,v;w) \left ( 1- \vartheta(u,v;w) \right ) \mid t_u,t_v,t_w \Bigr ) =
{ o\left ( \frac{\left ( \ln A_{v,w}\right )^\delta}{( A_{u,w} A_{v,w})^{\beta'}}\right ).}
\]
\end{Lemma}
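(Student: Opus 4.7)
My plan is to partition the ``bad'' event $\{1-\vartheta(u,v;w)=1\}$ according to which of the angles $\vartheta_{u,w}$, $\vartheta_{v,w}$ leaves its designated interval $[\tilde\theta_{\cdot,w},\hat\theta_{\cdot,w}]$, and within each piece to discard the edge $\{u,v\}$ using the trivial bound $p_{u,v}\leq 1$. Conditional on $(t_u,t_v,t_w)$, the two remaining edges are independent of one another and of the angular positions, so the expectation factorises as $\int_{\mathrm{bad}_u} p_{u,w}\,d\vartheta_{u,w}\cdot\int_{-\pi}^{0} p_{v,w}\,d\vartheta_{v,w}$ plus the symmetric term in which $u$ and $v$ are interchanged. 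By Lemma~\ref{lemma_2.4_evolution} the full single-angle integral equals $\asymp(\ln A)^{\delta}/A^{\beta'}$, so, since $\ln A_{u,w}\asymp\ln A_{v,w}\asymp R$, it suffices to prove that the one-dimensional integral of $p$ over the bad set is $o(A^{-\beta'})$ in each of the three regimes $\beta>1$, $\beta=1$, $\beta<1$.

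To evaluate this one-dimensional integral I will use the expression for $p_{u,w}$ from Lemma~\ref{lem:probs} together with the sandwich~\eqref{eq:sin_bounds}. The micro-interval $\vartheta_{u,w}<\bar\theta_{u,w}$, where the hyperbolic distance expansion of Fact~\ref{Fact_I} is not yet valid, has length $o(\tilde\theta_{u,w})$ by exactly the computation performed in the auxiliary Fact inside the proof of Lemma~\ref{lem:TMain}; I absorb it into the small-angle piece $[0,\tilde\theta_{u,w})$ and use $p_{u,w}\leq 1$ there. Outside that micro-interval, on the small-angle piece the same trivial bound integrates to $\tilde\theta_{u,w}$, and on the large-angle piece $(\hat\theta_{u,w},\pi]$ (only non-empty when $\beta>1$) Lemma~\ref{lem:probs} combined with~\eqref{eq:sin_bounds} yields $p_{u,w}\lesssim (A_{u,w}\vartheta_{u,w})^{-\beta}$.

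The three regimes are then disposed of separately. For $\beta>1$ the small piece contributes $(\omega(N) A_{u,w})^{-1}$ and the large piece contributes $\int_{\omega(N)/A_{u,w}}^{\pi}(A_{u,w}\vartheta)^{-\beta}\,d\vartheta\asymp \omega(N)^{1-\beta}/A_{u,w}$, both $o(A_{u,w}^{-1})$. For $\beta=1$ the bad set is $[0,(\omega(N) A_{u,w})^{-1})\cup(\pi-(\omega(N) A_{u,w})^{-1},\pi]$, so $p_{u,w}\leq 1$ already yields $O((\omega(N) A_{u,w})^{-1})=o(A_{u,w}^{-1})$. For $\beta<1$ the bad set has Lebesgue measure $O(\omega(N)/A_{u,w})$ and the bound $p_{u,w}\leq 1$ yields $O(\omega(N)/A_{u,w})=o(A_{u,w}^{-\beta})$, since $\omega(N)\cdot A_{u,w}^{\beta-1}\to 0$ whenever $\omega$ grows more slowly than any positive power of $N$.

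The only real obstacle is conceptual rather than computational: one must recognise that the windows $[\tilde\theta,\hat\theta]$ are calibrated precisely so that the integrated edge-probability on their complement is already smaller than the natural mean by a factor $\omega(N)^{\pm 1}$, and this slack is what makes the crude inequality $p_{u,v}\leq 1$ sufficient and decouples the three edges. Inside the good window, by contrast, all three factors contribute at leading order, which is exactly what Lemma~\ref{lem:TMain} handles.
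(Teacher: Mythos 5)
Your proposal is correct and follows essentially the same route as the paper's proof: a union bound splitting the complement of the good window into the four one-sided bad events, discarding the edge $\{u,v\}$ with $p_{u,v}\le 1$, factorising the remaining two edge probabilities by conditional independence, and then evaluating the one-dimensional integral over each bad piece case-by-case using $p\le 1$ on the small-angle (and, for $\beta\le 1$, the large-angle) piece and the sine sandwich on the $\beta>1$ large-angle piece. The only cosmetic difference is that you explicitly track the micro-interval below $\bar\theta_{u,w}$, which the paper leaves implicit since it too uses $p\le 1$ on all of $[0,\tilde\theta_{u,w})$.
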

\begin{proof}
Using the union bound, we can bound this term as follows:
\begin{equation}\label{p_delta_1}
\begin{split}
\E & \Bigl  ( T(u,v;w) \left ( 1- \mathbf{1}_{\{ \tilde{\theta}_{u,w}\leq \vartheta_{u,w}\leq \hat{\theta}_{u,w}, -\tilde{\theta}_{v,w}\geq\vartheta_{v,w}
\geq -\hat{\theta}_{v,w}\} }\right ) \mid t_u,t_v,t_w \Bigr )\\
& \leq  \E \left (  T(u,v;w)  \mathbf{1}_{\{ \vartheta_{u,w}> \hat{\theta}_{u,w}\} } \mid t_u,t_v,t_w \right ) \\
& \quad +  \E \left (  T(u,v;w)  \mathbf{1}_{\{ \vartheta_{v,w}< -\hat{\theta}_{v,w}\} }\mid t_u,t_v,t_w \right ) \\
&  \quad + \E \left ( T(u,v;w)  \mathbf{1}_{\{ 0<\vartheta_{u,w}< \tilde{\theta}_{u,w}\} } \mid t_u,t_v,t_w \right )\\
& \quad + \E \left (  T(u,v;w)  \mathbf{1}_{\{ 0>\vartheta_{v,w} > -\tilde{\theta}_{v,w}\} } \mid t_u,t_v,t_w \right ).
\end{split}
\end{equation}
We now obtain upper bounds on each term in the right-hand side of the above.

We bound the first term as follows:
\[
\P  \left ( u\sim w,  \hat{\theta}_{u,w} < \vartheta_{u,w} \leq \pi \mid  t_u,t_v,t_w \right ) =
{1\over 2 \pi}\int_{\hat{\theta}_{u,w}}^{\pi}p_{u,w}\, d\vartheta_{u,w}.
\]
When $\beta \leq 1$, by (\ref{tilda_hat_definition}) we have
\begin{equation} \label{eq:1stTerm_beta<=1}
\int_{\hat{\theta}_{u,w}}^{\pi}p_{u,w}\, d\vartheta_{u,w} \leq \tilde{\theta}_{u,w} =
\left \{ \begin{array}{ll}
{1\over \omega (N) A_{u,v}} = o \left( {1\over A_{u,w}}\right), & \mbox{if $\beta =1$} \\
{\omega (N) \over A_{u,v}} = o \left( {1\over A_{u,w}^\beta} \right),& \mbox{if $\beta <1$}
\end{array} \right ..
\end{equation}
For $\beta >1$ we need to work slightly more.
We have
\[
\begin{split}
\P & \left ( u\sim w,  \hat{\theta}_{u,w} < \vartheta_{u,w} \leq \pi \mid  t_u,t_v,t_w \right ) =
{1\over 2 \pi}\int_{\hat{\theta}_{u,w}}^{\pi}p_{u,w}\, d\vartheta_{u,w} \\
&\stackrel{(\ref{exp_sin})}{=}{1\over 2\pi} \int_{\hat{\theta}_{u,w}}^{\pi} \frac{1}{C\, A^\beta_{u,w}\sin^\beta (\theta_{u,w}/2)+1}\, d\vartheta_{u,w} \
 \leq { \frac{1+o(1)}{2\pi A^\beta_{u,w}}} \int_{\hat{\theta}_{u,w}}^{\pi} \frac{d\vartheta_{u,w}}{\sin^\beta (\theta_{u,w}/2),} 
\end{split}
\]
{ since $C=1+o(1)$ uniformly for $u,v$.}
Using the well-known inequality
\[
\sin \left (\frac{\theta}{2} \right )\geq \frac{\theta}{\pi} \quad \textnormal{for all } \theta \in [0,\pi],
\]
we can bound the previous term by
\[
\begin{split}
 & \frac{1}{2\pi C\, A^\beta_{u,w}}\int_{\hat{\theta}_{u,w}}^{\pi} \frac{1}{\sin^\beta (\theta_{u,w}/2)}\, d\theta_{u,w}
 \leq \frac{\pi ^{\beta-1}}{2  C\, A^\beta_{u,w}} \int_{\hat{\theta}_{u,w}}^{\pi} \theta_{u,w}^{-\beta} \, d\theta_{u,w} \\
& = \frac{\pi ^{\beta - 1}}{2C(\beta-1) A^\beta_{u,w}} \left ( \hat{\theta}_{u,w}^{-\beta+1}-\pi^{-\beta+1}\right ) \asymp \frac{1}{A^\beta_{u,w}}  \frac{A^{\beta-1}_{u,w}}{\omega(N)^{\beta-1}}\asymp  \frac{A^{-1}_{u,w}}{\omega(N)^{\beta-1}}.
\end{split}
\]
Hence, for $\beta>1$, we have that
\begin{equation} \label{int_hat_theta}
\P \left ( u\sim w, \vartheta_{u,w}> \hat{\theta}_{u,w} \mid  t_u,t_v,t_w \right )  \lesssim  \frac{1}{A_{u,w}\, \omega(N)^{\beta-1}}.
\end{equation}
We can now return to Equation (\ref{p_delta_1}) and use the above estimates to bound the first term of the right-hand side:
\[
\begin{split}
\E & \left ( T(u,v;w) \mathbf{1}_{ \vartheta_{u,w}> \hat{\theta}_{u,w}} | t_u,t_v,t_w \right )  \leq \P \left ( u\sim w, v\sim w, \vartheta_{u,w}> \hat{\theta}_{u,w} |  t_u,t_v,t_w \right )\\
& =  \P \left ( u\sim w, \vartheta_{u,w}> \hat{\theta}_{u,w} \mid  t_u,t_v,t_w \right ) \P \left ( v\sim w \mid  t_u,t_v,t_w \right )\\
& \stackrel{Lemma~\ref{lemma_2.4_evolution}, (\ref{eq:1stTerm_beta<=1}), (\ref{int_hat_theta})}{\lesssim}
\left \{ \begin{array}{ll}o\left( \frac{1}{A_{u,w}A_{v,w}} \right), & \mbox{if $\beta > 1$}\\
o\left( {\ln (A_{v,w}) \over A_{u,w} A_{v,w}} \right), & \mbox{if $\beta= 1$}\\
o\left( \frac{1}{A_{u,w}^{\beta} A_{v,w}^{\beta}} \right), & \mbox{if $\beta < 1$}
\end{array} \right ..
\end{split}
\]
The first equality is due to the independence of the relative positions of $u$ and $v$ with respect to $w$, together with the independence of
the edges, given the positions of the vertices. The very same calculation can be used in order to deduce the same bound on
$\E \left (  T(u,v;w) \mathbf{1}_{\{ \vartheta_{v,w}< - \hat{\theta}_{v,w}\} } \mid t_u,t_v,t_w \right ) $.

Now we use the fact that $p_{u,w}(\theta)\leq 1 $ to bound
\[
\begin{split}
\E & \left (  T(u,v;w) \mathbf{1}_{\{\vartheta_{u,w}< \tilde{\theta}_{u,w}\} }\mid t_u, t_v, t_w \right )
\leq \P (v\sim w,  0 < \vartheta_{u,w}<\tilde{\theta}_{u,w}\mid t_u, t_v, t_w ) \\ 
& = \P (v\sim w \mid t_u, t_v, t_w) \P (0 < \vartheta_{u,w}<\tilde{\theta}_{u,w}\mid t_u, t_v, t_w ) \\
&=
\left \{ \begin{array}{ll}
o( A_{u,w}^{-1} A_{v,w}^{-1}), & \textnormal{if }\beta\geq 1 \\
o( A_{u,w}^{-\beta}A_{v,w}^{-\beta}), & \textnormal{if }\beta< 1
\end{array} \right ..
\end{split}
\]
In the last two equalities we have used the fact that the event $\{ v \sim w \}$ is independent of the event $\{\vartheta_{u,w}< \tilde{\theta}_{u,w}\}$ together with Lemma~\ref{lemma_2.4_evolution}.
The same bound can be deduced for
$\E \left (  T(u,v;w) \mathbf{1}_{\{0 > \vartheta_{v,w} > -\tilde{\theta}_{v,w} \} } \mid t_u,t_v,t_w \right ) $.
\end{proof}
The above lemma implies that
\begin{Corollary} \label{cor:typical_large_angles}
For $\beta >0$, we have
$$ \E  \Bigl  ( T(u,v;w) \left ( 1- \vartheta(u,v;w)\right ) \mid t_u,t_v,t_w \Bigr ) = o\left( \P \left( u\sim w, v \sim w \mid t_u,t_v, t_w\right ) \right ),$$
uniformly over all $t_u,t_v, t_w \leq R/2- \omega (N)$.
\end{Corollary}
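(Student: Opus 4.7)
The plan is to simply combine Lemma~\ref{lem:TCompl} with Lemma~\ref{lemma_2.4_evolution} (via conditional independence of the edges given the vertex positions). The point is that the upper bound on the ``atypical-angle'' contribution proved in Lemma~\ref{lem:TCompl} has the same polynomial-in-$A_{u,w}, A_{v,w}$ order as the unconditional probability $\P(u\sim w, v\sim w \mid t_u,t_v,t_w)$, but with an extra $o(1)$ factor coming from the $1/\omega(N)^{\beta-1}$ (for $\beta>1$), the $\omega(N)^{-1}$ (for $\beta=1$), or the $\omega(N)^{\beta-1}$ (for $\beta<1$) appearing in the estimates used in the proof of Lemma~\ref{lem:TCompl}.

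The first step is to write, using the conditional independence of the events $\{u\sim w\}$ and $\{v\sim w\}$ given the (marked) positions,
\[
\P(u\sim w, v\sim w \mid t_u,t_v,t_w) = \P(u\sim w \mid t_u,t_w)\,\P(v\sim w \mid t_v,t_w),
\]
and then to apply Lemma~\ref{lemma_2.4_evolution} to get the asymptotic order
\[
\P(u\sim w, v\sim w \mid t_u,t_v,t_w) \asymp \frac{(\ln A_{u,w})^{\delta}(\ln A_{v,w})^{\delta}}{A_{u,w}^{\beta'}\,A_{v,w}^{\beta'}},
\]
with $\beta'$ and $\delta$ as in \eqref{eq:beta_prime} and \eqref{eq:def_delta_1}. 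Since $\ln A_{u,w} \geq 1$ whenever $N$ is large enough (recall $t_u,t_v< R/2-\omega(N)$ so $A_{u,w} = e^{-(\zeta/2)(R-t_u-t_w)}$ is bounded below by a constant multiple of $\omega(N)$-type quantities), this quantity is at least a constant times $(\ln A_{v,w})^\delta/(A_{u,w}A_{v,w})^{\beta'}$.

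The second step is then to compare directly with the bound from Lemma~\ref{lem:TCompl}, which gives
\[
\E\bigl(T(u,v;w)(1-\vartheta(u,v;w)) \mid t_u,t_v,t_w\bigr) = o\!\left(\frac{(\ln A_{v,w})^\delta}{(A_{u,w}A_{v,w})^{\beta'}}\right).
\]
Dividing the two expressions, the polynomial factors cancel and the residual $\ln A_{u,w}$ in the denominator (when $\delta=1$) only helps. Hence the ratio tends to $0$ uniformly for $t_u, t_v, t_w \leq R/2-\omega(N)$, which is exactly the conclusion of the corollary.

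The only point that requires any care is verifying that the uniformity in $(t_u,t_v,t_w)$ asserted in Lemma~\ref{lem:TCompl} and in Lemma~\ref{lemma_2.4_evolution} is compatible (they both hold uniformly for types below $R/2-\omega(N)$), and that the $o(1)$ in Lemma~\ref{lem:TCompl} is genuinely uniform so that it survives after dividing by the lower bound on $\P(u\sim w,v\sim w \mid t_u,t_v,t_w)$; this is straightforward because the $o(1)$ comes from explicit negative powers of $\omega(N)$, which are independent of the types. There is no real obstacle here---the corollary is essentially a bookkeeping consequence of Lemma~\ref{lem:TCompl} once Lemma~\ref{lemma_2.4_evolution} is used to identify the denominator of the bound as the correct order of $\P(u\sim w, v\sim w \mid t_u,t_v,t_w)$.
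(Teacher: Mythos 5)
Your proposal is correct and is precisely the deduction the paper leaves implicit (the paper states the Corollary immediately after Lemma~\ref{lem:TCompl} with only the phrase ``The above lemma implies that \dots''): combine Lemma~\ref{lem:TCompl}'s bound $o\bigl((\ln A_{v,w})^\delta / (A_{u,w}A_{v,w})^{\beta'}\bigr)$ with the product form of $\P(u\sim w, v\sim w\mid t_u,t_v,t_w)$ from conditional independence and Lemma~\ref{lemma_2.4_evolution}, and note that the leftover factor $(\ln A_{u,w})^{-\delta}$ is bounded (indeed $\to 0$) because $t_u+t_w< R-2\omega(N)$ forces $\ln A_{u,w} \gtrsim \omega(N)\to\infty$. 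This is the same route as the paper; no gap.
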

Hence, for any $\beta > 0$ the contribution of these terms to $\E (\widehat{T}')$ is $o \left( \E (\widehat{\Lambda}) \right)$.

Thus we need to focus on the terms that are covered by Lemma~\ref{lem:TMain}.

\noindent
Using (\ref{c1_c2}), we write
\[
\frac{1}{A_{u,w}A_{v,w}}=\left({\nu \over N}\right)^2~\frac{e^{2\zeta t_w}}{C_{w,v}C_{w,u}}.
\]
Thus we can rewrite the right-hand sides of (\ref{prop:equ_prob_b>1}), (\ref{prop:equ_prob_b=1}) and (\ref{prop:equ_prob_b<1}) as follows:
\begin{equation}\label{Iuvw}
I(u,v;w):= \left({\nu \over N}\right)^2 \frac{e^{2\zeta t_w}}{C_{w,v}C_{w,u}}\int_{D_z} \frac{1}{z_1^\beta+1}\frac{1}{z_2^\beta+1}\frac{d z_1 d z_2}{  (C_{w,v} z_1+C_{w,u} z_2)^\beta+1 } ,
\end{equation}
where $D_z$ is the domain where $z_1$ and $z_2$ range; we have
{ 
$$D_z= \begin{cases}[0,\infty)^2, & \mbox{if $\beta > 1$} \\
 \bigl [(2\pi \omega(N))^{-1},A_{v,w}/2\bigr ]\times \bigl [(2\pi \omega(N))^{-1},A_{u,w}/2\bigr ], & \mbox{if $\beta=1 $} \\
\bigl [(\omega(N)/(2\pi), A_{v,w}/2\bigr ]\times \bigl [\omega(N)/(2\pi), A_{u,w}/2\bigr ] , & \mbox{if $\beta < 1$}
\end{cases}.
$$ }
The above together with Corollary~\ref{cor:typical_large_angles} imply the following statement.
\begin{Lemma} \label{lem:Final_Expressions}
For any distinct $u,v,w \in \V$ we have the following:\\ if $\beta>1$:
\[
\begin{split}
 \E & (T(u,v;w)\vartheta(u,v;w))=(1+o(1)) \int_{D_t} I(u,v;w) \bar{\rho}_N(t_u)\bar{\rho}_N(t_v)\bar{\rho}_N(t_w) dt_u dt_v dt_w;
\end{split}
\]
while if $\beta \leq 1$
\[
 \E (T(u,v;w)\vartheta(u,v;w)) \lesssim \int_{D_t} I(u,v;w) \bar{\rho}_N(t_u)\bar{\rho}_N(t_v)\bar{\rho}_N(t_w) dt_u dt_v dt_w.
\]
where $D_t:=[0, R/2-\omega(N)]^3$. Also, for any $\beta >0$
$$ N^3 \E (T(u,v;w)(1-\vartheta(u,v;w)))= o \left( \E (\widehat{\Lambda}) \right).  $$
\end{Lemma}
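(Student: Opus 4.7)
The plan is a straightforward unconditioning: use the tower property to pass from the conditional estimates established in Lemma~\ref{lem:TMain} and Corollary~\ref{cor:typical_large_angles} to the stated unconditional ones. Since the three vertices $u,v,w$ are i.i.d.\ with type density $\bar{\rho}_N$ given by~\eqref{def_bar_rho}, and since the indicator $T(u,v;w)$ forces $t_u,t_v,t_w<R/2-\omega(N)$, one has
\[
\E(T(u,v;w)\vartheta(u,v;w)) = \int_{D_t} \E\bigl(T(u,v;w)\vartheta(u,v;w) \mid t_u,t_v,t_w\bigr)\,\bar{\rho}_N(t_u)\bar{\rho}_N(t_v)\bar{\rho}_N(t_w)\,dt_u\,dt_v\,dt_w.
\]
First I would substitute the estimates from Lemma~\ref{lem:TMain} into the integrand. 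The algebraic identity $1/(A_{u,w}A_{v,w})=(\nu/N)^2\,e^{2\zeta t_w}/(C_{w,v}C_{w,u})$, which follows from~\eqref{def_A} and~\eqref{c1_c2}, converts the right-hand sides of~\eqref{prop:equ_prob_b>1}, \eqref{prop:equ_prob_b=1}, \eqref{prop:equ_prob_b<1} into $I(u,v;w)$ from~\eqref{Iuvw}, times a factor that is $(1+o(1))/\pi^2$ for $\beta>1$ and bounded by a universal constant for $\beta\leq 1$. Provided the $(1+o(1))$ is uniform over $D_t$, it can be pulled outside the integral, giving the first two displayed formulas of the lemma.

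For the last assertion, Corollary~\ref{cor:typical_large_angles} gives, uniformly over $t_u,t_v,t_w \in D_t$,
\[
\E\bigl(T(u,v;w)(1-\vartheta(u,v;w)) \mid t_u,t_v,t_w\bigr) = o\!\bigl(\P(u\sim w,\,v\sim w \mid t_u,t_v,t_w)\bigr).
\]
Integrating this against $\bar{\rho}_N(t_u)\bar{\rho}_N(t_v)\bar{\rho}_N(t_w)$ over $D_t$ and pulling the uniform $o(1)$ outside yields
\[
\E\bigl(T(u,v;w)(1-\vartheta(u,v;w))\bigr) = o\!\bigl(\P(\Lambda(u,v;w))\bigr),
\]
since the latter probability is exactly the integral of $\P(u\sim w,v\sim w \mid \cdot)$ against the product of type densities over $D_t$, as was used to derive Proposition~\ref{prop:incomplete_triangles}. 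Multiplying by $N^3$ and using $\E(\widehat{\Lambda}) = 3\binom{N}{3}\P(\Lambda(u,v;w)) \asymp N^3\,\P(\Lambda(u,v;w))$ finishes the argument.

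The main obstacle I anticipate is verifying the uniformity of the $(1+o(1))$ in Lemma~\ref{lem:TMain} when $\beta>1$, which is what legitimises pulling it out of the integral over $D_t$. Concretely, when one extends the inner double integral from $[\omega(N)^{-1}/2,\omega(N)/2]^2$ to $[0,\infty)^2$, the tail contributions must be shown to be $o(1)$ \emph{relative to the whole integral} uniformly in $C_{w,v}$ and $C_{w,u}$, even though these coefficients can be exponentially large or small in $R$. This is handled by bounding the tails by constant multiples of $(C_{w,v}\omega(N))^{-(\beta-1)}+(C_{w,u}\omega(N))^{-(\beta-1)}$ and observing that the dominant contribution to the limiting integral comes from a region whose measure does not degenerate as the $C$'s vary; since $\omega(N)\to\infty$, the ratio of tail to main mass tends to zero uniformly. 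Everything else is routine bookkeeping.
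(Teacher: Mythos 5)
Your proposal is correct and matches the paper's own (implicit) route: the paper gives no separate argument for this lemma, stating it as an immediate consequence of Lemma~\ref{lem:TMain}, the identity $1/(A_{u,w}A_{v,w})=(\nu/N)^2 e^{2\zeta t_w}/(C_{w,v}C_{w,u})$ that packages the conditional bounds as $I(u,v;w)$, and Corollary~\ref{cor:typical_large_angles}; your tower-property unconditioning is exactly that missing step. The uniformity concern in your final paragraph is a genuine subtlety that the paper elides with the one-line justification that the infinite $z$-integral is finite, and your instinct to compare after integrating against the type densities (where $\int_{D_t}I\,\bar{\rho}_N$ and $\int_{D_t}(A_{u,w}A_{v,w})^{-1}\bar{\rho}_N$ are both $\asymp N^{-2}$ in the regimes, namely $\zeta/\alpha<1$ or types bounded by a constant $t$, where the equality is actually invoked) is the right way to make the $1+o(1)$ assertion rigorous.
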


\subsection{Proof of Theorem \ref{thm:typicalglobalclustering}} \label{sec:proof_of_theorem_2}

{ Recall that $\beta>1$.
Let $t<R$ be a positive constant, and consider $N$ and $R$ so large, that $R/2-\omega(N)>t$.} 
Let $\widehat{T}_t$ denote the number of triangles in $\G(N;\zeta,\alpha,\beta,\nu)$ whose vertices have type at most $t$.
{ Note that in this part the triangles are automatically typical, hence $\widehat{T}_t=T_t $ a.a.s..}
Similarly, we let $\widehat{\Lambda}_t$ denote the number of incomplete triangles all of whose vertices have type at most $t$.
For three distinct vertices $u,v$ and $w$ we let $T_t(u,v;w)$ be defined as $T(u,v;w)$ but with the additional restriction that
$t_u,t_v,t_w \leq t$.

Finally, let $\widehat{T}'_t$ be defined as $\widehat{T}'$ with the variables $T(u,v;w)$ replaced by $T_t(u,v;w)$.
Note that the analogue of (\ref{eq:equivalence}) holds  between $\widehat{T}_t$ and $\widehat{T}_t'$.

By Lemma~\ref{lemma_2.4_evolution}, it follows that
\begin{equation} \label{eq:Lambda_t_Exp}
\begin{split}
\E & \left( \widehat{\Lambda}_t \right) = (1+o(1))3 {N \choose 3}\left(C_\beta\right)^2 \int_{[0,t]^3} {1\over A_{w,u} A_{w,v}} \bar{\rho}_{N}(t_u)
\bar{\rho}_{N}(t_v) \bar{\rho}_{N}(t_w) dt_u dt_v dt_w \\
&\stackrel{Claim~\ref{clm:density_approx}}{=} (1+o(1)) { \frac{1}{2}}N \left(\nu C_{\beta}\right)^2  \alpha^3
 \int_{[0,t]^3} e^{{\zeta \over 2}(t_u + t_v) + \zeta t_w} e^{-\alpha (t_u + t_v + t_w)} dt_u dt_v dt_w.
\end{split}
\end{equation}
One can show that $\widehat{\Lambda}_t$ is concentrated around its expected value through a second moment argument that is
very similar to that in Section~\ref{sect:2nd_moment} and we omit.
Thus,
\begin{equation} \label{eq:Lambda_t}
 \widehat{\Lambda}_t = \E \left( \widehat{\Lambda}_t \right) (1+o_p(1)).
\end{equation}
Regarding $\widehat{T}_t$, we use the following fact.
\begin{Claim}
For $\beta > 1$ and for $N$ large enough we have
\[
 \widehat{T}_t' - \widehat{T}_t \leq  \sum_{w \in \V} \sum_{\stackrel{(u,v): }{ u,v \in \V \setminus \{ w\} }} 
T_t(u,v;w) \left( 1 - \vartheta(u,v;w) \right).
\]
\end{Claim}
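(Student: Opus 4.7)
The plan is to rewrite the stated inequality in an equivalent form and then reduce it to a per-triangle counting argument. Since $\widehat{T}_t'=\sum_{w,(u,v)}T_t(u,v;w)$ by definition \eqref{eq:hatT'def}, the claim is equivalent to
\[
\sum_{w\in \V}\sum_{\substack{u,v\in \V\setminus\{w\}}} T_t(u,v;w)\,\vartheta(u,v;w) \;\leq\; \widehat{T}_t.
\]
It therefore suffices to show that every triangle of $\G(N;\zeta,\alpha,\beta,\nu)$ whose three vertices have type at most $t$ contributes at most one triple $(u,v;w)$ with $T_t(u,v;w)\,\vartheta(u,v;w)=1$ to the left-hand side.

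I would argue as follows. Fix such a triangle and assume that some triple $(u_0,v_0;w_0)$ achieves $T_t(u_0,v_0;w_0)\,\vartheta(u_0,v_0;w_0)=1$. Since each type is bounded by the constant $t$, definition \eqref{def_A} gives $A_{u_0,w_0},\,A_{v_0,w_0}\gtrsim e^{\zeta(R-2t)/2}\asymp N$. For $\beta>1$ we have $\hat{\theta}_{u_0,w_0}=\omega(N)/A_{u_0,w_0}$ and $\hat{\theta}_{v_0,w_0}=\omega(N)/A_{v_0,w_0}$, both of which are $O(\omega(N)/N)=o(1)$. Hence $\vartheta(u_0,v_0;w_0)=1$ forces $|\vartheta_{u_0,w_0}|$ and $|\vartheta_{v_0,w_0}|$ to be $o(1)$, so that for $N$ sufficiently large (depending only on $t,\zeta,\alpha,\beta$) the three vertices lie in a common angular arc of length less than, say, $\pi/3$.

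Within such a short arc the linear order of the three vertex angles is unambiguous. Labelling the vertices $p_1,p_2,p_3$ counterclockwise in the arc, for pivot $w=p_1$ both signed relative angles $\vartheta_{p_2,p_1},\vartheta_{p_3,p_1}$ are positive, while for pivot $w=p_3$ both are negative. In either case the sign requirement of $T$, namely $\vartheta_{u,w}\geq 0$ and $\vartheta_{v,w}\leq 0$ for two distinct non-pivot vertices, is violated, so $T(\cdot,\cdot;p_1)=T(\cdot,\cdot;p_3)=0$ for either ordering. Only $w=p_2$ can serve as pivot, and even there only the ordering $(u,v)=(p_3,p_1)$ satisfies the sign condition. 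Therefore at most one admissible triple per triangle contributes, proving the desired inequality.

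The main obstacle will be to formalise carefully that the $o(1)$ angular bound really forces the three vertices into a single short arc uniformly for large $N$. This hinges only on the fact that $t$ is a fixed constant while $\omega(N)/N\to 0$, so that the wrap-around configurations responsible for tightness of the weaker bound $\widehat{T}_t'\leq 3\,\widehat{T}_t$ are excluded at scale $\hat{\theta}$. Once this uniformity is stated, the remainder of the argument is elementary and purely combinatorial.
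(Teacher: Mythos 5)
Your proposal is correct and follows essentially the same route as the paper: rewrite the inequality as $\sum_{w}\sum_{(u,v)} T_t(u,v;w)\vartheta(u,v;w)\leq \widehat{T}_t$, observe that $\hat\theta_{u,w},\hat\theta_{v,w}$ are $o(1)$ for $\beta>1$ when types are bounded, and conclude that at most one of the six ordered triples of a given triangle can have $\vartheta=1$. Your explicit counterclockwise labelling $p_1<p_2<p_3$ within the short arc is a clean way to justify the combinatorial step, which the paper simply asserts (listing the five other orderings as having $\vartheta=0$) after noting the $o(1)$ bound.
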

\begin{proof}
We write
\begin{equation} \label{eq:T'}
\begin{split}
\widehat{T}_t' &= \sum_{w \in \V} \sum_{\stackrel{(u,v): }{ u,v \in \V \setminus \{ w\} }} T_t (u,v;w) \\
& = \sum_{w \in \V} \sum_{\stackrel{(u,v): }{ u,v \in \V \setminus \{ w\} }} T_t (u,v;w) \left( \vartheta (u,v;w) + 1- \vartheta (u,v;w) \right).
\end{split}
\end{equation}
The definitions in (\ref{tilda_hat_definition}) imply that for $\beta > 1$ we have $\hat{\theta}_{u,w}, \hat{\theta}_{v,w},
 \hat{\theta}_{u,v}=o(1)$.  Therefore, for $N$ sufficiently large, if $\vartheta(u,v;w)=1$, then $\vartheta(u,w;v) = \vartheta(v,w;u) = 0$.
Furthermore, if $\vartheta (u,v;w) = 1$, then $\vartheta (v,u;w) = 0$ and also $\vartheta (w,u;v) = \vartheta (w,v;u) = 0$.

Hence, if $T_t(u,v,w)$ denotes the indicator random variable that is equal to 1 if and only if the vertices $u,v,w$ form a triangle and all have types at most $t$, we have
\begin{equation} \label{eq:T}
\begin{split}
\widehat{T}_t & \geq \sum_{w \in \V} \sum_{\stackrel{(u,v): }{ u,v \in \V \setminus \{ w\} }} T_t (u,v,w)  \vartheta (u,v;w) \\
& \geq \sum_{w \in \V} \sum_{\stackrel{(u,v): }{ u,v \in \V \setminus \{ w\} } } T_t (u,v;w)  \vartheta (u,v;w).
\end{split}
\end{equation}
Now, subtracting (\ref{eq:T}) from (\ref{eq:T'}), the claim follows.
\end{proof}

Therefore, by Corollary~\ref{cor:typical_large_angles}, we have
\begin{equation} \label{eq:Exp_differences}
\E \left ( \widehat{T}_t' - \widehat{T}_t \right) = o \left( \E (\widehat{\Lambda}_t) \right).
\end{equation}
Now, by (\ref{prop:equ_prob_b>1}) in Lemma~\ref{lem:TMain} we have
\begin{equation} \label{eq:T'_t_Exp}
\begin{split}
\E \left( \widehat{T}_t' \right) &= (1+o(1))6 {N \choose 3}{1\over \pi^2}
 \int_{[0,t]^3} I(u,v;w)\bar{\rho}_{N}(t_u)
\bar{\rho}_{N}(t_v) \bar{\rho}_{N}(t_w) dt_u dt_v dt_w \\
&\stackrel{Claim~\ref{clm:density_approx}}{=} (1+o(1)) N  \left({\nu \over \pi} \right)^2  \alpha^3
 \int_{[0,t]^3} e^{{\zeta \over 2}(t_u + t_v) + \zeta t_w} e^{-\alpha (t_u + t_v + t_w)} \times \\
&\left[\int_{D_z} \frac{1}{z_1^\beta+1}\frac{1}{z_2^\beta+1}\frac{1}{  (C_{w,v} z_1+C_{w,u} z_2)^\beta+1 } \, d z_1 d z_2 \right]
dt_u dt_v dt_w.
\end{split}
\end{equation}
The concentration of $\widehat{T}_t$ around its expected value can be shown using a second moment argument similar to that used in
Lemma~\ref{ET2} (we omit the details), from which we deduce that 
$$\widehat{T}_t = \E \left( \widehat{T}_t \right) (1+o_p(1)).  $$
But~(\ref{eq:Exp_differences}) implies that $\E \left( \widehat{T}_t \right) = \E \left( \widehat{T}_t' \right) + o \left( \E (\widehat{\Lambda}_t) \right)$. As $ \E (\widehat{\Lambda}_t) = \Theta \left( \E \left( \widehat{T}_t' \right) \right)$ we deduce that 
$$ \widehat{T}_t = \E \left( \widehat{T}_t' \right) (1+o_p(1)). $$
This combined with (\ref{eq:Lambda_t}) imply the statement of Theorem~\ref{thm:typicalglobalclustering}.

The value of $L_\infty (\beta,\zeta,\alpha)$ is also deduced as above using Proposition~\ref{prop_concentration} and taking 
$t=R/2 - \omega (N)$ which 
is equivalent (up to a $1+o(1)$ factor) to taking the integrals up to $t=\infty$.

\section{Proof of Proposition~\ref{prop:complete_triangles}}\label{sect:proof_prop_22}
In this section we prove separately the results for $\beta>1$, $\beta=1$ and $\beta<1$.
For $\beta\leq 1$, we will give only an upper bound { for $\E (\widehat{T})$} (cf. Sections~\ref{sect_proof=1},~\ref{sect_proof<1}). For $\beta > 1$, we
will consider two cases, namely $\zeta/ \alpha < 1$ and $\zeta /\alpha \geq 1$. In the former, we will show that
$\E (\widehat{T}) \asymp \E (\widehat{\Lambda})$. Note that the upper bound holds trivially, as $3\widehat{T} \leq  \widehat{\Lambda}$.
We will deduce only a matching lower bound in the next section. For $\zeta /\alpha \geq 1$, we will deduce an upper bound.

\subsection{Proof of Proposition~\ref{prop:complete_triangles}(i) ($\beta >1$)}\label{sect_proof>1}

\subsection*{Case $\zeta/\alpha < 1$}
We will deduce a lower bound on $\E (T(u,v;w)\vartheta(u,v;w))$ integrating $I(u,v;w)$ over the sub-domain of $D_t$ which
is $D_t':= \{(t_u,t_v,t_w) \ : \ 0 < t_u,t_v < t_w \}$. Note that in this case $C_{w,u}, C_{w,v} > 1$. Hence, we can bound
from below the double integral that appears in (\ref{Iuvw}) as follows:
\begin{equation*} \label{eq:Integral_Low}
\begin{split}
& \int_0^{\infty} \int_0^{\infty}\frac{1}{z_1^\beta+1}\frac{1}{z_2^\beta+1}\frac{1}{  (C_{w,v} z_1+C_{w,u} z_2)^\beta+1 } \, d z_1 d z_2  \\
& \geq \int_0^{\infty} \int_0^{\infty}
\frac{1}{(C_{w,v}z_1)^\beta+1}\frac{1}{(C_{w,u}z_2)^\beta+1}\frac{1}{  (C_{w,v} z_1+C_{w,u} z_2)^\beta+1 } \, d z_1 d z_2 \\
& = {1\over C_{w,v} C_{w,u}}~ \int_0^{\infty} \int_0^{\infty} {1\over x_1^\beta + 1}~{1\over x_2^\beta + 1}
\frac{1}{(x_1 + x_2)^\beta+1} dx_1 dx_2.
\end{split}
\end{equation*}
Therefore,
\begin{equation*}
I(u,v;w) \geq \left({\nu \over N} \right)^2 {e^{2\zeta t_w} \over C_{w,v}^2 C_{w,u}^2}
\int_0^{\infty} \int_0^{\infty} {1\over x_1^\beta + 1}~{1\over x_2^\beta + 1} \frac{1}{(x_1 + x_2)^\beta+1} dx_1 dx_2,
\end{equation*}
which in turn yields
\begin{equation*}
\begin{split}
\E (T(u,v;w)\vartheta(u,v;w)) \geq &
\left({\nu \over N} \right)^2
\left[ \int_{[0,\infty)^2} {1\over x_1^\beta + 1}~{1\over x_2^\beta + 1} \frac{ dx_1 dx_2}{(x_1 + x_2)^\beta+1} \right] \times
\\ &\int_{D_t'} {e^{2\zeta t_w} \over C_{w,v}^2 C_{w,u}^2} \bar{\rho}_N(t_u)\bar{\rho}_N(t_v)\bar{\rho}_N(t_w) dt_u dt_v dt_w.
\end{split}
\end{equation*}
We will show that for $\zeta /\alpha < 1$, the latter integral is $\Omega (1)$.
Indeed, we have
$$ {e^{2\zeta t_w} \over C_{w,v}^2 C_{w,u}^2} = e^{\zeta (t_u + t_v)},$$
whereby using Claim~\ref{clm:density_approx} (for large $N$) we obtain
\[
\begin{split}
& \int_{D_t'} {e^{2\zeta t_w} \over C_{w,v}^2 C_{w,u}^2} \bar{\rho}_N(t_u)\bar{\rho}_N(t_v)\bar{\rho}_N(t_w) dt_u dt_v dt_w  \gtrsim
{1\over 2} \int_{D_t'} e^{\zeta (t_u + t_v) - \alpha (t_u + t_v + t_w)} dt_u dt_v dt_w \\
& \asymp \int_0^{R/2 - \omega (N)} \int_0^{t_w} \int_0^{t_w} e^{(\zeta -\alpha)(t_u + t_v) - \alpha t_w} dt_u dt_v dt_w \\
& = \int_0^{R/2 - \omega (N)} \left[ \int_0^{t_w} e^{(\zeta -\alpha) t_u} dt_u \right]^2 e^{-\alpha t_w} dt_w \, \gtrsim\,  1.
\end{split}
\]
Thus, after recalling the definition of $\vartheta(u,v;w)$ from (\ref{eq:def_vartheta}), we deduce that
$$ \E (T(u,v;w)\vartheta(u,v;w)) \gtrsim {1\over N^2} \quad  \Rightarrow \quad \E (\widehat{T}') \gtrsim {N \choose 3} \E (T(u,v;w)\vartheta(u,v;w)) \gtrsim N.$$
Now (\ref{eq:equivalence}) implies that
$$ \E (\widehat{T}) \gtrsim N. $$

\subsection*{Case $1\leq \zeta/\alpha < 2$}

In this range we provide an upper bound on $ \E (\widehat{T'})$ and show that it is $o \left( \E (\widehat{\Lambda}) \right)$.
By (\ref{eq:equivalence}), this is clearly enough to deduce the second part of Proposition~\ref{prop:complete_triangles}(i).
We write
\[
\begin{split}
\E (\widehat{T}') & \leq N^3  \E (T(u,v;w)) \\
& = N^3 \left( \E (T(u,v;w)\vartheta(u,v;w)) +  \E (T(u,v;w)(1-\vartheta(u,v;w))) \right),
\end{split}
\]
where $u,v,w$ are three distinct vertices.
By the second part of Lemma~\ref{lem:Final_Expressions} the second term is $o\left(\E (\widehat{\Lambda})\right)$.
We will also show that
\begin{equation} \label{eq:1stTerm_ToShow}
N^3 \left( \E (T(u,v;w)\vartheta(u,v;w) \right) = o\left(\E (\widehat{\Lambda})\right).
\end{equation}

To this end, we split the domain of the integral of $I(u,v;w)$ into three sub-domains and bound $I(u,v;w)$ separately on each one of them.
In particular, we define
\[
\begin{split}
D_t^{(1)} & := \{ (t_u,t_v,t_w) \ : \ t_u, t_v > t_w \} \\
D_t^{(2)} & := \{ (t_u,t_v, t_w)\ : \ t_u \leq t_w \} \\
D_t^{(3)} & := \{ (t_u, t_v, t_w) \ : \ t_v \leq t_w\}.
\end{split}
\]
It is clear that the last two sub-domains are not disjoint but as we are interested only in upper bounds this does not create any issues. It is immediate to see that
\begin{equation} \label{eq:ToProve}
\begin{split}
\E (T(u,v;w)\vartheta(u,v;w)) & \leq \int_{D_t^{(1)}}  I(u,v;w) \bar{\rho}_N(t_u)\bar{\rho}_N(t_v)\bar{\rho}_N(t_w) dt_u dt_v dt_w \\
&
+ \int_{D_t^{(2)}}  I(u,v;w) \bar{\rho}_N(t_u)\bar{\rho}_N(t_v)\bar{\rho}_N(t_w) dt_u dt_v dt_w  \\
& +\int_{D_t^{(3)}}  I(u,v;w) \bar{\rho}_N(t_u)\bar{\rho}_N(t_v)\bar{\rho}_N(t_w) dt_u dt_v dt_w.
\end{split}
\end{equation}
We will bound from above each one of these three integrals. In fact, we will do so only for the first two -- the last one
can be treated exactly as the second one.

To bound the first integral, we use the following upper bound on $I (u,v;w)$:
\begin{equation*}
I (u,v;w) \leq \left({\nu \over N} \right)^2 {e^{2\zeta t_w} \over C_{w,v} C_{w,u}} \int_{D_z} {1\over z_1^{\beta} + 1} {1\over z_2^\beta +1} d z_1 d z_2.
\end{equation*}
Also,
$$  {e^{2\zeta t_w} \over C_{w,v} C_{w,u}}  = e^{\zeta t_w + {\zeta \over 2} (t_u + t_v)}.$$
We now integrate this quantity over $D_t^{(1)}$ applying Claim~\ref{clm:density_approx} as
follows
\[
\begin{split}
& \int_{D_t^{(1)}} e^{\zeta t_w + {\zeta \over 2} (t_u + t_v)}  \bar{\rho}_N(t_u)\bar{\rho}_N(t_v)\bar{\rho}_N(t_w) dt_u dt_v dt_w  \\
& \lesssim \int_0^{R/2 -\omega (N)} \int_{t_w}^{R/2 - \omega (N)} \int_{t_w}^{R/2 - \omega (N)}
e^{\zeta t_w + {\zeta \over 2} (t_u + t_v) - \alpha (t_u + t_v + t_w)} dt_u dt_v dt_w \\
&=
\int_0^{R/2 -\omega (N)} e^{\left( \zeta - \alpha \right) t_w} \left[ \int_{t_w}^{R/2 - \omega (N)}  e^{\left( \zeta /2 - \alpha \right) t_u} dt_u \right]^2 dt_w \\
& \stackrel{\zeta /\alpha < 2}{\lesssim}
\int_0^{R/2 -\omega (N)} e^{\left( \zeta - \alpha \right) t_w + 2 \left(\zeta/2 - \alpha \right)t_w}  dt_w =
\int_0^{R/2 -\omega (N)} e^{\left( 2\zeta - 3\alpha \right) t_w}  dt_w.
\end{split}
\]
If $\zeta/\alpha < 3/2$, then the above integral is $O(1)$, whereas if $\zeta /\alpha = 3/2$, then this is $O(R)$. Finally,
when $\zeta /\alpha > 3/2$, this is $O(N^{2 - 3 \alpha /\zeta})$. Hence,
\[
\begin{split}
\int_{D_t^{(1)}} & e^{\zeta t_w + {\zeta \over 2} (t_u + t_v)}  \bar{\rho}_N(t_u)\bar{\rho}_N(t_v)\bar{\rho}_N(t_w) dt_u dt_v dt_w
\lesssim
\begin{cases}
1, & \mbox{if $\frac{\zeta}{\alpha} = 1$} \\
N^{2 - 3\alpha /\zeta}, & \mbox{if $1 < \frac{\zeta }{\alpha }< 2$}
\end{cases}.
\end{split}
\]
Therefore,
\begin{equation} \label{eq:Part_I}
\begin{split}
N^3 & \int_{D_t^{(1)}}  I(u,v;w) \bar{\rho}_N(t_u)\bar{\rho}_N(t_v)\bar{\rho}_N(t_w) dt_u dt_v dt_w\\
&  \lesssim
\begin{cases}
N, & \mbox{if $\zeta /\alpha = 1$} \\
N^{3 - 3\alpha /\zeta}, & \mbox{if $1 < \zeta /\alpha < 2$}
\end{cases}.
\end{split}
\end{equation}
Note that both quantities are $o\left(\E ( \widehat{\Lambda} ) \right)$ (to see the latter note that
$3 - 3\alpha /\zeta < 2 - \alpha /\zeta$ which is equivalent to $1 < 2\alpha /\zeta$, that is, $\zeta /\alpha < 2$).

Now we consider the second integral in (\ref{eq:ToProve}). Note that on the sub-domain $D_t^{(2)}$ we have $C_{w,u}\geq 1$.
In this case, we bound the integral in (\ref{Iuvw}) as follows:
\[
\begin{split}
& \int_0^{\infty} \int_0^{\infty}
\frac{1}{z_1^\beta+1}\frac{1}{z_2^\beta+1}\frac{1}{  (C_{w,v} z_1+C_{w,u} z_2)^\beta+1 } \, d z_1 d z_2 \\
&
\leq \int_0^{\infty} \int_0^{\infty} \frac{1}{z_1^\beta+1}\frac{1}{  (C_{w,u} z_2)^\beta+1 } \, d z_1 d z_2 \\
&= {1\over C_{w,u}} \int_0^{\infty} \int_0^{\infty} \frac{1}{z_1^\beta+1}\frac{1}{  x_2^\beta+1 } \, d z_1 d x_2
= {1\over C_{w,u}} \left[ \int_0^{\infty}\frac{1}{z_1^\beta+1} dz_1 \right]^2.
\end{split}
\]
Hence,
\begin{equation}\label{eq:IUpper} I(u,v;w) \leq \left( {\nu \over N} \right)^2 {e^{2\zeta t_w} \over C_{w,v} C_{w,u}^2}
\left[ \int_0^{\infty}\frac{1}{z_1^\beta+1} dz_1 \right]^2,
\end{equation}
and
$$ {e^{2\zeta t_w} \over C_{w,v} C_{w,u}^2} = e^{{\zeta \over 2}(t_w + t_v) + \zeta t_u}.$$
To bound the second integral in (\ref{eq:ToProve}) we need to bound the integral of the above quantity over $D_t^{(2)}$.
Using Claim~\ref{clm:density_approx}, we have
\begin{equation*}
\begin{split}
&\int_{D_t^{(2)}} e^{{\zeta \over 2}(t_w + t_v) + \zeta t_u} \bar{\rho}_N(t_u)\bar{\rho}_N(t_v)\bar{\rho}_N(t_w) dt_u dt_v dt_w  \\
&\asymp \int_0^{R/2 - \omega (N)} \int_0^{R/2 - \omega (N)} e^{\left(\zeta/2  -\alpha \right) (t_w + t_v)} \left[ \int_{0}^{t_w} e^{(\zeta - \alpha)t_u} dt_u  \right] dt_v dt_w \\
&= \left[\int_0^{R/2 - \omega (N)} e^{\left( \zeta/2 -\alpha \right)t_v} dt_v \right]
\int_0^{R/2 - \omega (N)} e^{\left(\zeta/2  -\alpha \right) t_w} \left[ \int_{0}^{t_w} e^{(\zeta - \alpha)t_u} dt_u  \right] dt_w \\
&\stackrel{\zeta/\alpha < 2}{\asymp}  \int_0^{R/2 - \omega (N)} e^{\left(\zeta/2  -\alpha \right) t_w} \left[ \int_{0}^{t_w} e^{(\zeta - \alpha)t_u} dt_u  \right] dt_w.
\end{split}
\end{equation*}
Now, when $\zeta /\alpha = 1$, the above yields:
\begin{equation*}
\begin{split}
& \int_{D_t^{(2)}} e^{{\zeta \over 2}(t_w + t_v) + \zeta t_u} \bar{\rho}_N(t_u)\bar{\rho}_N(t_v)\bar{\rho}_N(t_w) dt_u dt_v dt_w \\
&  \asymp \int_0^{R/2 - \omega (N)} t_w e^{\left(\zeta/2  -\alpha \right) t_w} dt_w \asymp 1,
\end{split}
\end{equation*}
which by (\ref{eq:IUpper}) implies that
\begin{equation} \label{eq:2ndTerm}
\begin{split}
N^3 \int_{D_t^{(2)}}  I(u,v;w) \bar{\rho}_N(t_u)\bar{\rho}_N(t_v)\bar{\rho}_N(t_w) dt_u dt_v dt_w  \lesssim N =
o \left( \E (\widehat{\Lambda})\right).
\end{split}
\end{equation}
If $1<\zeta /\alpha < 2$, then
\begin{equation*}
\begin{split}
&\int_{D_t^{(2)}} e^{{\zeta \over 2}(t_w + t_v) + \zeta t_u} \bar{\rho}_N(t_u)\bar{\rho}_N(t_v)\bar{\rho}_N(t_w) dt_u dt_v dt_w  \\
& \lesssim \int_0^{R/2 - \omega (N)}e^{\left(3\zeta/2  -2\alpha \right) t_w} dt_w \asymp
\begin{cases}
1, & \mbox{if $1 <\zeta/\alpha < 4/3$} \\
R, & \mbox{if $\zeta/\alpha = 4/3$} \\
N^{3/2 - 2 \alpha/\zeta}, & \mbox{if $4/3 < \zeta /\alpha < 2$}
\end{cases}.
\end{split}
\end{equation*}
Therefore,
\begin{equation} \label{eq:2ndTerm_II}
\begin{split}
N^3 \int_{D_t^{(2)}}  I(u,v;w) \bar{\rho}_N(t_u)\bar{\rho}_N(t_v)\bar{\rho}_N(t_w) dt_u dt_v dt_w  \lesssim { R}N^{5/2 - 2\alpha /\zeta}.
\end{split}
\end{equation}
The latter is $o \left( \E (\widehat{\Lambda})\right)$, since $5/2 - 2\alpha /\zeta < 2 - \alpha /\zeta$ (which is equivalent to
$1/2 < \alpha /\zeta$, that is, $\zeta/\alpha <2$).
Hence, (\ref{eq:Part_I}), (\ref{eq:2ndTerm}), (\ref{eq:2ndTerm_II}) together with (\ref{eq:ToProve}) imply (\ref{eq:1stTerm_ToShow}).

\begin{Conjecture}
From the calculations seen in this section, we get two possible values for $\zeta/\alpha$ where the probability of triangles might have a sharp phase transition. We conjecture that the value where this is happening is $\zeta/\alpha=3/2$.
\end{Conjecture}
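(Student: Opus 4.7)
The plan is to establish the conjecture by proving $\E(\widehat{T}) \asymp N$ (up to polylogarithmic corrections) throughout the range $1 \leq \zeta/\alpha \leq 3/2$, and $\E(\widehat{T}) \asymp N^{3-3\alpha/\zeta}$ for $3/2 < \zeta/\alpha < 2$. This would identify $3/2$ as the sharp phase transition value by showing that the two candidate bounds arising from the analysis of Section~\ref{sect_proof>1} coincide at the true transition only because $D_t^{(1)}$ provides the dominant contribution, while the $D_t^{(2)}$ and $D_t^{(3)}$ bounds are loose between $4/3$ and $3/2$.

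The first step is to sharpen the upper bound on the contribution from $D_t^{(2)} = \{(t_u,t_v,t_w) : t_u \leq t_w\}$ (and symmetrically $D_t^{(3)}$). The estimate in Section~\ref{sect_proof>1} discards the factor $\bigl((C_{w,v}z_1 + C_{w,u}z_2)^\beta + 1\bigr)^{-1}$ entirely, which is precisely what encodes the $(u,v)$-adjacency. I would partition $D_t^{(2)}$ according to $\operatorname{sign}(t_v - t_w)$. The sub-piece $\{t_u, t_v \leq t_w\}$ admits a $D_t^{(1)}$-type treatment after relabeling the lowest-type vertex as the pivot. On the complementary piece $\{t_u \leq t_w < t_v\}$, the substitution $x_i = C_{w,u} z_i$ pulls the dominant exponential out of the integrand; combined with the pointwise bound $(C_{w,v} z_1 + C_{w,u} z_2)^\beta + 1 \gtrsim (x_1 + x_2)^\beta + 1$ (which uses $C_{w,v} \geq C_{w,u}$ since $t_v > t_u$), this retains enough of the third factor to recover the $D_t^{(1)}$ exponent $N^{3-3\alpha/\zeta}$.

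For the matching lower bound when $\zeta/\alpha > 3/2$, I would restrict attention to triples with $t_w$ in a bounded interval $[0,C]$ and $t_u, t_v$ concentrated in a window just below $R/2 - \omega(N)$, where Lemma~\ref{lem:probs} and Lemma~\ref{lemma_2.4_evolution} yield explicit estimates for each of the three pairwise adjacency probabilities. Counting such configurations via Claim~\ref{clm:density_approx} and integrating against the exponential type density $e^{-\alpha(t_u+t_v+t_w)}$ should deliver $\E(\widehat{T}) \gtrsim N^{3-3\alpha/\zeta}$, which matches the upper bound. For $1 \leq \zeta/\alpha \leq 3/2$, the lower bound $\E(\widehat{T}) \gtrsim N$ is immediate from counting triangles with all three vertices of $O(1)$ type.

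The main obstacle is the sharpened upper bound on the $\{t_u \leq t_w < t_v\}$ sub-piece: the independent-edge bound $\prod p_e \lesssim \prod A_e^{-\beta}$ inevitably overshoots, because the three edges of a triangle share angular constraints hidden in Fact~\ref{Fact_I}. A promising alternative is to revisit the angular integration directly, bypassing the $z$-substitution: after fixing $(t_u, t_v, t_w)$ and conditioning on $\theta_{u,w}$ and $\theta_{v,w}$, the relative angle $\theta_{u,v}$ is essentially forced to be of order $|\theta_{u,w} \pm \theta_{v,w}|$ up to corrections of order $\bar{\theta}_{u,v}$, so that $p_{u,v}$ is automatically small precisely in the configurations that dominate the naive $D_t^{(2)}$ estimate. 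Formalising this correlation, and checking it uniformly across the range $4/3 < \zeta/\alpha < 3/2$, is the essential analytic step; once done it should confirm $\zeta/\alpha = 3/2$ as the sharp transition point, with the $\zeta/\alpha = 4/3$ candidate turning out to be an artefact of an over-crude bound.
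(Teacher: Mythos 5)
Note first that this statement is a \emph{conjecture}, explicitly left open by the paper, so there is no authors' proof to compare against; what follows assesses whether your plan could close the gap. Your overall strategy---sharpen the $D_t^{(2)},D_t^{(3)}$ upper bounds, supply a matching lower bound, thereby pinning down the transition at $\zeta/\alpha=3/2$ rather than $4/3$---is the right program, but both halves as written contain errors. In the upper bound, the claimed pointwise inequality fails with a sign error: on $\{t_u\leq t_w<t_v\}$ you have $C_{w,v}=e^{\frac{\zeta}{2}(t_w-t_v)}<e^{\frac{\zeta}{2}(t_w-t_u)}=C_{w,u}$ (the inequality is reversed from what you wrote), so $(C_{w,v}z_1+C_{w,u}z_2)^\beta+1\gtrsim (C_{w,u}z_1+C_{w,u}z_2)^\beta+1$ does \emph{not} hold, and the substitution $x_i=C_{w,u}z_i$ does not recover the $D_t^{(1)}$ exponent. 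In the lower bound, restricting $t_w\in[0,C]$ with $t_u,t_v$ near $R/2-\omega(N)$ yields $\E(\widehat{T})\gtrsim N^{2-2\alpha/\zeta+o(1)}$, which is strictly below the target $N^{3-3\alpha/\zeta}$ throughout $3/2<\zeta/\alpha<2$; the configuration that attains the target is rather all three types near $R/2-\omega(N)$, where each pairwise edge appears with probability $\asymp e^{-\zeta\omega(N)}$ and the third edge is then present with conditional probability $\Theta(1)$, giving $\asymp N^{3-3\alpha/\zeta}e^{(3\alpha-2\zeta)\omega(N)}$.

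A cleaner route to the upper bound is available and avoids your problematic sub-piece entirely: for any fixed types, bound $\P(\Delta\mid t_u,t_v,t_w)$ by the probability of the two edges incident to the \emph{lowest-type} vertex, i.e.\ $\asymp 1/(A_{a,p}A_{b,p})$ with $p=\arg\min\{t_u,t_v,t_w\}$. Ordering $t_1\leq t_2\leq t_3$, the type integral then reads $\int_0^{R/2-\omega}\int_0^{t_3}\int_0^{t_2} e^{(\zeta-\alpha)t_1+(\zeta/2-\alpha)(t_2+t_3)}dt_1dt_2dt_3$, and a straightforward evaluation (using $\zeta>\alpha$ and $\zeta/2<\alpha$) shows the outermost integral has threshold exactly at $2\zeta-3\alpha=0$, i.e.\ $\zeta/\alpha=3/2$, with no spurious $4/3$ transition appearing. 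This eliminates the looseness you correctly diagnosed in the paper's $D_t^{(2)}$ estimate (which pivots at $w$ regardless of type ordering) without needing the finer $\theta_{u,v}$ correlation analysis you flag as your ``main obstacle.'' Combined with the corrected lower-bound configuration above, this would establish the conjecture up to $N^{o(1)}$ factors.
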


\subsection{Proof of Proposition \ref{prop:complete_triangles}(ii) ($\beta =1$)}\label{sect_proof=1}
In this section, we will prove (\ref{expected_T_1}) for the case $\beta = 1$.
{ The second part of Lemma~\ref{lem:Final_Expressions} implies}
$$ \E (\widehat{T}') = o \left( \E (\widehat{\Lambda} )\right), $$
which by (\ref{eq:equivalence}) implies Proposition~\ref{prop:complete_triangles} (ii).
%
We first bound the integral on the right-hand side of (\ref{Iuvw}) as follows:
\begin{equation} \label{eq:CorrectionInt}
\begin{split}
&\int_{D_z}  \frac{1}{z_1+1}\frac{1}{z_2 +1}\frac{1}{ C_{w,v} z_1+C_{w,u} z_2+1 } \, d z_1 d z_2 \\
&=
 \int_{1/(2\pi \omega(N))}^{A_{v,w} /2} \int_{1/(2\pi \omega (N))}^{A_{u,w} /2}
\frac{1}{z_1+1}\frac{1}{z_2 +1}
\frac{1}{ C_{w,v} z_1+C_{w,u} z_2+1 } \, d z_1 d z_2 \\
&\leq
\int_{1/(2\pi \omega (N))}^{A_{v,w}  /2} \int_{1/(2\pi \omega(N))}^{A_{u,w}  /2}
\frac{1}{z_1+1}\frac{1}{z_2 +1}
\frac{1}{ (C_{w,v} z_1+1)^{1/2} } \frac{1}{ (C_{w,u} z_2+1)^{1/2} } \, d z_1 d z_2 \\
&= \left[ \int_{0}^\infty  \frac{1}{z_1+1} \frac{1}{ (C_{w,v} z_1+1)^{1/2}}
~d z_1\right]
\left[\int_0^\infty \frac{1}{z_2 +1} \frac{1}{ (C_{w,u} z_2+1)^{1/2} }~d z_2 \right].
\end{split}
\end{equation}
Let us consider the first of these two integrals. We further bound it as follows:
\begin{equation} \label{eq:CorrectionIntI}
\begin{split}
& \int_{0}^\infty  \frac{1}{z_1+1} \frac{1}{ (C_{w,v} z_1+1)^{1/2}}
~d z_1 \\
&\leq
\begin{cases}
\int_{0}^\infty  \frac{1}{(z_1+1)^{3/2}}~d z_1, & \mbox{if $C_{w,v}\geq 1$} \\[1.3ex]
\int_{0}^\infty \frac{1}{z_1+1} \frac{1}{ (C_{w,v} z_1+ C_{w,v})^{1/2}}~dz_1=
{1\over C_{w,v}^{1/2}}~ \int_{0}^\infty  \frac{1}{(z_1+1)^{3/2}}~d z_1, &
\mbox{if $C_{w,v} < 1$}
\end{cases},
\end{split}
\end{equation}
The second integral in (\ref{eq:CorrectionInt}) is bounded analogously.
Now, we split $D_t$ into four sub-domains:
\begin{enumerate}
\item[1.] $t_u,t_v \leq t_w$ (that is, $C_{w,v},C_{w,u} \geq 1$);
\item[2.] $t_u \leq t_w$ but $t_v > t_w$ (that is, $C_{w,u} \geq 1$ and $C_{w,v}<1$);
\item[3.] $t_v \leq t_w$ but $t_u > t_w$ (that is, $C_{w,v} \geq 1$ and $C_{w,u}<1$);
\item[4.] $t_u, t_v > t_w$ (that is, $C_{w,v}, C_{w,u} < 1$).
\end{enumerate}
We denote by $D_i$ the domain considered in Case $i$, for $i=1,\ldots, 4$. Hence, we have
{ 
\begin{equation} \label{eq:1stsummand}
{N \choose 3}\E   \Bigl  ( T(u,v;w) \vartheta(u,v;w) \Bigr )  = {N \choose 3} \sum_{i=1}^4
\E   \Bigl  ( T(u,v;w)\vartheta(u,v;w)\mathbf{1}_{\{(t_u, t_v, t_w) \in D_i\}}
\Bigr ). 
\end{equation}
}
Setting
\[
\begin{split}
I'(u,v,w)
& := \left({\nu \over N} \right)^2 \frac{e^{2\zeta t_w}}{C_{w,v}C_{w,u}}
\max \left\{{1\over C_{w,v}^{1/2}},1 \right\}\max \left\{{1\over C_{w,u}^{1/2}},1 \right\}\times \\
& \quad \times
\left[ \int_0^\infty {1\over (z_1 +1)^{3/2}} dz_1 \right]^2,
\end{split}
\]
then (\ref{eq:CorrectionInt}) and (\ref{eq:CorrectionIntI}) imply that
$$ I(u,v,w) \leq I'(u,v,w).$$
We now consider each one of the four summands in (\ref{eq:1stsummand}) separately.

\subsubsection*{Case 1}
In this sub-domain, we have
$$ \frac{e^{2\zeta t_w}} {C_{w,v}C_{w,u}}  =
e^{\zeta t_w + {\zeta \over 2} \left( t_v + t_u \right)}. $$
We substitute this into the expression for $I'$ and we integrate over $D_1$ using
Claim~\ref{clm:density_approx}, thus obtaining
\begin{equation} \label{eq:Case_I}
\begin{split}
{N \choose 3}\E & \left( T (u,v;w) {\bf 1}_{(t_w, t_u, t_v) \in D_1} \right) \leq N^3 \int_{D_1} I'(u,v,w)
 dt_w dt_v d t_u\\
& \asymp  N\int_0^{R/2 - \omega (N)} \int_0^{t_w} \int_0^{t_w} e^{(\zeta -\alpha) t_w +
\left(\zeta/2 - \alpha \right)t_v + \left(\zeta/2 - \alpha \right)t_u} dt_u d t_v dt_w \\
&\asymp N\int_0^{R/2 - \omega (N)} e^{(\zeta -\alpha) t_w} \left[ \int_0^{t_w}
e^{\left(\zeta/2 - \alpha \right)t_u} dt_u \right]^2  dt_w \\
&\lesssim
N\int_0^{R/2 - \omega (N)} e^{(\zeta -\alpha) t_w} dt_w \lesssim
\begin{cases}
N, & \mbox{if $\zeta/\alpha < 1$} \\
RN, & \mbox{if $\zeta /\alpha = 1$} \\
N^{2-\alpha / \zeta}, & \mbox{if $\zeta /\alpha > 1$}
\end{cases}.
\end{split}
\end{equation}

\subsubsection*{Cases 2, 3}
Here, it suffices to consider only Case 2, where $t_u\leq t_w$ and $t_v > t_w$. Case 3 is treated in exactly the same way and gives the
same outcome. Here, $C_{w,u} \geq 1$ but $C_{w,v} < 1$. Hence by (\ref{eq:CorrectionIntI}) the factor that appears in $I'$ becomes
\begin{equation*}
\begin{split}
{e^{2\zeta t_w} \over C_{w,v}^{3/2} C_{w,u}}< {e^{2\zeta t_w} \over C_{w,v}^2 C_{w,u}} = e^{{\zeta \over 2}t_w + \zeta t_v + {\zeta \over 2} t_u}.
\end{split}
\end{equation*}
Thus, using again Claim~\ref{clm:density_approx} we have
\begin{equation} \label{eq:Case_II}
\begin{split}
& {N \choose 3}\E  \left( T (u,v;w) {\bf 1}_{(t_w, t_u, t_v) \in D_2} \right) \leq N^3 \int_{D_2} I'(u,v,w)
 dt_w dt_v d t_u\\
& \asymp  N \int_0^{R/2 - \omega (N)} \int_{t_w}^{R/2 - \omega (N)} \int_0^{t_w}
e^{\left({\zeta / 2}-\alpha \right)t_w + \left(\zeta - \alpha \right) t_v + \left({\zeta / 2} - \alpha \right) t_u}  d t_u dt_v dt_w \\
& \lesssim N \int_0^{R/2} \int_{0}^{R/2} \int_0^{R/2}
e^{\left({\zeta / 2}-\alpha \right)t_w + \left(\zeta - \alpha \right) t_v + \left({\zeta / 2} - \alpha \right) t_u}  d t_u dt_v dt_w \\
& = N \left[ \int_0^{R/2} e^{\left(\zeta - \alpha \right) t_v} dt_v \right] \left[ \int_{0}^{R/2}e^{\left({\zeta / 2} - \alpha \right) t_u} dt_u \right]^2
\lesssim \begin{cases}
N, & \mbox{if $\zeta/\alpha < 1$} \\
RN, & \mbox{if $\zeta /\alpha = 1$} \\
N^{2-\alpha / \zeta}, & \mbox{if $\zeta /\alpha > 1$}
\end{cases}.
\end{split}
\end{equation}
\subsubsection*{Case 4}
Now, the factor that appears in $I'$ becomes
\begin{equation*}
\begin{split}
{e^{2\zeta t_w} \over C_{w,v}^{3/2} C_{w,u}^{3/2}} = e^{{\zeta \over 2}t_w +{3\over 2}{\zeta \over 2} (t_v + t_u)}.
\end{split}
\end{equation*}
Hence,
\begin{equation} \label{eq:Case_IV}
\begin{split}
& {N \choose 3}\E  \left( T (u,v;w) {\bf 1}_{(t_w, t_u, t_v) \in D_4} \right) \leq N^3 \int_{D_4} I'(u,v,w)
 dt_w dt_v d t_u\\
& \asymp N \int_0^{R/2 - \omega (N)} \int_{t_w}^{R/2 - \omega (N)} \int_{t_w}^{R/2 - \omega (N)}
e^{\left({3\over 2}{\zeta \over 2} -\alpha \right)( t_u + t_v ) +
\left(\zeta /2- \alpha \right) t_w} dt_u dt_v dt_w \\
& \leq N \left[\int_0^{R/2 - \omega (N)} e^{(\zeta /2-\alpha)  t_w} dt_w \right]
\left[\int_{0}^{R/2} e^{\left(3\zeta/4 -\alpha \right)t_u}  dt_u \right]^2 \\
&
\lesssim \begin{cases}
N, & \mbox{if $\zeta/\alpha < 4/3$} \\
R^2N, & \mbox{if $\zeta /\alpha = 4/3$} \\
N^{1+3/2 - 2\alpha /\zeta }, & \mbox{if $\zeta /\alpha > 4/3$}
\end{cases}.
\end{split}
\end{equation}
The last exponent is equal to $2.5 - 2\alpha /\zeta < 2 - \alpha/\zeta$, as $1/2 < \alpha /\zeta$.

{ Therefore, by plugging all these estimates into \eqref{eq:1stsummand}, we finally obtain
\[
{N \choose 3}\E   \Bigl  ( T(u,v;w) \vartheta(u,v;w) \Bigr )  = o \left( \E (\widehat{\Lambda} )\right).
\]
}

\subsection{Proof of Proposition~\ref{prop:complete_triangles}(ii) ($\beta <1$)}\label{sect_proof<1}

To prove Proposition~\ref{prop:complete_triangles}(ii) for $\beta < 1$, it also suffices to show (\ref{eq:1stTerm_ToShow}).
Recall (\ref{prop:equ_prob_b<1})
\[
\begin{split}
\E &( T (u,v;w)\vartheta(u,v;w)\mid t_u,t_v,t_w)\lesssim \\
& \frac{1}{A_{u,w}A_{v,w}} \int_{\frac{\omega(N)}{2\pi}}^{A_{v,w} /2} \int_{\frac{\omega(N)}{2\pi}}^{A_{u,w} /2}  
\frac{1}{z_1^\beta+1} \frac{1}{z_2^\beta+1} \frac{1}{(C_{w,v}z_1+C_{w,u}z_2)^\beta +1}dz_1 dz_2.
\end{split}
\]
The condition $z_2\geq \omega(N)/2\pi $ implies that
\[
\frac{1}{z_2^\beta+1}\lesssim \left ( \frac{1}{\omega(N)}\right )^\beta .
\]
Setting \ $ y=C_{w,v}z_1+C_{w,u}z_2$, we have
\[
dz_1 dz_2= \frac{dz_1dy}{C_{w,u}}
\]
with
\[
y\in \left ( C_{w,v}z_1+C_{w,u}\frac{\omega(N)}{2\pi} , \, C_{w,v}z_1+C_{w,u}A_{v,w} /2\right ).
\]
So we obtain:
\[
\begin{split}
\E & ( T (u,v;w)\vartheta(u,v;w)\mid t_u,t_v,t_w)\\
& \lesssim \left( \frac{1}{\omega(N)}\right)^\beta \frac{1}{A_{u,w}A_{v,w}C_{w,u}}\int_{\frac{\omega(N)}{2\pi}}^{A_{u,w} /2}
\frac{1}{z_1^\beta}
\int_{C_{w,v}z_1+C_{w,u}\frac{\omega(N)}{2\pi}}^{C_{w,v}z_1+C_{w,u}A_{v,w}/2} \frac{1}{y^\beta} dy dz_1 .
\end{split}
\]
We extend the inner { integration interval} using an upper bound on $z_1$. In particular, since $z_1 \leq A_{u,w} /2$, we have
$C_{w,v}z_1 + C_{w,u}A_{v,w}  /2\leq  (C_{w,v}A_{u,w} + C_{w,u}A_{v,w})/2$. Note that
\begin{equation} \label{eq:CAProd}
\begin{split}
 C_{w,v} A_{u,w} &= \exp\left({\zeta \over 2} \left(t_w - t_v + R - t_u - t_w \right)\right) = A_{u,v}, \\
 C_{w,u} A_{v,w} &= A_{u,v}.
\end{split}
\end{equation}
Hence,
$$ C_{w,v}z_1 + C_{w,u}A_{v,w}/2  \leq A_{u,v}.$$ Also, since $z_1 \geq {\omega (N) \over 2\pi}$, we have
$C_{w,v}z_1 + C_{w,u}{\omega (N) \over 2\pi} \geq (C_{w,v}+C_{w,u})\frac{\omega(N)}{2\pi}$.
{ Then we will obtain an upper bound by extending the integration interval to}
\[
y\in \left ( (C_{w,v}+C_{w,u})\frac{\omega(N)}{2\pi} , \, A_{u,v}\right ).
\]
Hence, we obtain
\begin{equation*}
\begin{split}
\E & ( T (u,v;w)\vartheta(u,v;w) \mid t_u,t_v,t_w )\\
& \lesssim \left ( \frac{1}{\omega(N)}\right )^\beta \frac{1}{A_{u,w}A_{v,w}C_{w,u}}\int_{\frac{\omega(N)}{2\pi}}^{A_{u,w}/2}
\frac{1}{z_1^\beta} dz_1 \int_{(C_{w,v}+C_{w,u})\frac{\omega(N)}{2\pi}}^{A_{u,v}} \frac{1}{y^\beta} dy  \\
& \stackrel{(\ref{eq:CAProd})}{\lesssim} \left ( \frac{1}{\omega(N)}\right )^\beta \frac{1}{A_{u,w}A_{u,v}}
 A_{u,w}^{1-\beta} A_{u,v}^{1-\beta} \asymp \left ( \frac{1}{\omega(N)}\right )^\beta \frac{1}{(A_{u,v}A_{u,w})^\beta}.
\end{split}
\end{equation*}
Now we integrate over $(t_u, t_v, t_w) \in D_t=[0,R/2-\omega(N)]^3$ using Claim~\ref{clm:density_approx}, obtaining
\[
\begin{split}
\E & ( T (u,v;w)\vartheta(u,v;w) ) \\
& \lesssim \int_0^{R/2-\omega(N)} \int_0^{R/2-\omega(N)} \int_0^{R/2-\omega(N)} \left ( \frac{1}{\omega(N)}\right )^\beta \frac{e^{-\alpha (t_u+t_v+t_w)}}{(A_{u,v}A_{u,w})^\beta} dt_u dt_v dt_w.
\end{split}
\]
Elementary integration now yields:
\[
\E ( T (u,v;w)\vartheta(u,v;w)) \lesssim
\left \{ \begin{array}{ll}
(\omega(N))^{-\beta} N^{-2\beta} & \textnormal{if }\beta\zeta/\alpha<1\\
(\omega(N))^{-\beta} R N^{-2\beta} & \textnormal{if }\beta\zeta/\alpha=1\\
(\omega(N))^{-\beta} N^{-\beta-\alpha/\zeta}e^{-(\beta\zeta-\alpha)\omega(N)} & \textnormal{if }\beta\zeta/\alpha>1
\end{array} \right ..
\]
Multiplying the above by ${N \choose 3}$ and comparing the outcome with (\ref{expected_D<1})
we now deduce~Proposition~\ref{prop:complete_triangles}(ii) for $\beta < 1$ from (\ref{eq:equivalence}).

\section{Proof of Proposition~\ref{prop_asymptotic_atypical_triangles}}\label{sect:proof_prop_26}

In this section we bound from above the expected number of atypical triangles, that is, those triangles which contain at least one vertex of
type greater than $R/2 - \omega (N)$. We will only do this for the case $\zeta /\alpha > 1$, as for the case $\zeta /\alpha < 1$ the
 argument is straightforward. 

\subsection*{Case $\zeta/\alpha<1 $}

Note that if $ \widetilde \Lambda\geq 1$, then there is a vertex of type at least $R/2-\omega(N)$.
But by Corollary \ref{cor:x0}, a.a.s.\ all vertices have type at most
\[
\frac{\zeta}{2\alpha}R+\omega(N)<\frac{R}{2}-\omega(N),
\]
for $N$ sufficiently large.
Hence, in this case, $3\widetilde T\leq \widetilde \Lambda=0 $, a.a.s.

\subsection*{Case $\zeta /\alpha \geq 1$}

As a preliminary observation, we point out that the probability that three vertices form an atypical triangle is bounded from above by the probability that they form an incomplete triangle.

For every triple $u,v,w$ of distinct vertices, we let ${\bf{1}}_{\Delta_a(u,v,w)}$ denote the indicator random variable that is equal to 1 if and only if
the vertices $u,v$ and $w$ form a triangle and at least one of these vertices is atypical. Similarly, we let
${\bf{1}}_{\Lambda_a(u,v;w)}$ be the indicator random variable that is equal to 1 if and only if  the vertices $u,v$ and $w$ form an incomplete triangle
with $w$ as the pivoting vertex and at least one of these vertices is atypical.

We split $\E({\bf 1}_{\Delta_a (u,v,w)})$ as follows:
\begin{equation}\label{eq:bound_triangles}
\begin{split}
\E ({\bf{1}}_{\Delta_a(u,v,w)}) & = 3\E ({\bf{1}}_{\Delta_a(u,v,w)}{\bf{1}}_{t_u,t_v\leq R/2-\omega(N);~ t_w>R/2-\omega(N)}) \\
& + \E ({\bf{1}}_{\Delta_a(u,v,w)}{\bf{1}}_{t_u,t_v,t_w> R/2-\omega(N)}) \\
& + 3\E ({\bf{1}}_{\Delta_a(u,v,w)}{\bf{1}}_{t_u \leq R/2-\omega(N);~ t_v,t_w>R/2-\omega(N)}).
\end{split}
\end{equation}
To deduce Proposition~\ref{prop_asymptotic_atypical_triangles}, it suffices to show that
\begin{equation} \label{eq:atyp_triangles_toprove}
\begin{split}
\left.
\begin{array} {l}
N^3 \E ({\bf{1}}_{\Delta_a(u,v,w)}{\bf{1}}_{t_u,t_v\leq R/2-\omega(N);~ t_w>R/2-\omega(N)}) \\
N^3  \E ({\bf{1}}_{\Delta_a(u,v,w)}{\bf{1}}_{t_u,t_v,t_w> R/2-\omega(N)}) \\
N^3 \E ({\bf{1}}_{\Delta_a(u,v,w)}{\bf{1}}_{t_u, t_v \leq R/2-\omega(N);~ t_w>R/2-\omega(N)})
\end{array} \right\}
= o\left(\E(\hat{\Lambda}) \right).
\end{split}
\end{equation}
Hence, the second part of Proposition~\ref{prop_asymptotic_atypical_triangles} will follow from Markov's inequality. 

We will estimate each one of these terms separately. 
We will be using the following general inequality:
\begin{equation}\label{eq_delta_a_lambda_a}
\E ({\bf{1}}_{\Delta_a(u,v,w)}{\bf{1}}_E)\leq \E ({\bf{1}}_{\Lambda_a(u,v;w)}{\bf{1}}_E),
\end{equation}
where $E$ denotes any event.
The event $E$ will be specified according to the case we consider.

Now recall the definitions of $\beta'$ and $\delta$:
\begin{equation*}
\beta':=
\left \{ \begin{array}{ll}
\beta, & \textnormal{if } \beta < 1\\
1, & \textnormal{if } \beta \geq 1
\end{array} \right .
\qquad
\textnormal{and}
\qquad
\delta :=
\left \{
\begin{array}{ll}
0, & \textnormal{if } \beta \neq 1\\
1, & \textnormal{if } \beta = 1
\end{array}
\right ..
\end{equation*}
For sake of clarity, we will defer many of the technical calculations to Appendix \ref{sect_aux_calculations}.

\subsection{Case $t_u,t_v,t_w>R/2-\omega(N)$}\label{sect_3_bad}
In this case, we bound $\E ({\bf{1}}_{\Delta_a (u,v;w)})$ by the probability that $u,v,w$ have type greater than $R/2 - \omega (N)$:
\[
\begin{split}
\E  &({\bf{1}}_{\Delta_a(u,v;w)} {\bf 1}_{t_u,t_v,t_w>R/2-\omega(N)}) \lesssim 
\\
& \int_{R/2-\omega(N)}^R \int_{R/2-\omega(N)}^R \int_{R/2-\omega(N)}^R e^{-\alpha(t_u+t_v+t_w)}dt_u dt_v dt_w.
\end{split}
\]
Now it is easy to check that
\[
t_u+t_v+t_w>\frac{3}{2}R - 3\omega(N).
\]
From these observations we can deduce the following:
\[
\begin{split}
\E & ({\bf{1}}_{\Delta_a(u,v,w)} {\bf 1}_{t_u,t_v,t_w>R/2-\omega(N)})
 \lesssim 
\\
& \int_{R/2-\omega(N)}^R \int_{R/2-\omega(N)}^R \int_{R/2-\omega(N)}^R
e^{-\alpha(\frac{3}{2} R - 3\omega(N))}dt_u dt_v dt_w \\[1.1ex]
& \asymp R^{3}  e^{-{3\alpha \over \zeta} {\zeta R\over 2}} e^{3\alpha  \omega(N)} \asymp
R^{3}  N^{-{3\alpha \over \zeta}} e^{3\alpha  \omega(N)}.
\end{split}
\]

It suffices to show that if $\omega (N)$ is sufficiently slowly growing, then we have
\[
R^{3}  N^{3-{3\alpha \over \zeta}} e^{3\alpha  \omega(N)}\ll \E (\widehat{\Lambda}).
\]
Indeed, the above follows from the following conditions which are easy to verify (cf. Proposition~\ref{prop:incomplete_triangles}).
\begin{itemize}
\item[(i)] for $\beta\geq 1$ (i.e., $\beta'=1$) we have
\[
\left \{
\begin{array}{ll}
3-3\alpha/\zeta < 1, & \textnormal{ if } \zeta/\alpha=1\\
3-3\alpha/\zeta <2-\alpha/\zeta , & \textnormal{ if } \zeta/\alpha >1
\end{array}
\right .,
\]
as the latter is equivalent to $2\alpha/\zeta > 1$.
\item[(ii)] for $\beta <1$ (which, by the definition of the model, implies $\beta\zeta/\alpha<2$) we have
\[
\left \{
\begin{array}{ll}
3-3\alpha/\zeta < 3-2\beta, & \textnormal{ if } \beta\zeta/\alpha\leq 1\\
3-3\alpha/\zeta < 3-\beta-\alpha/\zeta , & \textnormal{ if } \beta\zeta/\alpha>1
\end{array}
\right .,
\]
as the former is equivalent to $\beta \zeta /\alpha < 3/2$ and the latter is equivalent to $\alpha /\beta \zeta > 1/2$.
\end{itemize}

\subsection{Case $t_u\leq R/2-\omega(N) $ and $t_v,t_w>R/2-\omega(N)$}\label{sect_2_bad}
Here we will make use of (\ref{eq_delta_a_lambda_a}) and bound the probability that three vertices form a triangle by the probability that they form an incomplete triangle. 
{ In particular, we shall be considering the case when the incomplete triangle is pivoted at $w$.}
Under this assumption we consider the following two sub-cases:
\begin{itemize}
\item[1.] $ t_u+t_w\leq R-\omega(N)$ with $ t_v+t_w> R-2\omega(N)$;
\item[2.] $ t_u+t_w> R-\omega(N)$ with $ t_v+t_w> R-2\omega(N)$.
\end{itemize}
Assume without loss of generality that $t_v>t_w $.  
The domain of integration of sub-case 1 becomes
\[
D_1:=\{ 0<t_u\leq R-t_w-\omega(N), \ t_w<t_v<R, \ R/2-\omega(N)<t_w<R\}.
\]
Note first that for any $u,v \in \V$ we have $\ln (A_{u,v})\lesssim \ln (N)\asymp R$.
Thus, applying Lemma~\ref{lemma_2.4_evolution} we obtain
\[
\begin{split}
& \E ({\bf{1}}_{\Lambda_a(u,v;w)} {\bf 1}_{t_u\leq R/2-\omega(N);\, t_v,t_w>R/2-\omega(N)}{\bf 1}_{(t_u,t_v,t_w) \in D_1}) \\
& \lesssim\int_{R/2-\omega(N)}^R \int_{t_w}^R \int_0^{R-t_w-\omega(N)}\!\left ( \frac{ R^{\delta}}{A_{u,w}}\right )^{\beta'\!\!}e^{-\alpha(t_u+t_v+t_w)}dt_u dt_v dt_w
 =: \phi_1 .
\end{split}
\]
Multiplying the above by $N^3$, the estimates in (\ref{phi_4}) imply (\ref{eq:atyp_triangles_toprove}).
In fact, it is easy to show that the following conditions hold.
\begin{itemize}
\item[(i)] For $\beta\geq 1$ we have
\[
\left \{
\begin{array}{ll}
5/2-2\alpha/\zeta<1, & \textnormal{ if }\zeta/\alpha=1\\
5/2-2\alpha/\zeta<2-\alpha/\zeta , & \textnormal{ if }\zeta/\alpha > 1
\end{array}
\right .,
\]
where the latter is equivalent to $\alpha /\zeta > 1/2$ (that is, $\zeta/\alpha < 2$). 
\item[(ii)] For $\beta<1$ we have
\[
\left \{
\begin{array}{ll}
3-\beta/2-2\alpha/\zeta< 3-2\beta, & \textnormal{ if }\beta\zeta/\alpha\leq 1\\
3-\beta/2-2\alpha/\zeta< 3-\beta-\alpha/\zeta , & \textnormal{ if }\beta\zeta/\alpha>1
\end{array}
\right .,
\]
where the former is equivalent to $\beta \zeta /\alpha < 4/3$ and the latter is equivalent to $\alpha / \beta \zeta > 1/2$ (that is, 
$\beta \zeta /\alpha < 2$). 
\end{itemize}

In sub-case 2, the domain of integration is
\[
D_2:=\{ R-t_w-\omega(N)\!<\!t_u\!<\! R/2-\omega(N),  t_w\!<\!t_v\!<\!R,  R/2-\omega(N)\!<\!t_w\!<\!R\}.
\]
Hence we get
\[
\begin{split}
\E & ({\bf{1}}_{\Lambda_a(u,v;w)} {\bf 1}_{t_u\leq R/2-\omega(N);\, t_v,t_w>R/2-\omega(N)}{\bf 1}_{(t_u,t_v,t_w) \in D_2}) \\
& \lesssim \int_{R/2-\omega(N)}^R \int_{t_w}^R \int_{R-t_w-\omega(N)}^{R/2} e^{-\alpha(t_u+t_v+t_w)}dt_u dt_v dt_w
=: \phi_2 .
\end{split}
\]
The above function is estimated in (\ref{phi_5}). Multiplying that by $N^3$ we obtain (\ref{eq:atyp_triangles_toprove}).
Indeed, the following inequalities are easy to verify.
\begin{itemize}
\item[(i)] for $\beta\geq 1$ we have
\[
\left \{
\begin{array}{ll}
3-3\alpha/\zeta<1, & \textnormal{ if } \zeta/\alpha=1\\
3-3\alpha/\zeta<2-\alpha/\zeta, & \textnormal{ if } \zeta/\alpha>1
\end{array}
\right . ,
\]
where the last inequality holds since $\alpha / \zeta > 1/2$ (that is, $\zeta /\alpha < 2$);
\item[(ii)] for $\beta< 1$ we have
\[
\left \{
\begin{array}{ll}
3-3\alpha/\zeta<3-2\beta, & \textnormal{ if } \beta\zeta/\alpha\leq 1\\
3-3\alpha/\zeta<3-\beta-\alpha/\zeta, & \textnormal{ if } \beta\zeta/\alpha>1
\end{array}
\right . ,
\]
where the former holds because $\beta \zeta /\alpha \leq 1 < 3/2$ and the latter because 
$\alpha / \zeta > 1/2$ and $\beta < 1$. 
\end{itemize}

\subsection{Case $t_u,t_v\leq R/2-\omega(N) $ and $t_w>R/2-\omega(N)$}\label{sect_1_bad}

Under our current assumptions, we have four possible sub-cases:
\begin{enumerate}
\item $ t_u+t_w\leq R-2\omega(N)$ with $ t_v+t_w\leq R-2\omega(N)$;
\item $ t_u+t_w\leq R-2\omega(N)$ with $ t_v+t_w> R-2\omega(N)$;
\item $ t_u+t_w> R-2\omega(N)$ with $ t_v+t_w\leq R-2\omega(N)$;
\item $ t_u+t_w> R-2\omega(N)$ with $ t_v+t_w> R-2\omega(N)$.
\end{enumerate}
We denote the $i$th domain by $D_i$.
We need to treat each situation separately, starting with sub-case 1. 
We shall use Lemma~\ref{lemma_2.4_evolution}:
\[
\begin{split}
\E & ({\bf{1}}_{\Delta_a(u,v,w)} {\bf 1}_{t_u,t_v\leq R/2-\omega(N);\, t_w>R/2-\omega(N)}{\bf 1}_{(t_u,t_v,t_w) \in D_1}) \leq \\[1.3ex]
\E & ({\bf{1}}_{\Lambda_a (v,w;u)} {\bf 1}_{t_u,t_v\leq R/2-\omega(N);\, t_w>R/2-\omega(N)}{\bf 1}_{(t_u,t_v,t_w) \in D_1})
\\[1.3ex]
& \lesssim \int_{R/2-\omega(N)}^{R-2\omega (N)} \int_0^{R-t_w-2\omega(N)}
\int_0^{R-t_w-2\omega(N)} \left ( \frac{R^{2\delta}}{A_{u,v} A_{u,w}}\right )^{\beta'}\times \\[1.3ex]
& \quad  e^{-\alpha(t_u+t_v+t_w)} dt_u dt_v dt_w
=: \phi_3 .
\end{split}
\]
The asymptotic growth of $\phi_3$  is determined by the ratio $\beta'\zeta/\alpha $, as in (\ref{phi6}) in Appendix \ref{sect_aux_calculations}.
To deduce~(\ref{eq:atyp_triangles_toprove}) on this sub-domain, we multiply (\ref{phi6}) by $N^3$ and compare the resulting exponents of $N$ with those in Proposition~\ref{prop:incomplete_triangles}. For each case we have:
\begin{itemize}
\item[(i)] for $\beta\geq 1$ (that is, $\beta'=1$) we have
\[
\left \{
\begin{array}{ll}
3 - {3\over 2}\beta' - \alpha /\zeta < 1, & \textnormal{ if } \zeta/\alpha=1\\
3-\beta'/2-2\alpha/\zeta < 2-\alpha/\zeta , & \textnormal{ if } \zeta/\alpha>1
\end{array}
\right .,
\]
where the latter holds since $\alpha / \zeta > 1/2$ (that is, $\zeta /\alpha < 2$). 
\item[(ii)] for $\beta <1$ (where $\beta' =\beta$) we have
\[
\left \{
\begin{array}{ll}
3-{3\over 2}\beta- \alpha/\zeta < 3-2\beta, & \textnormal{ if } \beta\zeta/\alpha < 1\\
3-{5\over 2}\beta  < 3-2\beta, & \textnormal{ if } \beta\zeta/\alpha = 1\\
3-{\beta \over 2} - 2\alpha/\zeta < 3-\beta-\alpha/\zeta , & \textnormal{ if } \beta\zeta/\alpha>1
\end{array}
\right .,
\]
since $\alpha / \zeta > 1/2$ and $\beta < 1$.
\end{itemize}
%
Regarding Case 2 (as well as Case 3) we have the following:
\[
\begin{split}
\E & ({\bf{1}}_{\Delta_a(u,v,w)} {\bf 1}_{t_u,t_v\leq R/2-\omega(N);\, t_w>R/2-\omega(N)}{\bf 1}_{(t_u,t_v,t_w) \in D_2}) 
\leq \\[1.3ex]
 \E & ({\bf{1}}_{\Lambda_a (v,w;u)} {\bf 1}_{t_u,t_v\leq R/2-\omega(N);\, t_w>R/2-\omega(N)}{\bf 1}_{(t_u,t_v,t_w) \in D_2}) 
\lesssim \\[1.3ex]
& \int_{R/2-\omega(N)}^R
\int_{0}^{R-t_w-2\omega(N)} \int_{R - t_w -2\omega(N)}^{R/2-\omega(N)} \left ( \frac{R^{2\delta}}{A_{u,v}A_{u,w}}\right )^{\beta'}\times \\[1.3ex]
& \quad \times e^{-\alpha(t_u+t_v+t_w)} dt_v dt_u dt_w  =: \phi_4 .
\end{split}
\]
As in the previous case, this expression depends on the ratio $\beta'\zeta/\alpha $. The
statement follows multiplying (\ref{eq:phi7}) by $N^3$ and comparing the exponents of
$N$ with those in Proposition~\ref{prop:incomplete_triangles}. For each case we have:
\begin{itemize}
\item[(i)] for $\beta\geq 1$ we get
\[
\left \{
\begin{array}{ll}
2-2\alpha/\zeta <1, & \textnormal{ if }\zeta/\alpha=1\\
3-3\alpha/\zeta<2-\alpha/\zeta, & \textnormal{ if }1< \zeta/\alpha < 2
\end{array}
\right ..
\]
\item[(ii)] for $\beta<1$ we get
\[
\left \{
\begin{array}{ll}
3-\beta-2\alpha/\zeta<3-2\beta, & \textnormal{ if }\beta\zeta/\alpha\leq 1\\
3-3\alpha/\zeta<3-\beta-\alpha/\zeta, & \textnormal{ if }\beta\zeta/\alpha>1
\end{array}
\right ..
\]
\end{itemize}

Case 4 is treated in a similar way:
\[
\begin{split}
{ \E} & ({\bf{1}}_{\Delta_a(u,v,w)} {\bf 1}_{t_u,t_v\leq R/2-\omega(N);\, t_w>R/2-\omega(N)}{\bf 1}_{(t_u,t_v,t_w) \in D_4})  \\[1.3ex]
& \lesssim \int_{R/2-\omega(N)}^{R}\int_{R-t_w - 2\omega (N)}^{R/2-\omega(N)}\int_{R-t_w - 2\omega (N)}^{R/2-\omega(N)} \left ( \frac{ R^{\delta}}{A_{u,v}}\right )^{\beta'}\times \\[1.3ex]
& \quad \times e^{-\alpha(t_u+t_v+t_w)} dt_u dt_v dt_w=: \phi_5 .
\end{split}
\]
We estimate this in (\ref{var_phi3}) in Appendix A.
As above, the statement follows from the multiplication of (\ref{var_phi3}) by $N^3$.
For each case we have:
\begin{itemize}
\item[(i)] for $\beta\geq 1$ we get
\[
\left \{
\begin{array}{ll}
2-2\alpha/\zeta <1, & \textnormal{ if }\zeta/\alpha=1\\
3-3\alpha/\zeta<2-\alpha/\zeta, & \textnormal{ if }1< \zeta/\alpha < 2
\end{array}
\right ..
\]
\item[(ii)] for $\beta<1$ we get
\[
\left \{
\begin{array}{ll}
3-\beta-2\alpha/\zeta<3-2\beta, & \textnormal{ if }\beta\zeta/\alpha\leq 1\\
3-3\alpha/\zeta<3-\beta-\alpha/\zeta, & \textnormal{ if }\beta\zeta/\alpha>1
\end{array}
\right ..
\]
\end{itemize}

\section{Conclusions}
In this paper we give a precise characterization of the presence of clustering in random geometric graphs on the hyperbolic plane in terms
of its parameters. We focus on the range of parameters where these random graphs have a linear number of edges and their degree
distribution follows a power law. We quantify the existence of clustering, furthermore, in the part of the random graph that consists of 
vertices that have  { type at most $t$, where $0<t<R$},
we show that the clustering coefficient there is bounded away from 0. More importantly, we determine exactly how this quantity
depends on the parameters of the random graph.

The present work is a step towards establishing such random graphs as a suitable model for complex networks. Together 
with~\cite{ar:Foun13+} and~\cite{ar:Kosta}, our results show that for certain values of the parameters, such random graphs do capture
two of the fundamental properties of complex networks, namely: power-law degree distribution as well as clustering.

A natural next step in this direction is the study of the typical distances (in terms of \emph{hops}) between vertices. More precisely, one is
interested in investigating the distance between two typical vertices, and how the values of the parameters influence this quantity.
In other words, for which values of $\beta$ and $\zeta/\alpha$ is the resulting random graph what is commonly called a \emph{small world}?

\appendix

\section{Auxiliary Calculations}\label{sect_aux_calculations}
In this section we show the technical calculations needed to finish the proofs in Section \ref{sect:proof_prop_26}.
Recall that
\[
\beta'=
\left \{
\begin{array}{ll}
\beta & \textnormal{if } \beta < 1\\
1 & \textnormal{if } \beta \geq 1
\end{array}
\right . 
\qquad
\textnormal{and}
\qquad
\delta =
\left \{
\begin{array}{ll}
0 & \textnormal{if } \beta \neq 1\\
1 & \textnormal{if } \beta = 1
\end{array}
\right ..
\]
In the proof of Proposition~\ref{prop_asymptotic_atypical_triangles}, we defined the functions $\phi_1, \dots, \phi_5$, 
which we calculate explicitly in this section.

We start with
\[
\phi_1 \leq R^\delta \int_{R/2-\omega(N)}^R \int_{t_w}^R \int_0^{R-t_w-\omega(N)}\left ( \frac{1}{A_{u,w}}\right )^{\beta'}e^{-\alpha(t_u+t_v+t_w)}dt_u dt_v dt_w.
\]
Then we have
\[
\begin{split}
\phi_1 
& \lesssim \frac{R^\delta}{N^{\beta'}} 
\int_{R/2-\omega(N)}^R \int_{t_w}^R \int_0^{R-t_w-\omega(N)} 
e^{\left(\beta' \zeta / 2 - \alpha \right) t_u + (\beta' \zeta /2)t_w  - \alpha (t_v + t_w) } dt_u dt_v dt_w
\\
& \lesssim \frac{R^\delta}{N^{\beta'}} \int_{R/2-\omega(N)}^R \int_{t_w}^R e^{(\beta'\zeta/2)t_w}e^{-\alpha(t_v+t_w)} dt_v dt_w \\
& \lesssim \frac{R^\delta}{N^{\beta'}} \int_{R/2-\omega(N)}^R e^{(\beta'\zeta/2-2\alpha)t_w} dt_w
\lesssim \frac{R^\delta}{N^{\beta'}} e^{(\beta'\zeta/2-2\alpha)(R/2-\omega(N))}.
\end{split}
\]
Hence we have
\begin{equation}\label{phi_4}
\phi_1 \lesssim R^\delta N^{-\beta'/2-2\alpha/\zeta} e^{-(\beta'\zeta/2-2\alpha)\omega(N)} .
\end{equation}
We now consider
\[
\phi_2 \leq \int_{R/2-\omega(N)}^R \int_{t_w}^R \int_{R-t_w-\omega(N)}^{R/2} e^{-\alpha(t_u+t_v+t_w)}dt_u dt_v dt_w.
\]
A calculation similar to the previous case yields
\[
\begin{split}
\phi_2
& \lesssim \int_{R/2-\omega(N)}^R \int_{t_w}^R e^{-\alpha(R-t_w-\omega(N))} e^{-\alpha(t_v+t_w)} dt_v dt_w \\
& = e^{-\alpha(R-\omega(N))} \int_{R/2-\omega(N)}^R \int_{t_w}^R e^{-\alpha t_v} dt_v dt_w \\
& \lesssim N^{-2\alpha/\zeta} e^{\alpha \omega(N)} \int_{R/2-\omega(N)}^R e^{-\alpha t_w} dt_w
\lesssim N^{-2\alpha/\zeta} e^{\alpha \omega(N)} e^{-\alpha (R/2-\omega(N))}.
\end{split}
\]
And finally we obtain
\begin{equation}\label{phi_5}
\phi_2
\lesssim N^{-3\alpha/\zeta} e^{2\alpha \omega(N)}.
\end{equation}

\noindent
Now we consider
\[
\begin{split}
& \phi_3 =
\int_{R/2-\omega(N)}^{R-2\omega (N)} \int_0^{R-t_w-2\omega(N)}
\int_0^{R-t_w-2\omega(N)} \left ( \frac{R^{2\delta}}{A_{u,v}A_{u,w}}\right )^{\beta'}\times \\[1.2ex]
& \qquad e^{-\alpha(t_u+t_v+t_w)} dt_u dt_v dt_w \\[1.2ex]
&\leq {R^{2\delta}\over N^{2\beta'}}
\int_{R/2-\omega(N)}^{R-2\omega (N)} \! \int_0^{R-t_w-2\omega(N)}
\int_0^{R-t_w-2\omega(N)}
e^{\left({\beta'\zeta \over 2} - \alpha \right)(t_w + t_v)+ \left( \beta'\zeta -\alpha \right) t_u} \\[1.2ex]
& \qquad \times
dt_u dt_v dt_w \\[1.2ex]
&\lesssim
{R^{2\delta}\over N^{2\beta'}}
\int_{R/2-\omega(N)}^{R - 2\omega (N)}
e^{\left({\beta'\zeta \over 2} -\alpha \right)t_w}
\left[ \int_0^{R-t_w-2\omega(N)}
e^{(\beta'\zeta -\alpha)t_u} dt_u \right]
dt_w .
\end{split}
\]
Now the order of magnitude of this integral depends on the ratio $\beta'\zeta/\alpha$.
\smallbreak

\noindent
For $\beta'\zeta/\alpha\leq 1$, we have
\[
\begin{split}
\phi_3 
& \lesssim
{R^{2\delta + 1}\over N^{2\beta'}}
\int_{R/2-\omega(N)}^{R - 2\omega (N)}
e^{\left({\beta'\zeta \over 2} -\alpha \right)t_w} dt_w  \lesssim
{R^{2\delta + 1}\over N^{2\beta'}} e^{\left({\beta'\zeta \over 2} - \alpha \right)R/2}
e^{-(\beta'\zeta/2 - \alpha)\omega (N)} \\
&\asymp {R^{2\delta+1}N^{\beta'/2 - {\alpha /\zeta}}\over N^{2\beta'}}
e^{-(\beta'\zeta/2 - \alpha)\omega (N)}  =
R^{2\delta + 1}N^{-{3\over 2}\beta' - {\alpha /\zeta}} e^{-(\beta'\zeta/2 - \alpha)\omega (N)}.
\end{split}
\]
\smallbreak

\noindent
Finally, when $\beta'\zeta/\alpha>1$, we have
\[
\begin{split}
& \phi_3  \lesssim
{R^{2\delta} e^{(\beta'\zeta - \alpha)R}\over N^{2\beta'}}
e^{-2(\beta'\zeta - \alpha)\omega (N) }\int_{R/2-\omega(N)}^{R - 2\omega(N)}\!\!
e^{\left({\beta'\zeta \over 2} -\alpha\right)t_w- (\beta'\zeta -\alpha)t_w} dt_w \\
& \asymp
{R^{2\delta} N^{2(\beta' - \alpha /\zeta)}\over N^{2\beta'}}
e^{-2(\beta'\zeta - \alpha)\omega (N) }
\int_{R/2-\omega(N)}^{R-2\omega(N)}
e^{- \beta'\zeta t_w/2} dt_w \\
&\asymp
 {R^{2\delta} N^{2(\beta' - \alpha /\zeta)} e^{-{\beta'\zeta \over 2}~R/2}\over N^{2\beta'}}
 e^{-(3\beta'\zeta/2 - 2\alpha)\omega (N)} \\
 & \asymp
 {R^{2\delta} N^{2(\beta' - \alpha /\zeta) -{\beta' \over 2}}\over N^{2\beta'}}
 e^{-(3\beta'\zeta/2 - 2\alpha)\omega (N)} = R^{2\delta} N^{-{\beta' \over 2} - 2\alpha /\zeta} 
e^{-(3\beta'\zeta/2 - 2\alpha)\omega (N)}.
\end{split}
\]
Therefore,
\begin{equation} \label{phi6}
\phi_3 \lesssim
\begin{cases}
R^{2\delta+1}N^{-{3\over 2}\beta' - {\alpha /\zeta}} e^{-(\beta'\zeta/2 - \alpha)\omega (N)}, &
\mbox{if $\beta'\zeta / \alpha \leq 1$} \\
R^{2\delta} N^{-{\beta' \over 2} - 2\alpha /\zeta} e^{-(3\beta'\zeta/2 - 2\alpha)\omega (N)}, &
\mbox{if $\beta' \zeta /\alpha > 1$} \\
\end{cases}.
\end{equation}
Now we consider the function
\[
\begin{split}
& \phi_4 \leq
R^{2\delta}\int_{R/2-\omega(N)}^R
\int_{0}^{R-t_w-2\omega(N)} \int_{R - t_w -2\omega(N)}^{R/2-\omega(N)} \left ( \frac{1}{A_{u,v}A_{u,w}}\right )^{\beta'}\times \\[1.2ex]
& \qquad  e^{-\alpha(t_u+t_v+t_w)} dt_v dt_u dt_w \\[1.2ex]
&\asymp
{R^{2\delta} \over N^{2\beta'}}
\int_{R/2-\omega(N)}^R
\int_{0}^{R-t_w-2\omega(N)} \int_{R - t_w -2\omega(N)}^{R/2-\omega(N)}
e^{(\beta'\zeta -\alpha)t_u} \times \\[1.3ex]
& \qquad e^{\left({\beta' \zeta \over 2}-\alpha \right)(t_v +t_w)} dt_v dt_u dt_w.
\end{split}
\]
Therefore, integrating with respect to $t_v$ recalling that $\beta' \zeta/2 < \alpha$ we obtain:
\[
\begin{split}
& \phi_4 \lesssim 
{R^{2\delta} e^{\left({\beta' \zeta \over 2}-\alpha \right)R}\over N^{2\beta'}}
e^{-\left(\beta' \zeta-2\alpha \right)\omega (N)}\times \\[1.2ex]
& \qquad  \int_{R/2-\omega(N)}^R
\int_{0}^{R-t_w-2\omega(N)}
e^{(\beta'\zeta -\alpha)t_u-
\left({\beta' \zeta \over 2}-\alpha \right)t_w +
\left({\beta' \zeta \over 2}-\alpha \right)t_w} dt_u dt_w \\
&\asymp
{R^{2\delta}  N^{\beta' - 2\alpha /\zeta}\over N^{2\beta'}}
e^{-\left(\beta' \zeta-2\alpha \right)\omega (N)}
\int_{R/2-\omega(N)}^R
\int_{0}^{R-t_w-2\omega(N)}
e^{(\beta'\zeta -\alpha)t_u} dt_u dt_w.
\end{split}
\]
Now, the behavior of the latter integral depends on the value of $\beta' \zeta /\alpha$.
If $\beta' \zeta /\alpha \leq 1$, then
\begin{equation*}
\phi_4 \lesssim R^{2\delta + 2} N^{-\beta' - 2\alpha / \zeta}
e^{-\left(\beta' \zeta-2\alpha \right)\omega (N)}.
\end{equation*}
However, for $\beta' \zeta /\alpha > 1$ we have
\begin{equation*}
\begin{split}
& \phi_4 \lesssim {R^{2\delta}  N^{-\beta' - 2\alpha /\zeta}}
e^{-\left(\beta' \zeta-2\alpha \right)\omega (N)}
\int_{R/2-\omega(N)}^R
e^{(\beta'\zeta -\alpha)(R-t_w -2\omega (N))}  dt_w \\
&=  {R^{2\delta}  N^{-\beta' - 2\alpha /\zeta} e^{(\beta'\zeta -\alpha) R}}
e^{-\left(3\beta' \zeta-4\alpha \right)\omega (N)}
\int_{R/2-\omega(N)}^R
e^{-(\beta'\zeta -\alpha) t_w}  dt_w \\
& \lesssim {R^{2\delta}  N^{-\beta' - 2\alpha /\zeta} e^{(\beta'\zeta -\alpha) R- (\beta'\zeta -\alpha) R/2 }}
e^{-\left(2\beta' \zeta-3\alpha \right)\omega (N)}  \\
&\asymp {R^{2\delta}  N^{-\beta' - 2\alpha /\zeta} e^{(\beta'\zeta -\alpha) R/2}}
e^{-\left(2\beta' \zeta-3\alpha \right)\omega (N)} \\
& \asymp {R^{2\delta}  N^{-\beta' - 2\alpha /\zeta + \beta' - \alpha/\zeta}}
e^{-\left(2\beta' \zeta-3\alpha \right)\omega (N)} =
{R^{2\delta}  N^{- 3\alpha /\zeta}}
e^{-\left(2\beta' \zeta-3\alpha \right)\omega (N)}.
\end{split}
\end{equation*}
Therefore,
\begin{equation} \label{eq:phi7}
\begin{split}
\phi_4  \lesssim
\begin{cases}
R^{2\delta + 2} N^{-\beta' - 2\alpha / \zeta}
e^{-\left(\beta' \zeta-2\alpha \right)\omega (N)}, & \mbox{if $\beta' \zeta/\alpha \leq 1$} \\
{R^{2\delta}  N^{- 3\alpha /\zeta}}
e^{-\left(3\beta' \zeta-4\alpha \right)\omega (N)} , & \mbox{if $\beta' \zeta/\alpha > 1$}
\end{cases}.
\end{split}
\end{equation}
Finally, we consider
\[
\phi_5 =\!
\int_{R/2-\omega(N)}^{R}\!\int_{R-t_w - 2\omega (N)}^{R/2-\omega(N)}\!
\int_{R-t_w - 2\omega (N)}^{R/2-\omega(N)}\!\! \left ( \frac{ R^{\delta}}{A_{u,v}}\right )^{\beta'}\!\!\!\!
e^{-\alpha(t_u+t_v+t_w)} dt_u dt_v dt_w
\]
The integral of the above expression is estimated as follows:
\begin{equation*}
\begin{split}
&  \int_{R/2-\omega(N)}^{R}\int_{R-t_w - 2\omega (N)}^{R/2-\omega(N)}
\int_{R-t_w - 2\omega (N)}^{R/2-\omega(N)} \left ( \frac{R^{\delta}}{A_{u,v}}\right )^{\beta'}
e^{-\alpha(t_u+t_v+t_w)} dt_u dt_v dt_w  \\
& \asymp {R^{\delta} \over N^{\beta'}}~\int_{R/2-\omega(N)}^{R}\int_{R-t_w - 2\omega (N)}^{R/2-\omega(N)}
\int_{R-t_w - 2\omega (N)}^{R/2-\omega(N)}
\!\!e^{{\beta'\zeta \over 2}\left( t_u + t_v \right)-\alpha \left(t_u+t_v+t_w \right)}
dt_u dt_v dt_w \\
&\lesssim
{R^{\delta}\over N^{\beta'}}~\int_{R/2-\omega(N)}^{R}
e^{-\alpha t_w + 2\left(\beta'\zeta /2 - \alpha \right)(R-t_w - 2 \omega (N))} dt_w \\
&= {R^{\delta}\, e^{2\left(\beta'\zeta /2 - \alpha \right)(R- 2 \omega (N))} \over N^{\beta'}}~
\int_{R/2-\omega(N)}^{R} e^{-(\beta' \zeta -  \alpha)t_w} dt_w \\
&= R^\delta {N^{2\beta' - 4\alpha /\zeta} \over N^{\beta'}} e^{-4(\beta' \zeta /2 - \alpha ) \omega (N)} 
\int_{R/2-\omega(N)}^{R} e^{-(\beta' \zeta -  \alpha)t_w} dt_w .
\end{split}
\end{equation*}
Hence, there are three cases according to the value of $\beta'\zeta /\alpha$, thus
obtaining
\begin{equation}  \label{var_phi3}
\phi_5  \lesssim \begin{cases}
R^\delta N^{-\beta'-2\alpha /\zeta}
e^{-4\left(\beta'\zeta /2 - \alpha \right) \omega (N)}, & \mbox{if $\beta'\zeta /\alpha < 1$ } \\
R^{\delta +1} N^{\beta' - 4\alpha /\zeta}
e^{-4(\beta' \zeta - 2\alpha)\omega (N)} & \\
\quad = R^{\delta +1} N^{- 3\alpha /\zeta}
e^{-4(\beta' \zeta - 2\alpha)\omega (N)}, & \mbox{if $\beta'\zeta /\alpha =1$} \\
R^\delta N^{-3\alpha /\zeta} e^{-(\beta' \zeta - 3\alpha)\omega (N)},
& \mbox{if $\beta'\zeta /\alpha > 1$}
\end{cases}.
\end{equation}

\section{Proof of Lemma~\ref{lem:relAngle}} \label{app:B}

We begin with the hyperbolic law of cosines: 
\begin{equation*} 
	\cosh (\zeta d(u,v)) = \cosh (\zeta (R- t_u )) \cosh (\zeta (R-t_v)) - \sinh (\zeta (R- t_u)) \sinh (\zeta (R-t_v)) 
\cos ( \theta_{u,v} ).
\end{equation*}

The right-hand side of the above becomes:
\begin{equation} \label{eq:coslaw} 
\begin{split} 
	&\cosh (\zeta (R- t_u) ) \cosh (\zeta (R-t_v)) - \sinh (\zeta (R- t_u )) \sinh (\zeta (R-t_v) ) \cos ( \theta_{u,v} ) = \\
	& {e^{\zeta (2R- (t_u + t_v))} \over 4} \left( \left(1+e^{-2\zeta (R-t_u)}\right)\left(1+e^{-2\zeta (R-t_v)}\right) 
      -\left(1-e^{-2\zeta (R-t_u)}\right)\left(1-e^{-2\zeta (R-t_v)}\right) \cos (\theta_{u,v}) \right) \\
	& = {e^{\zeta (2R- (t_u + t_v))} \over 4} \left(1- \cos (\theta_{u,v}) + \left(1+ \cos (\theta_{u,v}) \right) 
	\left( e^{-2\zeta (R-t_u)} +  e^{-2\zeta (R-t_v)}\right) + O\left( e^{-2\zeta (2R- (t_u + t_v))}\right)\right) 
\end{split}
\end{equation}
Therefore,
\begin{equation*}
	\begin{split}
    &\cosh (\zeta d(u,v)) \leq \\
	&  {e^{\zeta (2R- (t_u + t_v))} \over 4} \left(1- \cos (\theta_{u,v}) + 2 
	\left( e^{-2\zeta (R-t_u)} +  e^{-2\zeta (R-t_v)}\right)  + O\left( e^{-2\zeta (2R- (t_u + t_v))}\right)\right).
	\end{split}
\end{equation*}
Since $t_u+t_v < R-c_0$, the last error term is $O(N^{-4})$. 
Also, it is a basic trigonometric identity that $1- \cos (\theta_{u,v}) = 2\sin^2 \left( {\theta_{u,v} \over 2} \right)$. 
The latter is at most ${\theta_{u,v}^2 \over 2}$. 
 Therefore, the upper bound on $\theta_{u,v}$ yields:
  \begin{equation*}  
\begin{split}
      &\cosh (\zeta d(u,v)) \leq \\
      &{e^{\zeta (2R- (t_u + t_v))} \over 4} \left( {\theta_{u,v}^2 \over 2} + 2 
	\left( e^{-2\zeta (R-t_u)} +  e^{-2\zeta (R-t_v)}\right) + O\left( {1\over N^4}\right)\right) \\
	&\leq {e^{\zeta (2R- (t_u + t_v))} \over 4} \left( 2(1-\eps )^2 e^{\zeta (t_u+t_v -(1-\delta )R)} + 2 
	\left( e^{-2\zeta (R-t_u)} +  e^{-2\zeta (R-t_v)}\right) \right) + O\left( 1\right) \\
	&= (1-\eps)^2{e^{\zeta (1+\delta ) R} \over 2} + {1\over 2}\left(e^{\zeta ( t_u-t_v )} + e^{\zeta (t_v-t_u)} \right) + O(1)\\
	&< (1-\eps)^2{e^{\zeta (1+\delta) R} \over 2} + \eps {e^{\zeta (1+\delta) R} \over 2} + O(1) < {e^{\zeta (1+\delta) R} \over 2}, 
	\end{split}
 \end{equation*}  
for $N$ sufficiently large and $c_0$ such that $e^{-c_0}<\frac{1}{2}\eps$, since $t_u+t_v < (1- |\delta |) R-c_0$ and $t_u,t_v\geq0$.  
This implies that $t_u-t_v,t_v-t_u< (1+\delta ) R-c_0$ and, therefore, 
${1\over 2}\left(e^{\zeta (t_u-t_v)} + e^{\zeta (t_v-t_u)}\right)<{1\over 2}\left(e^{\zeta (1+\delta) R-c_0} + e^{\zeta 
(1+\delta) R-c_0}\right)<\eps{e^{\zeta (1+\delta) R}
\over 2}$. 
Also, since $\cosh (\zeta d(u,v))> {1\over 2} e^{\zeta d(u,v)}$, it follows that $d(u,v) < (1+\delta) R$. 

To deduce the second part of the lemma, we consider a lower bound on (\ref{eq:coslaw}) using the lower bound on $\theta_{u,v}$: 
\begin{equation} \label{eq:coslawLow} 
    \begin{split} 
    & \cosh (\zeta d(u,v))  \geq   {e^{\zeta (2R- (t_u + t_v))} \over 4} \left(1- \cos (\theta_{u,v}) \right) +O(1) \\
     & \geq {e^{\zeta (2R- (t_u + t_v))} \over 4} \left(1- \cos \left(2(1+\eps){e^{\frac{\zeta}{2} (t_u+t_v-(1-\delta )R)}}\right) \right)
+O(1). 
    \end{split}
\end{equation}
Using again that $1- \cos (\theta) = 2\sin^2 \left( {\theta  \over 2} \right)$ we deduce that 
 $$ 1- \cos \left( 2(1+\eps)e^{\frac{\zeta}{2}(t_u+t_v- (1-\delta) R)}\right)  = 2 \sin^2 \left( {1\over 2}~4(1+\eps)^2  
e^{\zeta (t_u+t_v-(1-\delta) R)} \right).$$
Since $t_u+t_v < (1-|\delta|) R - c_0$, it follows that $t_u+t_v- (1-\delta) R <- c_0$. 
So the latter is 
    \[ \begin{split}\sin & \left( {1\over 2}~4(1+\eps)^2  e^{\zeta(t_u+t_v- (1-\delta) R)}  \right)> 2\left(1+{\eps \over 2}\right)^2
e^{\zeta (t_u+t_v- (1-\delta) R)},
\end{split}\]
for $N$ and $c_0$ large enough, using the Taylor's expansion of the sine function around $0$.
Substituting this bound into (\ref{eq:coslawLow}) we have 
 $$ \cosh (\zeta d(u,v))  \geq \left(1+{\eps \over 2}\right)^2 {e^{\zeta (1+\delta) R} \over 2} + O(1).$$
Thus, if $d(u,v) \leq (1+\delta ) R$, the left-hand side would be smaller than the right-hand side which would lead to a contradiction.


\begin{thebibliography}{18}
\bibitem[Albert Barab\'asi]{ar:StatMechs} R. Albert and A.-L. Barab\'asi, Statistical mechanics of complex networks, \emph{Reviews of Modern Physics}
{\bf 74} (2002), 47--97.

\bibitem[Balogh et al.]{Balogh} J.~Balogh, B.~Bollob\'as, M.~Krivelevich, T.~M\"uller and M. Walters, Hamilton cycles in random geometric graphs,
\emph{Ann. Appl. Probab.} {\bf 21}(3) (2011), 1053--1072.

\bibitem[Barab\'asi Albert]{ar:BarAlb} A.-L. Barab\'asi and R. Albert,
Emergence of scaling in random networks, \emph{Science} {\bf 286} (1999), 509--512.

\bibitem[Barrat Weigt]{ar:BarrWeigt2000} A.~Barrat and M.~Weigt, On the properties of small-world network models, \emph{European Physical Journal B} {\bf 13} (3)(2000), 547-–560.


\bibitem[Bloznelis]{ar:Bloznelis2013} M.~Bloznelis, Degree and clustering coefficient in sparse random intersection graphs, \emph{Ann. Appl.
Probab.} {\bf 23}(3) (2013), 1254--1289. 

\bibitem[Bollob\'as]{Bollobas} B.~Bollob\'as, \emph{Random graphs}, Cambridge University Press, 2001, xviii+498 pages.

\bibitem[Bollob\'as Riordan]{ar:BolRiordan} B.~Bollob\'as and O.~Riordan, Mathematical results on scale-free random graphs, In \emph{Handbook of Graphs and Networks:
From the Genome to the Internet (S. Bornholdt, H.G. Schuster, Eds.)}, Wiley-VCH, Berlin, 2003, pp. ?? -- 134.

\bibitem[Bollob\'as Janson Riordan]{ar:BollJanRior}B.~Bollob\'as, S.~Janson and O.~Riordan, Sparse random graphs with clustering, \emph{Random Structures
Algorithms} {\bf 38} (2011), 269--323.
\bibitem[Boerner Maru Goldstone]{Boerner04+} K.~B\"orner, J.T.~Maru and R.L.~Goldstone, 
Colloquium Paper: Mapping Knowledge Domains: The simultaneous evolution of author and paper networks, 
\emph{Proc. Natl. Acad. Sci. USA} {\bf 101} (2004), 5266--5273.
\bibitem[Chung Lu]{ChungLu1+} F.~Chung and L.~Lu, The average distances in random graphs with given expected degrees, 
\emph{Proc. Natl. Acad. Sci. USA} {\bf 99} (2002), 15879--15882.

\bibitem[Chung Lu 1]{ChungLuComp+}  F.~Chung and L.~Lu, 
Connected components in random graphs with given expected degree sequences, \emph{Annals of Combinatorics} {\bf 6} (2002), 125--145.

\bibitem[Chung Lu Book]{ChungLuBook+} F.~Chung and L.~Lu, \emph{Complex Graphs and Networks}, AMS, 2006.

\bibitem[Coupechoux Lelarge]{ar:LelCoup} E.~Coupechoux and M.~Lelarge, How clustering affects epidemics in random networks,
In \emph{Proceedings of the 5th International Conference on Network Games, Control and Optimization, (NetGCooP 2011)}, Paris, France,
2011, pp. 1--7.

\bibitem[Dorogovtsev]{Dor} S.~N.~Dorogovtsev, \emph{Lectures on Complex Networks}, Oxford University Press, 2010, xi+134 pages. 

\bibitem[Eggemann Noble]{ar:EggNoble} N.~Eggemann and S.D.~Noble, The clustering coefficient of a scale-free random graph, \emph{Discrete Applied Mathematics}
{\bf 159}(10) (2011), 953--965.

\bibitem[Foudalis et al.]{FJPS11} 
I.~Foudalis, K.~Jain, C.~H.~Papadimitriou and M.~Sideri, Modeling social networks through user background and 
behavior, In \emph{Proceedings of the 8th International Workshop on Algorithms and Models for the Web Graph (WAW 2011)} (A.~Frieze, 
P.~Horn and P.~Pra{\l}at, Eds.), Lecture Notes on Computer Science 6732, 2011, pp. 85--102.  

\bibitem[Fountoulakis]{ar:Foun13+} N.~Fountoulakis, On a geometrization of the Chung-Lu model for complex networks,~\emph{Journal of
Complex Networks}, to appear.

\bibitem[Gilbert]{Gilbert61} E.~N.~Gilbert, Random plane networks, \emph{J. Soc. Indust. Appl. Math.} {\bf 9} (1961), 533--543.

\bibitem[Gugelmann Panagiotou Peter]{ar:Kosta} L.~Gugelmann, K.~Panagiotou and U.~Peter, Random hyperbolic graphs: degree sequence and clustering,
In \emph{Proceedings of the 39th International Colloquium on Automata, Languages and Programming (A.~ Czumaj et al. Eds.)},
Lecture Notes in Computer Science 7392, pp. 573--585.

\bibitem[Hafner]{ar:Hafner72} R.~Hafner, The asymptotic distribution of random clumps, \emph{Computing} {\bf 10} (1972), 335--351.

\bibitem[Janson\L uczakRuci\'nski]{JLR} S.~Janson, T.~\L uczak and A.~Ruci\'nski, \emph{Random graphs}, Wiley-Interscience, 2001, pages.

\bibitem[Krioukov et al.]{ar:Krioukov} D.~Krioukov, F.~Papadopoulos, M.~Kitsak, A.~Vahdat and M.~Bogu\~n\'a, Hyperbolic Geometry of Complex Networks,
\emph{Phys. Rev. E} {\bf 82} (2010), 036106.

\bibitem[Luce Perry]{ar:LuPer49} R.~D.~Luce and A.~D.~Perry, A method of matrix analysis of group structure, \emph{Psychometrika} 
{\bf 14} (1) (1949), 95--116.

\bibitem[Meester Roy]{bk:MeesterRoy} R.~Meester and R.~Roy, \emph{Continuum percolation}, Cambridge University Press, 1996, x+238 pages. 

\bibitem[Menczer]{Mencezer02+} F.~Menczer, Growing and navigating the small world Web by local content, 
\emph{Proc. Natl. Acad. Sci. USA} {\bf 99} (2002), 14014--14019. 

\bibitem[McDiarmid Mueller]{McDiarmid} C.~McDiarmid and T.~M\"uller, On the chromatic number of random geometric graphs, \emph{Combinatorica}
 {\bf 31}(4) (2011), 423--488.

\bibitem[Moerters Jacob]{MoertJac15} P.~M\"orters and E.~Jacob, Spatial preferential attachment networks: Power laws and clustering coefficients, 
\emph{Ann. Appl. Probab.} {\bf 25} (2015), 632--662. 

\bibitem[Newman Strogatz Watts]{ar:NewStroWat2001} M.~E.~Newman, S.~H.~Strogatz and D.~J.~Watts. Random graphs with arbitrary
degree distributions and their applications, \emph{Phys. Rev. E} {\bf 64} (2001), 026118.
\bibitem[Park Newman]{ar:Park} J.~Park and M.~E.~J.~Newman, Statistical mechanics of networks, \emph{Phys. Rev. E} {\bf 70} (2004), 066117.

\bibitem[Penrose]{bk:Penrose} M.~Penrose, \emph{Random Geometric Graphs}, Oxford University Press, 2003.

\bibitem[Watts Strogatz]{ar:WatStrog98} D.~J.~Watts and S.~H.~Strogatz, Collective dynamics of ``small-world" networks, \emph{Nature} {\bf 393} (1998),
440--442.
\end{thebibliography}
\end{document}